\newtheorem{theorem}{Theorem}[section]
\newtheorem{lemma}[theorem]{Lemma}
\newtheorem{corollary}[theorem]{Corollary}
\newtheorem{remark}[theorem]{Remark}
\newtheorem{proposition}{Proposition}[section]
\newcommand{\Nm}{\mathbb{N}}
\newcommand{\Rm}{\mathbb{R}}
\newcommand{\Zm}{\mathbb{Z}}
\numberwithin{equation}{section}
\begin{document} 

\title[Subsequence Rational Ergodicity of Rank-Ones]{Subsequence Rational Ergodicity of Rank-One Transformations}

\author[Bozgan]{Francisc Bozgan}
\address[Francisc Bozgan]{University of California, Los Angeles, CA 90095, US }
\email {Francisc Bozgan <franciscbozgan@gmail.com>}

\author[Sanchez]{Anthony Sanchez}
\address[Anthony Sanchez]{Arizona State University \\
Tempe, AZ 85287, USA}
\email{Anthony Sanchez <anthony.sanchez.1@asu.edu>}

\author[Silva]{Cesar E. Silva}
\address[Cesar E. Silva]{Department of Mathematics\\
     Williams College, Williamstown, MA 01267, USA}
\email{Cesar E. Silva <csilva@williams.edu>}

\author[Stevens]{David Stevens}
\address[David Stevens]{Williams College, Williamstown, MA 01267, USA}
\email{David Stevens <David.F.Stevens@williams.edu>}

\author[Wang]{Jane Wang}
\address[Jane Wang] {Princeton University \\
Princeton, NJ 08544, USA}
\email{Jane Wang <jywang@princeton.edu>}

\subjclass[2000]{Primary 37A40; Secondary
37A05} 
\keywords{Infinite measure-preserving, ergodic, rationally ergodic, rank-one}

\maketitle 

\begin{abstract}  We show that all rank-one transformations are subsequence boundedly rationally ergodic and  that there exist rank-one transformations that are not weakly rationally ergodic. 
\end{abstract}

%Section1%%%%%%%%%%%%%%%%%%%%%%%%%%%%%%%%%%%%%%%%

\section{Preliminaries}

We consider standard Borel measure spaces, denoted $(X,\mathcal{B},\mu)$, where $\mu$ is   a nonatomic $\sigma$-finite measure. We are interested in the case when $\mu$ is infinite. We study invertible measure-preserving transformations $T: X \rightarrow X$, i.e., $T$ is invertible mod $\mu$, $T^{-1}A$ is measurable if and only if $A$ is, and    $\mu(A)=\mu(T^{-1}A)$ for all $A\in\mathcal B$. A set $A$ is invariant if $T^{-1}A=A $ (all our equalities are mod $\mu$). A transformation $T$ is {\bf ergodic} if for every  invariant set $A$, $\mu(A)=0$ or  $\mu(X\setminus A)=0$. A transformation $T$ is {\bf conservative} if for every measurable set $A$ of positive measure, there exists   $n\in\mathbb N$ such that $\mu(A \cap T^{-n}A)>0$. One can show  that $T$ is conservative and ergodic if and only if  every set $A$ of positive measure sweeps out: $\bigcup_{n=0}^\infty T^{-n}A=X$. 

It is well known that when $T$ is ergodic and finite measure-preserving, the ergodic theorem gives a quantitative estimate for the average number of visits to a measurable set  for almost every point; for example, 
the law of large numbers can be obtained as a consequence of the ergodic theorem. In infinite measure, however, the averages given by the ergodic theorem (of visits to a finite measure set) converge to $0$, and Aaronson has shown that there is no normalizing sequence of  constants for which the ergodic averages (using this sequence) of visits to a set converge to the measure of the set, see \cite[2.4.1]{Aa97}. In \cite{Aa77}, Aaronson defined the notions of weak rational ergodicity and rational ergodicity as giving quantitative estimates for ergodic averages for sets satisfying certain conditions, and extended them to bounded rational ergodicity in \cite{Aa79}. More recently, in \cite{Aa12}, he defined a notion called rational weak mixing that is stronger than weak rational ergodicity.

This paper is organized in three parts.  In the first part we prove that rank-one infinite measure-preserving transformations are subsequence boundedly rationally ergodic, a notion where the bounded rational ergodicity condition is satisfied only for a subsequence. A consequence is that all rank-one transformations are not squashable, thus Maharam transformations are not rank-one.  In the course of doing this we prove that rank-one transformations with a bounded sequence of cuts satisfy the stronger property of being boundedly rationally ergodic.   Our work builds  on the paper by   Dai,
Garcia, P\u{a}durariu and Silva \cite{DGPS}, where   these properties are proved for a large class of  rank-one transformations satisfying a condition  called exponential growth. 
We extend the techniques of \cite{DGPS} and remove these assumptions, so that we obtain the corresponding results for all
rank-one transformations and for all rank-one transformations with bounded cuts. 
For the proofs we proceed by first showing the perhaps more intuitive notions of weak rational ergodicity and subsequence weak rational ergodicity, and then extend them to the more general case.
Recently, after most of our work was completed, we learned of the paper of Aaronson, Kosloff and Weiss
where they prove that rank-one transformations with a bounded sequence of cuts satisfy a property
that implies bounded rational ergodicty \cite{AKW13}.

We also prove that there exist rank-one transformations that are not weakly rationally ergodic,
so not rationally ergodic (the first examples of non-rationally ergodic transformations were
Maharam transformations \cite{Aa77}).
Rank-one transformations have been studied extensively in ergodic theory and are a source
of important examples and counterexamples (see, e.g. \cite{Fe97} and \cite{DS09} for the 
finite measure-preserving and infinite measure-preserving cases, respectively). It is well known that in the finite measure-preserving case, rank-one transformations are generic under the weak (also called coarse) topology on the group of invertible transformations on a standard space. While it is expected that this result is also true in infinite measure, we have not found a reference with a proof so we have included a proof of this fact in Section \ref{sec:gen}.   For the genericity result, rather than working with the cutting and stacking construction definition of rank-one transformations (used in examples and in e.g. \cite{DGPS}), we work with the abstract definition of rank-one. As  Aaronson \cite[\textsection 10]{Aa12} has shown that the class of weakly  rationally ergodic transformations is meager in the group of measure-preserving transformations, a consequence of our results is that there exist rank-one transformations that are subsequence boundedly rationally ergodic but not weakly rationally ergodic. Rank-one examples were not known earlier and while we know existence we do not know an explicit construction. 

In the second part (Section \ref{sec:ztmr}) we study the properties of zero type and multiple recurrence. 
Zero type is an interesting property defined by Hajian and Kakutani whose spectral definition in the case of finite measure turns out to be equivalent to mixing. It is known not to be generic.
As is well known, Furstenberg \cite{Fu81} proved that all finite measure-preserving transformations satisfy the strong property of multiple recurrence. While it has been known for some time that this is not the case in infinite measure (see e.g.  \cite{DS09} for a discussion of these results), it is of interest to investigate the relationship of this with other properties. We prove the independence of multiple recurrence with zero type in rank-one.

Finally, in Section \ref{sec:rwm} we study the more recent property of rational weak mixing
and give conditions for rank-one transformations not to be rational weak mixing.

\subsection{Acknowledgements}
This paper was based on research done by the Ergodic Theory group of the 2013 SMALL Undergraduate Research Project at Williams College. Support for this project was provided by the National Science Foundation REU Grant DMS - 0353634 and the Bronfman Science Center of Williams College.  We would like to thank Alexandre Danilenko and  Andr\'es del Junco for useful comments.

\subsection{Notions of Rational Ergodicity}

Given an invertible, measure-preserving transformation $T$ and a set $F \subset X$ of positive finite measure, define the \textbf{intrinsic weight sequence} of $F$ to be $$u_n(F) = \frac{\mu(F \cap T^kF)}{\mu(F)^2}.$$ For the sum of these weights up to $n$ write  $$a_n(F) = \sum_{k=0}^{n-1} u_k(F).$$ A set $F$ is said to \textbf{sweep out} if $\mu \left( X \backslash \bigcup_{i=0}^\infty T^{i} F\right) = 0$. 
By Maharam's theorem, if there is a sweep out set of finite measure the transformation is conservative. A transformation $T$ is said to be \textbf{weakly rationally ergodic} \cite {Aa77} if there exists a set $F \subset X$ of positive finite measure that sweeps out such that for all measurable $A,B \subset F$, 
\begin{equation}\label{eqn:wre} 
\frac{1}{a_n(F)} \sum_{k=0}^{n-1} \mu(A \cap T^k B) \to \mu(A)\mu(B)
\end{equation}
as $n \rightarrow \infty$. 
If $T$ is weakly rationally ergodic it is conservative ergodic; furthermore  any set $T^i(F)$ will work in place of $F$.
Additionally,  $T$ is \textbf{rationally weakly mixing} \cite{Aa12} if there exists a sweep out 
set 
  $F \subset X$ of finite measure such that for all measurable $A, B \subset F$, we have that 
\begin{equation}
\label{eqn:rwm}
\frac{1}{a_n(F)} \sum_{k=0}^{n-1} |\mu(A \cap T^k B) - \mu(A)\mu(B) u_k(F)| \rightarrow 0.
\end{equation}

If the limit \eqref{eqn:wre} only holds along a subsequence $\{n_i\} \subset \Nm$, then we say that the transformation $T$ is \textbf{subsequence weakly rationally ergodic} along $\{n_i\}$ and if \eqref{eqn:rwm} holds along a subsequence, then we say that $T$ is \textbf{subsequence rationally weak mixing} along $\{n_i\}$. Rational weak mixing along a sequence implies weak rational ergodicity 
along the same sequence \cite{Aa12}.

%We say that a transformation is \textbf{rank-one} if it can be constructed by cutting and stacking with one column in each stage of the construction.

\subsection{Rank-One Transformations} Let $T$ be an invertible measure-preserving transformation
on $X$. 
A    \textbf{Rokhlin column}, or \textbf{column}, is a collection of pairwise disjoint measurable sets $B, T(B), \ldots, T^{h-1} (B)$. We call any single such set in the column a \textbf{level}, and  $h$  the height of the column. In a column we imagine the levels being stacked on top of each other, with $B$ at the bottom and $T^{h-1}(B)$ at the top.

An invertible measure-preserving transformation $T$ is said to be {\bf rank-one} if there exits a
sequence of Rokhlin columns $C_n=\{B_n,$ $\ldots,$ $T^{h_n-1}(B_n)\}$ such  that for any measurable set $A \subset X$ of finite measure and $\varepsilon > 0$, there exists an $N$ such that for all $n \geq N$, we have that $\mu(A \triangle B_n^\prime) < \varepsilon$ for some $B_n^\prime$ a union of levels of $C_n$. 
It follows that such a $T$ is conservative ergodic.

It can be shown that every rank-one transformation can be constructed so that the 
sequence of columns $\{C_n\}$ is refining in the sense that every level of $C_n$ 
is a union of levels in the previous column $C_{n-1}$ (this was shown in the finite measure-peserving case in  \cite[Lemma 9]{B71} and the same ideas work in the infinite case--Proposition \ref{prop:refining}). Thus, we may assume that $B_{n-1} = \bigcup_{i \in E} T^i (B_n)$ for some $E \subset \{0, 1, \ldots, h_n-1\}$.

\subsection{Rank-Ones as Cutting and Stacking}

When constructing rank-one examples it is useful to think of a process resembling cutting and stacking. Our first column $C_0$ consists of a single measurable set of positive finite measure. 

 In each step, given $C_n$, we cut the column into $r_n$ subcolumns, where $r_n \geq 2$. That is, we divide $B_n$, the base of column $C_n$, into $r_n$ sets of equal measure. If we label these sets as $B_{n,0}, B_{n,1}, \ldots, B_{n,r_n-1}$, then our first subcolumn would be $B_{n,0}, T(B_{n,0}), \ldots, T^{h_n-1}(B_{n,0})$ and our other $r_n-1$ subcolumns would be defined similarly. Above any subcolumn, we may add any number of new levels, called \textbf{spacers}, under the condition that these new levels are also pairwise disjoint. Then, $C_{n+1}$ is constructed by stacking each subcolumn with its associated spacers under the next subcolumn. If $A$ is a level below a level $B$ in $C_{n+1}$, then we must have that $T(A) = B$, $\mu(A) = \mu(B)$, and $T$ is invertible on $A$. Thus, $C_{n+1}$ will consist of $r_n$ copies of $C_n$ possibly separated by spacers. 

Since all rank-one transformations may be constructed in this way, we spend some time developing the notation that we will use to refer to these constructions throughout the rest of the paper. 

Suppose that $T$ is a rank-one transformation. Given a column $C_n$ of $T$, we let $h_n$ be the height of the column, $w_n$ be the width (the measure of each level), and $r_n$ be the number of subcolumns that $C_n$ is cut into. Given a level $J$ in $C_n$, we denote the height of $J$ in $C_n$ by $h(J)$. If we fix $J$ a level in $C_n$, and an $m \geq n$, we define the \textbf{descendants} of $J$ in $C_m$ to be the set of levels in $C_m$ whose disjoint union is $C_n$. We then define $D(J,m)$ as the set of the heights of these levels. 

To form $C_{n+1}$ from $C_n$, we first cut $C_n$ into $r_n$ subcolumns, which we denote by $C_n[0], C_n[1], \ldots, C_n[r_n-1]$. Then, before stacking, we add $s_{n,k}$ spacers above each $C_n[k]$, $0 \leq k \leq r_n-1$, where each $s_{n,k}$ is in $\Zm_{\geq 0}$. 

For full generality here, we should allow for the possibility of spacers beneath the first subcolumn $C_{n}[0]$. However, one can see that this   is not necessary as any spacers placed under the first subcolumn $C_{n}[0]$ can be added as spacers above other subcolumns in later columns $C_m$, $m \geq n$. Therefore, we do not need spacers below the first, or any, subcolumn.

Then define $h_{n,k} = h_n + s_{n,k}$ for each $k$. If we let $$H_n = \{0\} \cup \left\{\sum_{k=0}^{i} h_{n,k} : 0 \leq i < r_n -1\right\},$$ we have that, for $J$ in $C_j$ and $N\geq j$,
\[D(J,N) = h(J) + H_j \oplus H_{j+1} \oplus \cdots \oplus H_{N-1}.\]
With this notation we can easily find the number of elements in $D(J,N)$ to be $|D(J,N)| = |H_j| \cdot |H_{j+1}| \cdots |H_{N-1}| = r_j \cdot r_{j+1} \cdots r_{N-1}$. For a level $J$ in $C_n$ and $m \geq n$, we define the maximum height of its descendants in $C_m$ to be $M_m = \max\{D(J, m)\}$.

\section{Weak Rational Ergodicity}

In this section we prove that all rank-one transformations are subsequence weakly rationally ergodic, and that all rank-one transformations with a bounded number of cuts are weakly rationally ergodic. 

We begin with a technical lemma that is used to estimate the    $\mu(J \cap T^k J)$,
for any level $J$, using knowledge of the descendants of $J$. We first state the lemma proved 
by Dai et. al. 
in \cite{DGPS}, as it will also be needed in a later section. This lemma was proved under the assumption of normality. 
We remove this assumption in Lemma~\ref{lem:approx}, but replace the equalities in the conclusion of
Lemma~\ref{lem:normal} with 
  $\varepsilon > 0$ estimates.
  A rank-one transformation is said to be  {\bf normal} 
if $s_{n, r_n-1} > 0$ for infinitely many values of $n$, i.e.,  at least one spacer is added above the rightmost subcolumn infinitely many times. 

\begin{lemma}  \cite[Lemma 2.1]{DGPS}
\label{lem:normal}  
Let $T$ be a normal, rank-one transformation, $J$ be a level, and $n \in \Nm$. Then, for every $N$ sufficiently large, we have $$\mu(J \cap T^k J) = w_N \cdot |D(J, N) \cap (k + D(J,N))|$$ for all $0 \leq k < n$. Consequently, $$\sum_{k=0}^{n-1} \mu(J \cap T^k J) = w_N \cdot \sum_{k=0}^{n-1} |D(J, N) \cap (k + D(J,N))|.$$
\end{lemma}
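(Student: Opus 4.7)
The plan is to compute $\mu(J \cap T^k J)$ by decomposing $J$ into its descendant levels at stage $N$ and tracking the image of each under $T^k$. Writing $J = \bigsqcup_{h \in D(J, N)} L_h$, where $L_h = T^h(B_N)$ is the level of $C_N$ at height $h$ (with $\mu(L_h) = w_N$), one has
\[ \mu(J \cap T^k J) = \sum_{h, h' \in D(J, N)} \mu(L_h \cap T^k L_{h'}), \]
and the aim is to show that only the diagonal pairs $(h,h') = (h'+k, h')$ contribute, each with measure $w_N$.

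The bulk of the contribution comes from $h' \in D(J, N)$ with $h' + k < h_N$, where the orbit stays inside $C_N$. Here $T^k L_{h'} = L_{h'+k}$ is itself a level of $C_N$, and by disjointness of the levels, $\mu(L_h \cap L_{h'+k}) = w_N \cdot \mathbf{1}[h = h'+k]$. Summing over such pairs produces exactly the target $w_N \cdot |D(J,N) \cap (k + D(J,N))|$.

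The main obstacle, and the step requiring normality, is ruling out spurious contributions from $h' + k \geq h_N$, when $T^k L_{h'}$ exits the top of $C_N$ and could in principle wrap back into another copy of $C_N$ sitting inside $C_{N+1}$ (or further up). I would work inside $C_{N+1}$: decompose $L_{h'} = \bigsqcup_{i=0}^{r_N - 1} T^{h'}(B_{N,i})$ into its $r_N$ thinner sub-pieces indexed by subcolumn, compute the $C_{N+1}$-height to which $T^k$ sends each piece, and check whether the resulting height lies in $D(J, N+1)$. For an interior subcolumn $i < r_N - 1$, a stray match would force a specific difference $h_N + s_{N,i} - k$ to be realized between two elements of $D(J, N)$, which becomes impossible for $N$ large because $\max D(J, N) < h_N$ and $k$ is fixed while $h_N \to \infty$. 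For the rightmost subcolumn $i = r_N - 1$, normality is used directly: choosing $N$ from the infinitely many stages with $s_{N, r_N - 1} > 0$ guarantees that the top-of-column image lands in the fresh spacer region of $C_{N+1}$, which is disjoint from $J$.

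Combining the two cases yields the pointwise identity, and summing over $0 \leq k < n$ produces the second formula. The technical heart is the combinatorial bookkeeping of subcolumn wrap-around inside $C_{N+1}$; normality is precisely what neutralizes the potentially dangerous boundary contribution at the top of $C_N$.
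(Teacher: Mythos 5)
The paper does not reprove this lemma (it is quoted verbatim from \cite{DGPS}), so your attempt stands on its own. Your skeleton is right: decompose $J$ into its $C_N$-descendants $L_{h}$, $h\in D(J,N)$, observe that $T^kL_{h'}=L_{h'+k}$ whenever $h'+k<h_N$, and then show the wrap-around terms vanish. But the wrap-around analysis is where normality actually does its work, and your two cases are not justified as written. For an interior subcolumn, a stray match requires the difference $h_N+s_{N,i}-k$ to be realized inside $D(J,N)-D(J,N)$; the largest such difference is $\max D(J,N)-h(J)$, so what you need is $h_N-\max D(J,N)>k-s_{N,i}+h(J)$, and ``$\max D(J,N)<h_N$ while $h_N\to\infty$'' does not give this --- both $h_N$ and $\max D(J,N)$ tend to infinity and their difference can stay bounded (e.g.\ $h_N-\max D(I,N)\equiv 1$ for a construction with no spacers over the rightmost subcolumn), which is precisely the non-normal situation the lemma excludes. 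For the rightmost subcolumn, normality only supplies $s_{N,r_N-1}\geq 1$ at infinitely many stages, whereas the overshoot $h'+k-h_N$ can be as large as $n-2$; a single spacer does not force the image to land in the spacer region, and once it exits the top of $C_{N+1}$ you are back to the same wrap-around problem one column up.

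The missing observation is that normality should be used \emph{globally}, not subcolumn by subcolumn. Writing $M_N=\max D(J,N)$ for $J$ a level of $C_j$, one computes
\begin{equation*}
h_N-M_N \;=\; \bigl(h_j-h(J)\bigr)+\sum_{m=j}^{N-1}s_{m,r_m-1},
\end{equation*}
and normality makes the sum diverge, so $h_N-M_N\to\infty$. Hence for $N$ large enough that $h_N-M_N>n$, every descendant of $J$ in $C_N$ lies more than $n$ levels below the top of $C_N$, the case $h'+k\geq h_N$ never occurs for $0\leq k<n$, and your ``bulk'' computation already gives the exact identity with no boundary correction. This single estimate both repairs your interior-subcolumn case and makes the rightmost-subcolumn case (and the entire excursion into $C_{N+1}$) unnecessary; it is also consistent with the paper's Lemma~\ref{lem:approx}, which shows that without normality one can only salvage an $\varepsilon$-approximate version of the statement.
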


We now prove the following without assuming normality.

\begin{lemma} 
\label{lem:approx}
Let $T$ be a rank-one transformation. Fix $\varepsilon > 0$, $J$ a level, and $n \in \Nm$. Then, we can find an $N \in \Nm$ such that for all $m \geq N$, we have $$\left|\mu(J \cap T^k J) - w_m \cdot |D(J,m) \cap (k + D(J,m))| \right| < \varepsilon$$ for all $0 \leq k < n.$ Furthermore, we can find an $M \in \Nm$ such that for all $m \geq M$, $$\left| \sum_{k=0}^{n-1} \mu(J \cap T^k J) - w_m \cdot \sum_{k=0}^{n-1} |D(J,m) \cap (k + D(J,m))|\right| < \varepsilon.$$
\end{lemma}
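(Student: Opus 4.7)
The plan is to decompose $\mu(J \cap T^k J)$ into contributions from pairs of descendants of $J$ in a sufficiently fine column $C_m$, isolate the expected main term, and bound the residual error by $k w_m$, which tends to zero as $m \to \infty$ since $w_m$ does.

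I would start by fixing a level $J \subset C_j$ and any $m \geq j$. The refining property gives
\[
J = \bigsqcup_{i \in D(J, m)} T^i(B_m),
\]
and hence
\[
J \cap T^k J = \bigsqcup_{i, i' \in D(J, m)} T^i(B_m) \cap T^{k + i'}(B_m).
\]
Measure-preservation yields $\mu(T^i(B_m) \cap T^{k + i'}(B_m)) = \mu(B_m \cap T^\ell(B_m))$ with $\ell = k + i' - i$. Three cases exhaust the possibilities: if $\ell = 0$ the intersection is $B_m$ itself, of measure $w_m$; if $0 < |\ell| < h_m$ the set $T^\ell(B_m)$ is a level of $C_m$ distinct from $B_m$, so the intersection has measure zero; and if $\ell \geq h_m$ the measure is at most $w_m$. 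The remaining possibility $\ell \leq -h_m$ cannot occur, because $k, i' \geq 0$ and $i \leq h_m - 1$ force $\ell > -h_m$. The $\ell = 0$ pairs correspond exactly to $i = k + i'$ with $i, i' \in D(J, m)$, and their total contribution is precisely the main term $w_m |D(J, m) \cap (k + D(J, m))|$.

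For the error ($\ell \geq h_m$), I would group by the second index $i'$, noting that $k + i' \geq h_m$ forces $i' \in [h_m - k, h_m - 1] \cap \Zm$, an integer interval of size at most $k$. For any such fixed $i'$, the disjointness of the levels $T^i(B_m)$ making up $J$ gives
\[
\sum_{i \in D(J, m)} \mu\bigl(T^i(B_m) \cap T^{k + i'}(B_m)\bigr) \leq \mu\bigl(T^{k + i'}(B_m)\bigr) = w_m,
\]
so the total error for a single $k$ is at most $k w_m \leq n w_m$. Since $|D(J, m)| = w_j / w_m$ and each $r_i \geq 2$ forces $|D(J, m)| \to \infty$, we have $w_m \to 0$; thus choosing $N$ with $n w_m < \varepsilon$ for $m \geq N$ establishes the first estimate, and choosing $M$ with $n^2 w_m / 2 < \varepsilon$ handles the summed estimate via the triangle inequality. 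The conceptual point that distinguishes this from the proof of Lemma~\ref{lem:normal} is that without normality the gap $h_m - \max D(J, m)$ stays bounded, so pairs with $k + i' \geq h_m$ genuinely exist; the saving grace is that they number at most $k$ and each contributes at most $w_m$, which is exactly the mild cost of replacing equality by an $\varepsilon$-approximation.
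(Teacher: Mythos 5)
Your proof is correct and takes essentially the same approach as the paper's: both isolate the main term $w_m\,|D(J,m)\cap(k+D(J,m))|$ as the contribution of matched descendant pairs and observe that the only error comes from descendants within $k$ of the top of $C_m$, contributing at most $k\,w_m < n\,w_m < \varepsilon$, after which $w_m\to 0$ finishes the argument. Your explicit pairwise decomposition is, if anything, a little more careful than the paper's version, which truncates $J$ to the set $\hat J$ of descendants below the top $k$ levels and asserts exact equality there.
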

\begin{proof} For $J$ a level, and $n$ fixed, we can find an $N$ such that for all $m \geq N$, we have that $w_m \cdot n < \varepsilon$. For all but the top $k$ levels of $C_m,$ the image $T^k$ of each level is still a level in $C_m$. Thus, the image of at most $k<n$ descendants $J_i \in D(J,N)$ under the transformation $T^k$ are not levels in $C_m$. But then, if $\hat J$ is the union of the descendants of $J$ in $C_m$ not in the top $k$ levels of $C_m$, we have that 
\begin{align*} 
\big|\mu(J & \cap T^k J)  - w_m \cdot |D(J,m) \cap (k + D(J,m))| \big| \\ & \leq  \big|\mu(\hat J \cap T^k \hat J) - w_m \cdot |D(J,m) \cap (k + D(J,m))| \big| + \mu(J \backslash \hat J) \\
& \leq  0 + w_m \cdot k  \\
& <  \varepsilon.
\end{align*} 
By choosing $M = \max \{N_k\}_{k=0}^{n-1}$ where each $N_k$ is such that $$\left|\mu(J \cap T^k J) - w_m \cdot |D(J,m) \cap (k + D(J,m))| \right| < \varepsilon/2^k$$ for all $m \geq M$, we have that the second conclusion holds as well. 
\end{proof}

The first step in our proof is to  show that all rank-one transformations satisfy condition \eqref{eqn:wre} on finite unions of levels when $F = I$, the level in $C_0$. For this we use the following lemma, whose proof is contained in the proof of Theorem 2.2 in  \cite{DGPS}.

\begin{lemma}  \cite[2.2]{DGPS}
\label{lem:pn}
Fix $n \in \Nm$, $N$ sufficiently large as a function of $n$, and $J$ the bottom level of some column $C_j$, and define $P(n) = |U|^2 \sum_{k=0}^{n-1} |V \cap (k + V)|$ where $U = H_0 \oplus H_1 \oplus \cdots \oplus H_{j-1}$ and $V = D(J,N) = H_j \oplus H_{j+1} \oplus \cdots \oplus H_{N-1}$. Then, if $M$ is the maximum value of $U - U$, we have that $$\sum_{k=M}^{n-1-M}|(U \oplus V) \cap (k + U \oplus V)| \leq P(n) \leq \sum_{k=-M}^{n-1+M}|(U \oplus V) \cap (k + U \oplus V)|.$$ 
\end{lemma}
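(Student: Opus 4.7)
The plan is to use the direct-sum structure of $U \oplus V$ to rewrite $|(U \oplus V) \cap (k + (U \oplus V))|$ as a weighted sum of the single-scale quantities $|V \cap (k' + V)|$, and then to exploit the fact that the weights are supported on integers of absolute value at most $M$, so that shifting the range of summation by any such amount changes things only inside a margin of length $M$ at each end.

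First I would expand the intersection combinatorially. Because every element of $U \oplus V$ has a unique representation $u + v$ with $u \in U$, $v \in V$, an element of $(U \oplus V) \cap (k + (U \oplus V))$ corresponds to a unique tuple $(u, v, u', v')$ with $u + v = k + u' + v'$, equivalently $v - v' = k + u' - u$. Grouping the count by the value of $d = u' - u$ gives
\begin{equation*}
|(U \oplus V) \cap (k + (U \oplus V))| = \sum_{u, u' \in U} |V \cap ((k + u' - u) + V)| = \sum_{d \in U - U} n_d\, |V \cap ((k + d) + V)|,
\end{equation*}
where $n_d = |\{(u, u') \in U^2 : u' - u = d\}|$ and $\sum_d n_d = |U|^2$.

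Next I would substitute this expansion into each of the two outer sums in the statement, reverse the order of summation, and change variable $k' = k + d$ in the inner sum. Because $0 \in H_i$ for every $i$, one has $0 \in U$ and $M = \max U = \max(U - U)$, so $U - U \subseteq [-M, M]$. For each $d \in U - U$ this forces
\begin{equation*}
[M + d,\, n - 1 - M + d] \subseteq [0,\, n - 1] \subseteq [-M + d,\, n - 1 + M + d].
\end{equation*}
Non-negativity of each $|V \cap (k' + V)|$ then converts these set inclusions into term-by-term inequalities between the shifted sums and $\sum_{k'=0}^{n-1}|V \cap (k' + V)|$. Multiplying by the non-negative weight $n_d$ and summing over $d$ produces both the lower bound $\sum_{k=M}^{n-1-M}|(U \oplus V) \cap (k + U \oplus V)| \leq P(n)$ and the upper bound $P(n) \leq \sum_{k=-M}^{n-1+M}|(U \oplus V) \cap (k + U \oplus V)|$ simultaneously.

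The only step that is not formal manipulation is the direct-sum expansion in the first display, which relies on the uniqueness of the representation $x = u + v$ encoded by the $\oplus$ notation. I do not expect any real obstacle: once that expansion is in hand, the lemma reduces to comparing sums over integer intervals that differ from $[0, n-1]$ only in a margin of width $M$ at each end.
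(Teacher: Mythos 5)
Your argument is correct: the convolution-style expansion $|(U \oplus V) \cap (k + U \oplus V)| = \sum_{d \in U-U} n_d\,|V \cap ((k+d)+V)|$ is justified by the unique-representation property of $\oplus$, and since $U-U \subseteq [-M,M]$ the shifted index ranges nest inside (resp.\ contain) $[0,n-1]$, so non-negativity of the terms gives both bounds. The paper itself offers no proof of this lemma, deferring entirely to the proof of Theorem 2.2 in \cite{DGPS}, and your counting of representations $u+v = k+u'+v'$ followed by the boundary-margin comparison is exactly the standard argument one expects there, so there is nothing to flag.
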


The following theorem lifts the restriction of normality in  
 \cite[Theorem 2.2]{DGPS}.

\begin{theorem}\label{thm:levels} Let $T$ be a rank-one transformation. Then $T$ satisfies condition \eqref{eqn:wre} for $A$ and $B$, finite unions of levels, and $F = I$, the level in $C_0$. \end{theorem}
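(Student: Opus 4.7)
The plan is to follow the outline of Theorem 2.2 in \cite{DGPS}, but to invoke the approximate descendant identity Lemma \ref{lem:approx} instead of the exact Lemma \ref{lem:normal}, thus removing the normality hypothesis at the cost of turning several equalities into $\varepsilon$-estimates that must be shown to be absorbed by the growing denominator $a_n(I)$.

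First, since the sequence of columns may be taken refining, I would reduce to the case that $A$ and $B$ are both unions of levels of a common column $C_j$. Writing $A = \bigsqcup_{a \in S_A} T^a B_j$, $B = \bigsqcup_{b \in S_B} T^b B_j$, and $I = \bigsqcup_{c \in U} T^c B_j$ with $U = H_0 \oplus \cdots \oplus H_{j-1}$, the $T$-invariance of $\mu$ gives
\[
\mu(A \cap T^k B) = \sum_{a \in S_A,\, b \in S_B} \mu\bigl(B_j \cap T^{k+b-a} B_j\bigr),
\]
and analogously $\mu(I \cap T^k I) = \sum_{c,\,c' \in U} \mu(B_j \cap T^{k+c'-c} B_j)$.

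Next, for a given $\varepsilon > 0$ and $n$, Lemma \ref{lem:approx} applied to $J = B_j$ furnishes an $N$ (large in terms of $\varepsilon$, $n$, and the maximum magnitude of the shifts appearing above) so that for each such shift $d$,
\[
\Bigl|\sum_{k=0}^{n-1}\mu(B_j \cap T^{k+d} B_j) - w_N \sum_{k=0}^{n-1}|V \cap (k+d+V)|\Bigr| < \varepsilon,
\]
where $V = D(B_j, N)$. A simple change of index shows that
\[
\Bigl|\sum_{k=0}^{n-1}|V\cap(k+d+V)| - \sum_{k=0}^{n-1}|V\cap(k+V)|\Bigr| \le |d|\cdot |V|,
\]
so after multiplying by $w_N$ this contributes only $|d|\cdot w_N |V| = |d|\cdot w_j$, bounded uniformly in $n$. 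Combining these estimates yields
\begin{align*}
\sum_{k=0}^{n-1}\mu(A \cap T^k B) &= w_N\,|S_A||S_B|\sum_{k=0}^{n-1}|V \cap (k+V)| + E_1,\\
\sum_{k=0}^{n-1}\mu(I \cap T^k I) &= w_N\,|U|^2\sum_{k=0}^{n-1}|V \cap (k+V)| + E_2,
\end{align*}
with $|E_1|,|E_2|$ bounded independently of $n$; alternatively the second identity is Lemma \ref{lem:pn} applied to $P(n)$, since the boundary correction involving $M = \max(U-U)$ contributes at most $O(1)$.

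Finally, take the ratio
\[
\frac{1}{a_n(I)}\sum_{k=0}^{n-1}\mu(A \cap T^k B) = \mu(I)^2\cdot\frac{\sum_{k=0}^{n-1}\mu(A \cap T^k B)}{\sum_{k=0}^{n-1}\mu(I \cap T^k I)}.
\]
The common factor $w_N \sum_{k=0}^{n-1}|V \cap (k+V)|$ cancels. Since $T$ is conservative ergodic, $a_n(I)\to\infty$, so $w_N\sum_{k=0}^{n-1}|V\cap(k+V)|\to\infty$ as well, the bounded errors are negligible in the limit, and the ratio converges to
\[
\mu(I)^2\cdot\frac{|S_A||S_B|}{|U|^2} = (|S_A|w_j)(|S_B|w_j) = \mu(A)\mu(B),
\]
as desired. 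The only real obstacle is the error bookkeeping: $N$ must be chosen large enough, as a function of $n$, $\varepsilon$, and the finite set of shifts, that Lemma \ref{lem:approx} applies uniformly across those shifts, and then one must verify that the resulting $O(1)$ boundary and shift errors are indeed dominated by the diverging main term.
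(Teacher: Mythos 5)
Your proposal is correct and follows essentially the same route as the paper: approximate the descendant counts via Lemma \ref{lem:approx}, control the boundary and shift corrections (Lemma \ref{lem:pn}, or your equivalent re-indexing of $\sum_k |V\cap(k+d+V)|$), and absorb the resulting $O(1)$ errors into the diverging $a_n(I)$. The only organizational difference is that you handle general finite unions of levels directly through the decomposition $\mu(A\cap T^kB)=\sum_{a,b}\mu(B_j\cap T^{k+b-a}B_j)$, whereas the paper first proves the case $A=B=J$ for the bottom level and then cites the argument of \cite{DGPS} for exactly this extension to arbitrary levels and unions.
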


\begin{proof} Let $F$ be the level in $C_0$. We first show that \eqref{eqn:wre} holds for $J$ the bottom level of any $C_j$. Let $U$ and $V$ be as in the statement of Lemma \ref{lem:pn}. We notice then that $D(I,n) = U \oplus V$ and $\mu(J) = \mu(I)/|D(I,j)| = \mu(I)/|U|$. Then, if we could show that 
\begin{equation}
\label{eqn:limit}
\frac{w_N}{a_n(I)} \left(|U|^2 \sum_{k=0}^{n-1} |V \cap (k + V)|\right) \rightarrow \mu(I)^2,
\end{equation}
where $N$ is some function of $n$, we would have that for any $\varepsilon$, we could find $M$ large enough such that for all $n \geq M$, $$\left|\frac{w_N}{a_n(I)} \left(\sum_{k=0}^{n-1} |V \cap (k + V)|\right)  - \left(\frac{\mu(I)}{|U|}\right)^2\right| < \varepsilon/2.$$ We also have by Lemma \ref{lem:approx} that we could in addition choose the $N$'s so that 
$$\left| \frac{1}{a_n(I)}\sum_{k=0}^{n-1} \mu(J \cap T^k J) - \frac{w_N}{a_n(I)} \cdot \sum_{k=0}^{n-1} |D(J,N) \cap (k + D(J,N))|\right| < \varepsilon/2,$$ so $$\left| \frac{1}{a_n(I)} \sum_{k=0}^{n-1} \mu(J \cap T^k J) - \mu(J)^2\right| < \varepsilon$$ and we would have convergence, as desired. 

Now we show the convergence in \eqref{eqn:limit}. By Lemma \ref{lem:pn}, if $M$ is the maximum value of $U-U$ and $P(n) = |U|^2 \sum_{k=0}^{n-1} |V \cup (k + V)|$, then $$\sum_{k=M}^{n-1-M}|(U \oplus V) \cap (k + U \oplus V)| \leq P(n) \leq \sum_{k=-M}^{n-1+M}|(U \oplus V) \cap (k + U \oplus V)|.$$ 

As $U \oplus V = D(I,N)$, we have by similar reasoning as in the proof of Lemma \ref{lem:approx} that
\begin{align*}
\sum_{k=M}^{n-1-M} \left(\mu(I \cap T^k I) - |k| \cdot w_N\right) &\leq w_N \cdot P(n)\\
 \leq \sum_{k=-M}^{n-1+M} \left(\mu(I \cap T^k I) + |k| \cdot w_N\right).
 \end{align*}
Now, since for each $n$, we may choose $N$ such that $\sum_{k=M}^{n-1-M} |k| \cdot w_N$ and $\sum_{k=-M}^{n-1+M} |k| \cdot w_N$ are uniformly bounded by some $C$, we have that 
$$ \frac{w_N \cdot P(n)}{a_n(I)}  \geq \frac{1}{a_n(I)} \left(\sum_{k=M}^{n-1-M} \mu(I \cap T^k I) - C\right) $$
and 
$$\frac{w_N \cdot P(n)}{a_n(I)} \leq \frac{1}{a_n(I)}\left( \sum_{k=-M}^{n-1+M} \mu (I \cap T^k I) + C\right).$$

 But since the $\mu(I \cap T^kI)$ terms are bounded above by $1$ but have a divergent sum, the right hand sides of both of the above inequalities tend to $\mu(I)^2$, giving us \eqref{eqn:limit} and therefore \eqref{eqn:wre} for $A = B = J$. 

Now when $A$ and $B$ are two levels of the same column, we can argue as in the proof of Theorem 2.2 in \cite{DGPS}, therefore obtaining the result for different levels and therefore also for finite unions of levels. 
\end{proof}

Now that we have that the weak rational ergodicity condition holds on levels in $I$, we want to be able to say that it holds for all measurable sets $A, B \subset I$. This will be true if we assume that the rank-one transformation has a bounded number of cuts. To show this, we begin with the following lemma that will allow us to bound how many times the intervals $\{T^k I\}$ cover $I$ for $|k| \leq M_m$. 

\begin{lemma} 
\label{lem:counting}
Let $T$ be a rank-one transformation and $I$ the level in $C_0$. Then, the sets $\{T^k I\}$ for $|k| \leq M_m$ cover almost every point of $I$ between $|D(I,m)|$ and $2 \cdot |D(I, m)|$ times. 
\end{lemma}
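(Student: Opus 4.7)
The plan is to reduce the claim to a combinatorial count of heights inside a sufficiently large column $C_N$ with $N \geq m$. Because $I$ is the disjoint union of its descendants in $C_N$, for a.e.\ $x \in I$ there is a unique $H \in D(I,N)$ with $x \in T^H(B_N)$. Since $I \subset C_0 \subset C_N$, a translate $T^{-k}x$ lies in $I$ exactly when it belongs to a level of $C_N$ at a height in $D(I,N)$, that is, when $H - k \in D(I,N)$. Therefore the number of $k$ with $|k| \leq M_m$ and $x \in T^k I$ equals $|D(I,N) \cap [H - M_m, H + M_m]|$, and the task reduces to bounding this cardinality.

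Next, I would use the refinement $D(I,N) = D(I,m) \oplus S$ with $S := H_m \oplus H_{m+1} \oplus \cdots \oplus H_{N-1}$, and decompose $H = \alpha + \sigma$ uniquely with $\alpha \in D(I,m) \subseteq [0, M_m]$ and $\sigma \in S$. The key combinatorial input is a \emph{gap property}: any two distinct elements of $S$ differ by at least $h_m$. This holds because the $|S|$ copies of $C_m$ embedded inside $C_N$ are based at the elements of $S$ and each occupies $h_m$ consecutive heights; any smaller gap would force two distinct levels of $C_N$ to share a height.

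For the lower bound, taking $s = \sigma$ contributes exactly $|D(I,m)|$ valid pairs because $\sigma + D(I,m) \subseteq [\sigma, \sigma + M_m] \subseteq [H - M_m, H + M_m]$ for every $\alpha \in [0, M_m]$. For the upper bound, the gap property together with $M_m \leq h_m - 1$ shows that at most one element of $S$ strictly above $\sigma$ and at most one strictly below $\sigma$ can contribute, since on each side the window of admissible shifts has length less than $h_m$. A direct analysis shows that for a right-shifted $s = \sigma + d_+$ with $d_+ \geq h_m$ the admissible $a$'s lie in $[0, \alpha + M_m - d_+]$, while for a left-shifted $s = \sigma + d_-$ with $-d_- \geq h_m$ they lie in $[\alpha - d_- - M_m, M_m]$. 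These sub-intervals are disjoint subsets of $[0, M_m]$, since their separation equals $d_+ + (-d_-) - 2M_m \geq 2(h_m - M_m) > 0$. Hence the combined contribution from $s \neq \sigma$ is at most $|D(I,m)|$, and adding the $s = \sigma$ contribution yields the upper bound $2|D(I,m)|$.

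The principal obstacle is establishing the gap property for $S$ cleanly from the refining/cutting-and-stacking structure of the columns; once that is in hand, the remainder is a careful pigeonhole argument comparing the window length $2M_m$ with the gap $h_m$ and exploiting the structural fact that $D(I,m)$ fits inside $[0, h_m - 1]$.
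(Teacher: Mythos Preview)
Your proposal is correct and follows essentially the same geometric idea as the paper: pass to a large column $C_N$, observe that the descendants of $I$ there decompose as $D(I,m)\oplus S$ with the elements of $S$ (the base heights of the embedded copies of $C_m$) spaced at least $h_m>M_m$ apart, and conclude that beyond the copy containing $x$ at most one adjacent copy on each side can contribute. The one point you should make explicit is that $N$ must be chosen (for a.e.\ $x$) so that $T^{-k}x$ stays inside $C_N$ for all $|k|\le M_m$; the paper arranges this by taking $N$ so that the copy of $C_m$ containing $x$ is neither the top nor the bottom copy in $C_N$, which is exactly what your phrase ``sufficiently large $C_N$'' needs to encode.
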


\begin{proof} We first show that for almost every $x \in I$, 
\begin{equation}
\label{eqn:12bound}
|D(I,m)| \leq |\{k \in [-M_m, M_m] : \exists y \in I, T^k y =x\}| \leq 2 |D(I,m)|.
\end{equation}
We fix an $x \in I$ and let $J$ be the level in $C_m$ that contains $x$ and $J^\prime$ be any descendant of $I$ in $C_m$. Set $d = h(J) - h(J^\prime)$. Clearly, $-M_m \leq d \leq M_m$ and so $T^dJ^\prime = J$. This holds for all $J^\prime \in D(I,m)$. By construction, $T^d$ is a bijection between $J$ and $J^\prime$, so for all $x \in I$ we have $|D(I,m)| \leq |\{k \in [-M_m, M_m] : \exists y \in I, T^k y =x\}|$. 

To show the upper bound, consider $J, J^\prime \in D(I,m)$. We pick $x$ to be a non-endpoint of $J$. We may assume that $J$ is above $J^\prime$. Let $C_N$ be the first column such that the copy of $C_m$ in $C_N$ containing $x$ is not the bottom or top copy of $C_m$ in $C_N$. That is, the copy of $C_m$ containing $x$ is $C_{m,N}[n]$ and there are copies $C_{m,N}[n-1]$ below and $C_{m,N}[n+1]$ above it. 

However, we see that any descendent of $J^\prime$ in $C_{m,N}[\ell]$ for $\ell \geq n+2$ is at least $2h_m - (h(J) - h(J^\prime)) > h_m$ levels from $x$ in $C_N$ and any descendent of $J^\prime$ in $C_{m,N}[\ell]$ for $\ell \leq n-1$ is at least $h_m + (h(J) - h(J^\prime)) > h_m$ levels from $x$ in $C_N$. Hence, as $M_m \leq h_m$, only the descendants of $J^\prime$ in $C_{m,N}[n]$ and $C_{m,N}[n+1]$ can cover $x$ for $|k| \leq M_m$. Thus, as each non-endpoint $x$ in $J$ can be covered at most $2$ times by any descendant of $J^\prime$ and there are $|D(I, m)|$, for a.e $x \in I$ we have that \eqref{eqn:12bound} as desired. 

Hence, the sets $\{T^k I\}$ for $|k| \leq M_m$ cover almost every point of $I$ at least $|D(I,m)|$ times and at most $2 \cdot |D(I, m)|$ times. 
 \end{proof}

We now show that  for transformations with a bounded number of cuts,  we can extend the weak rational ergodicity condition to all sets. Since by Theorem \ref{thm:levels}, we have \eqref{eqn:wre} for finite unions of levels, to prove it for arbitrary measurable sets, we will need some approximation results. The first of these is contained in the following lemma, whose proof we omit as it can be found in the proof of Theorem 2.3 in \cite{DGPS}. We remark here that our statement of this lemma is slightly different to that of \cite{DGPS} as our $I$ might not have measure one. 

\begin{lemma} 
\label{lem:mapprox}
Let $I$ be the level in $C_0$, $A, B \subset I$, and fix an $n \geq 0$. Then, choose $m$ such that $M_{m-1} \leq n < M_m$, where $M_m = \max\{D(I,m)\}$. If there exists a constant $c$ such that 
\begin{equation} 
\label{eqn:bound2} 
\sum_{k=0}^{M_m} \mu(I \cap T^k B_1) \leq c \mu(B_1) \sum_{k=0}^{M_{m-1}} \mu(I \cap T^k I). 
\end{equation} 
holds for all $B_1 \subset I$, then
\begin{equation} 
\label{eqn:bound1} 
\frac{1}{a_n(I)} \sum_{k=0}^{n-1} \mu(I \cap T^k B_1) \leq c \mu(B_1) \mu(I)^2, 
\end{equation} 
and therefore 
\begin{equation}
\label{eqn:approx} 
\left|\frac{1}{a_n(I)} \sum_{k=0}^{n-1} \mu(A \cap T^kB) - \frac{1}{a_n(I)} \sum_{k=0}^{n-1} \mu(A \cap T^k D)\right| \leq c\varepsilon, 
\end{equation} 
 where $D$ is a finite union of levels for which $\mu(B \triangle D) < \epsilon$.
\end{lemma}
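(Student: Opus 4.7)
The plan is to prove the two conclusions in sequence: first derive \eqref{eqn:bound1} from the hypothesis \eqref{eqn:bound2}, using only the sandwich $M_{m-1}\leq n < M_m$ and the definition of $a_n(I)$, and then deduce \eqref{eqn:approx} from \eqref{eqn:bound1} by a standard triangle-inequality argument applied to the symmetric difference $B\triangle D$.

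For the first step, since $n - 1 < M_m$, every term of $\sum_{k=0}^{n-1}\mu(I\cap T^k B_1)$ is present in $\sum_{k=0}^{M_m}\mu(I\cap T^k B_1)$, so the hypothesis \eqref{eqn:bound2} gives
\[
\sum_{k=0}^{n-1}\mu(I\cap T^k B_1) \;\leq\; c\,\mu(B_1)\sum_{k=0}^{M_{m-1}}\mu(I\cap T^k I).
\]
Now the second inequality $M_{m-1}\leq n$ lets us dominate the right-hand sum by $\sum_{k=0}^{n-1}\mu(I\cap T^k I)$, which by the definition $a_n(I)=\mu(I)^{-2}\sum_{k=0}^{n-1}\mu(I\cap T^k I)$ equals $a_n(I)\,\mu(I)^2$. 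Dividing through by $a_n(I)$ yields \eqref{eqn:bound1}.

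For the second step, I split $B$ and $D$ over $B\cap D$ to get
\[
\mu(A\cap T^k B)-\mu(A\cap T^k D) \;=\; \mu(A\cap T^k(B\setminus D)) \;-\; \mu(A\cap T^k(D\setminus B)),
\]
so the absolute value is at most $\mu(A\cap T^k(B\triangle D))$, and since $A\subset I$ it is at most $\mu(I\cap T^k(B\triangle D))$. Summing over $0\leq k<n$, dividing by $a_n(I)$, and applying \eqref{eqn:bound1} with $B_1 = B\triangle D\subset I$ gives the bound $c\,\mu(B\triangle D)\,\mu(I)^2 \leq c\varepsilon\,\mu(I)^2$, which is the desired conclusion (the factor $\mu(I)^2$ being absorbed into the constant, as in \cite{DGPS}, or simply by rescaling so that $\mu(I)=1$).

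The argument is almost entirely formal; the only part that needs any care is the bookkeeping around the inequalities $M_{m-1}\leq n < M_m$ to ensure that the partial sum up to $M_{m-1}$ can be compared to $a_n(I)\mu(I)^2$ without losing a multiplicative constant. Once that step is in place, the triangle-inequality estimate to pass from $B$ to its level-approximation $D$ is routine and does not use any feature of the rank-one construction beyond $A\subset I$.
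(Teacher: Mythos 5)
Your proof is correct and is essentially the argument the paper intends (the paper omits the proof of this lemma, deferring to the proof of Theorem 2.3 in \cite{DGPS}): contain the sum over $0\le k\le n-1$ inside the sum over $0\le k\le M_m$, compare $\sum_{k=0}^{M_{m-1}}\mu(I\cap T^kI)$ with $a_n(I)\mu(I)^2$ using $M_{m-1}\le n$, and then pass from $B$ to $D$ via the triangle inequality applied to $B\triangle D$ with \eqref{eqn:bound1} taken at $B_1=B\triangle D$. The only loose ends are harmless constant-factor bookkeeping: in the edge case $M_{m-1}=n$ your comparison picks up one extra term $\mu(I\cap T^nI)\le\mu(I)\le a_n(I)\mu(I)^2$, so at worst $c$ becomes $2c$; and the final bound is $c\varepsilon\mu(I)^2$ rather than $c\varepsilon$, a discrepancy you correctly flag and which the paper itself acknowledges in its remark that, unlike in \cite{DGPS}, its $I$ need not have measure one.
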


%We remark here that in \cite{DGPS}, the authors only dealt with cutting and stacking transformations and not arbitrary rank-one transformations. However, their methods still apply as we can approximate sets with arbitrary levels that are not necessarily intervals, because the Rokhlin columns can be taken to be refining. 

With this lemma, we  prove the following theorem about the weak rational ergodicity of rank-one transformations with a bounded number of cuts; this was proved in \cite[2.3]{DGPS}
under the assumption of exponential growth, a condition that implies normality. 

\begin{theorem} 
\label{thm:wre}
Let $T$ be a rank-one transformation with a bounded number of cuts. Then $T$ is weakly rationally ergodic. 
\end{theorem}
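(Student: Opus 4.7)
The plan is to reduce weak rational ergodicity on arbitrary measurable subsets of $I$ to the case of finite unions of levels (already handled by Theorem~\ref{thm:levels}) via the approximation machinery of Lemma~\ref{lem:mapprox}. That lemma demands exactly one nontrivial input, the bound \eqref{eqn:bound2} with a constant $c$ that is uniform in $m$; this is precisely where the bounded-cuts hypothesis will be used. I would also need to observe at the outset that $I$ sweeps out: every level of every column $C_n$ has the form $T^j B_n$ with $B_n \subset I$, so $\bigcup_{j \in \Zm} T^j I$ contains every level and hence equals $X$ mod $\mu$.

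The main step is the verification of \eqref{eqn:bound2}. Let $R$ be a uniform upper bound on the number of cuts, so that $r_{m-1} \le R$ for all $m$. Lemma~\ref{lem:counting} gives the pointwise inequality $\sum_{|k| \le M_m} \mathbf{1}_{T^k I}(x) \le 2|D(I,m)|$ for a.e.\ $x \in I$; integrating against $\mathbf{1}_{B_1}$ for $B_1 \subset I$ and using $T$-invariance of $\mu$ yields
$$\sum_{k=0}^{M_m} \mu(I \cap T^k B_1) \le \sum_{|k| \le M_m} \mu(I \cap T^k B_1) \le 2|D(I,m)|\,\mu(B_1).$$
Applying the lower bound of Lemma~\ref{lem:counting} to $B_1 = I$, together with the symmetry $\mu(I \cap T^k I) = \mu(I \cap T^{-k} I)$, gives $\sum_{k=0}^{M_{m-1}} \mu(I \cap T^k I) \ge \tfrac{1}{2}|D(I,m-1)|\,\mu(I)$. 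Combining these two estimates, \eqref{eqn:bound2} holds with
$$c = \frac{4|D(I,m)|}{|D(I,m-1)|\,\mu(I)} = \frac{4\,r_{m-1}}{\mu(I)} \le \frac{4R}{\mu(I)},$$
which is the sought uniform constant.

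Given this uniform $c$, fix measurable $A, B \subset I$ and $\varepsilon > 0$, and choose a large $N$ and finite unions of levels $A', D$ of $C_N$ with $\mu(A \triangle A'), \mu(B \triangle D) < \varepsilon$. Applying Lemma~\ref{lem:mapprox} once yields
$$\Bigl|\tfrac{1}{a_n(I)}\sum_{k=0}^{n-1} \mu(A \cap T^k B) - \tfrac{1}{a_n(I)}\sum_{k=0}^{n-1} \mu(A \cap T^k D)\Bigr| \le c\varepsilon.$$
To replace $A$ by $A'$, I rewrite $\mu(A \cap T^k D) = \mu(D \cap T^{-k} A)$ and repeat the argument: Lemma~\ref{lem:counting} is symmetric in the sign of $k$, so the analogue of \eqref{eqn:bound2} for negative powers holds with the same $c$, and Lemma~\ref{lem:mapprox} applies. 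Finally, Theorem~\ref{thm:levels} gives $\tfrac{1}{a_n(I)}\sum_{k=0}^{n-1} \mu(A' \cap T^k D) \to \mu(A')\mu(D)$, and since $|\mu(A')\mu(D) - \mu(A)\mu(B)| \to 0$ as $\varepsilon \to 0$, a triangle inequality finishes the proof. The hard part is entirely the uniform $c$: without the bounded-cuts hypothesis, the ratio $|D(I,m)|/|D(I,m-1)| = r_{m-1}$ can be unbounded, and the approximation lemma loses its force.
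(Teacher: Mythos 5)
Your proposal is correct and follows essentially the same route as the paper: reduce to finite unions of levels via Lemma~\ref{lem:mapprox}, and verify the uniform bound \eqref{eqn:bound2} by integrating the covering estimate of Lemma~\ref{lem:counting}, which produces the same constant $c = 4K/\mu(I)$ with $K$ the bound on the cuts. Your treatment of the set $A$ (passing to negative powers via $\mu(A \cap T^k D) = \mu(D \cap T^{-k}A)$) is merely a more explicit version of the paper's ``applying a similar argument to $A$.''
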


\begin{proof}  By Theorem \ref{thm:levels}, we have weak rational ergodicity for finite unions of levels. To prove the result for arbitrary measurable sets, we establish some approximation estimates. The first estimates are as in the proof of Theorem 2.3 in \cite{DGPS}. 

We let $A,B \subset I$ be measurable sets. Given $\varepsilon > 0$, let $D$ be a finite union of levels so that $\mu(B \Delta D) < \varepsilon$. We need to show that there is a constant $c$ such that \eqref{eqn:approx} holds for all $n$. By Lemma \ref{lem:mapprox}, it suffices to show that there exists a $c$ such that the bound in \eqref{eqn:bound2} holds for all $B_1 \subset I$.

By Lemma \ref{lem:counting}, the sets $\{T^k I\}$ for $-M_m \leq k \leq M_m$ cover almost every point of $I$ at most $2\cdot |D(I,m)|$ and at least $|D(I,m)|$ times. Since $B_1 \subset I$, we have 
\begin{equation} 
\label{eqn:capstone1} 
\sum_{k = -M_m}^{M_m} \mu(I \cap T^k B_1) = \sum_{k = -M_m}^{M_m} \mu(T^kI \cap B_1) \leq 2 \mu(B_1) \cdot |D(I,m)| 
\end{equation} 
and 
\begin{equation} 
\label{eqn:multbyone1}
\sum_{k=-M_{m-1}}^{M_{m-1}} \mu(I \cap T^k I) \geq |D(I,m-1)| \mu(I).
\end{equation} 
From \eqref{eqn:capstone1} and \eqref{eqn:multbyone1}, we get $$ \sum_{k=-M_m}^{M_m} \mu(I \cap T^k B_1) \leq 2\left(\frac{|D(I,m)|}{|D(I, m-1)|}\right)\left(\frac{\mu(B_1)}{\mu(I)}\right)\left( \sum_{k=-M_{m-1}}^{M_{m-1}} \mu(I \cap T^k I)\right). $$ 
So \begin{align*}\sum_{k = 0}^{M_m} \mu(I \cap T^k B_1) &\leq 2 \frac{\mu(B_1)}{\mu(I)}\left(\frac{|D(I,m)|}{|D(I,m-1)|}\right)\left(2\left(\sum_{k=0}^{M_{m-1}} \mu(I \cap T^k I)\right) - 1\right) \\ &\leq 4\frac{\mu(B_1)}{\mu(I)}\left(\frac{|D(I,m)|}{|D(I,m-1)|}\right)\sum_{k=0}^{M_{m-1}} \mu(I \cap T^k I).\end{align*} 
Since $T$ has bounded cuts, we have that $|D(I,m)|/|D(I,m-1)| = r_{m-1} \leq K$ for all $m$, for some $K < \infty$.  Upon setting $c = 4K / \mu(I)$, we have satisfied \eqref{eqn:bound2}. 

Therefore \eqref{eqn:approx} holds, showing that we can approximate a measurable set $B$ with a finite unions of levels. Applying a similar argument to $A$, we can approximate $A$ with a finite union of levels as well, showing that it suffices to prove \eqref{eqn:wre} for finite unions of levels. But we showed this in Theorem \ref{thm:levels} so we are done. 
\end{proof}

Since Theorem \ref{thm:levels} gives us a form of weak rational ergodicity on levels for rank-one transformations, one could hope that all rank-one transformations are weakly rationally ergodic. In Section \ref{sec:gen}, we will see that this is not true as we will use genericity results to show that there exist rank-one transformations that are not weakly rationally ergodic. However, we do have that all rank-one transformations are subsequence weakly rationally ergodic, which is the next result we show;
this is in \cite[2.4]{DGPS} under the assumption of exponential growth.

\begin{theorem} 
\label{thm:swre}
All rank-one transformations are subsequence weakly rationally ergodic. 
\end{theorem}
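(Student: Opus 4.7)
The plan is to adapt the approximation argument of Theorem \ref{thm:wre} by evaluating along the subsequence $n_i = M_i$, where $M_i = \max D(I,i)$. At this scale Lemma \ref{lem:counting} provides a uniform bound that bypasses the ratio $r_{m-1}$ whose boundedness was essential to Theorem \ref{thm:wre}. By Theorem \ref{thm:levels}, condition \eqref{eqn:wre} with $F = I$ already holds along every sequence (and so in particular along $\{M_i\}$) for finite unions of levels, so the only remaining task is to transfer this convergence to arbitrary measurable $A, B \subset I$ via approximation with error bounds uniform in $i$.

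The key uniform bound I would establish is: for every $B_1 \subset I$,
\[ \frac{1}{a_{M_i}(I)} \sum_{k=0}^{M_i - 1} \mu(I \cap T^k B_1) \leq C \mu(I) \mu(B_1) \]
with $C$ an absolute constant. This is immediate from Lemma \ref{lem:counting}: one obtains the numerator bound $\sum_{k=-M_i}^{M_i} \mu(I \cap T^k B_1) \leq 2|D(I,i)|\mu(B_1)$ and the denominator bound $\sum_{k=0}^{M_i}\mu(I \cap T^k I) \geq \tfrac{1}{2}|D(I,i)|\mu(I)$, and rescaling by $\mu(I)^2$ gives $C = 4$. The analogous bound with $T^{-k}$ in place of $T^k$, needed for the $A$-side approximation, follows by the same argument since the covering statement of Lemma \ref{lem:counting} is invariant under $k \mapsto -k$.

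With this uniform bound in hand, the approximation argument runs as in Theorem \ref{thm:wre}: given measurable $A, B \subset I$ and $\varepsilon > 0$, approximate by finite unions of levels $A', B'$ with $\mu(A \triangle A'), \mu(B \triangle B') < \varepsilon$, and decompose
\[ \frac{1}{a_{M_i}(I)} \sum_{k=0}^{M_i - 1} \mu(A \cap T^k B) - \mu(A)\mu(B) \]
into the two approximation errors (each bounded by $C\mu(I)\varepsilon$ uniformly in $i$ by the previous paragraph, using $B_1 = B \triangle B'$ or $B_1 = A \triangle A'$ together with the inclusion $\mu((A\triangle A') \cap T^k B) \leq \mu((A\triangle A') \cap T^k I)$), the main term for the pair $(A',B')$ (which converges to $\mu(A')\mu(B')$ along $\{M_i\}$ by Theorem \ref{thm:levels}), and the trivial deterministic error $|\mu(A')\mu(B') - \mu(A)\mu(B)| = O(\varepsilon)$. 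Passing to $i \to \infty$ and then $\varepsilon \to 0$ yields \eqref{eqn:wre} along the subsequence $\{M_i\}$.

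The main obstacle, compared to the bounded-cut setting of Theorem \ref{thm:wre}, is obtaining the uniform approximation bound without controlling $r_{m-1}$. In Theorem \ref{thm:wre} this required bridging $M_{m-1}$ and $M_m$ through the factor $|D(I,m)|/|D(I,m-1)| = r_{m-1}$, which is unbounded in general. Restricting to the subsequence $\{M_i\}$ collapses the comparison to a single scale, where Lemma \ref{lem:counting} supplies both the upper bound on the numerator and the lower bound on the denominator with a ratio that is absolute, with no growth condition on the cuts.
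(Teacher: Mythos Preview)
Your proposal is correct and follows essentially the same approach as the paper: restrict to the subsequence determined by $M_m$, use Lemma \ref{lem:counting} at a single scale to obtain the uniform bound $\sum_{k=0}^{M_m}\mu(I\cap T^kB_1)\le (4/\mu(I))\,\mu(B_1)\sum_{k=0}^{M_m}\mu(I\cap T^kI)$, and then invoke Theorem \ref{thm:levels} plus the approximation machinery of Lemma \ref{lem:mapprox}. The paper takes $n_m=M_m+1$ rather than $M_m$ so that the sum $\sum_{k=0}^{n_m-1}$ runs exactly through $k=M_m$, matching the range in Lemma \ref{lem:counting}; your indexing is off by a single harmless term, but otherwise the argument is the same.
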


\begin{proof}  
Given a rank-one transformation, we claim that it is subsequence weakly rationally ergodic along the subsequence $\{M_m + 1\}$. Following the proof of the previous theorem, Theorem \ref{thm:wre}, to prove subsequence weak rational ergodicity, it suffices to show that for all $B_1 \subset I$, 
\begin{equation}
\label{eqn:AB1bound}
\frac{1}{a_{n_i}(I)}\sum_{k=0}^{{n_i}-1} \mu(A \cap T^k B_1) \leq c \mu(B_1)
\end{equation}
is true along the subsequence $n_m = M_m+1$ are true. But we have that 

$$\sum_{k=0}^{n_m-1} \mu(A \cap T^k B_1) \leq \sum_{k=0}^{n_m-1} \mu(I \cap T^k B_1) = \sum_{k=0}^{M_m} \mu(I \cap T^k B_1)$$ 
and $$ \sum_{k=0}^{M_m} \mu(I \cap T^k I)  = \sum_{k=0}^{n_m-1} \mu(I \cap T^kI) = a_{{n_m}}(I) \cdot \mu(I)^2.$$ 
Thus, to show \eqref{eqn:AB1bound} it suffices to show that there is a $c$ for which 
\begin{equation}
\label{eqn:sumbound}
\sum_{k=0}^{M_m} \mu(I \cap T^k B_1) \leq c \mu(B_1) \sum_{k=0}^{M_m} \mu(I \cap T^k I). 
\end{equation} 
Now, as in the proof of the previous theorem, since the sets $\{T^k I\}$ for $-M_m \leq k \leq M_m$ cover almost every point of $I$ at most $2 \cdot |D(I,m)|$ times and at least $|D(I,m)|$ times, we get that 
$$ \sum_{k=-M_m}^{M_m} \mu(I \cap T^k B_1) \leq 2\left(\frac{|D(I,m)|}{|D(I, m)|}\right) \left(\frac{\mu(B_1)}{\mu(I)}\right)\left( \sum_{k=-M_m}^{M_m} \mu(I \cap T^k I)\right). $$ 
So \begin{align*}\sum_{k = 0}^{M_m} \mu(I \cap T^k B_1) &\leq 2 \left(\frac{|D(I,m)|}{|D(I,m)|}\right)\left(\frac{\mu(B_1)}{\mu(I)}\right)\left(2\left(\sum_{k=0}^{M_m} \mu(I \cap T^k I)\right) - 1\right) \\ &\leq 4\frac{\mu(B_1)}{\mu(I)}\sum_{k=0}^{M_m} \mu(I \cap T^k I).\end{align*} 
Setting $c = 4/ \mu(I)$, we satisfy \eqref{eqn:sumbound}. Therefore \eqref{eqn:AB1bound} holds, showing that we can approximate a measurable set $B$ with a finite unions of levels along the subsequence $\{M_m +1\}$. We can prove a similar approximation statement for $A$. Now by Theorem \ref{thm:levels}, all rank-one transformations are subsequence weakly rationally ergodic along the subsequence $\{M_m + 1\}$. \end{proof}

\section{The Centralizer for Rank-One Transformations}

Recall that $S:X\to X$ is  an \textbf{invertible nonsingular} transformation if it is invertible, measurable and  $\mu(A)=0$ if and only if $\mu(S(A))=0$.   The (nonsingular) \textbf{centralizer} $C(T)$ of a transformation $T$  consists of all invertible nonsingular $S$ such that $ST = TS$. It was shown by Aaronson \cite{Aa77} that if an invertible $S$ is in the  nonsingular centralizer of a weakly rationally ergodic $T$, then $S$ is measure-preserving. In fact, this result holds even if $T$ is only subsequence weakly rationally ergodic, with essentially the same argument,
which we include below for completeness. As a consequence, we have that any invertible, nonsingular $S$ in the centralizer of a rank-one transformation is measure-preserving.  Recall that $T$ is called 
\textbf{squashable} if it commutes with a non-measure-preserving $S$. Regarding the centralizer we note that Ryzhikov \cite{Ry11} has shown 
that the centralizer of a zero type conservative ergodic infinite measure-preserving transformation 
is trivial, i.e., consists just of powers of the transformation. In the finite measure-preserving case, it is
well known that for rank-one transformations an element $S$ of the centralizer is a limit in the weak topology of powers of $T$ \cite{Ki86}.

\begin{proposition} Let $T$ be a conservative ergodic measure-preserving transformation. If $T$ is subsequence weakly rationally ergodic and $S$ is an invertible, nonsingular transformation in $C(T)$, then $S$ is measure-preserving, thus $T$ is not squashable.
\end{proposition}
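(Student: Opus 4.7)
The plan is to show that the Radon--Nikodym derivative $\phi = d(\mu \circ S)/d\mu$ is identically $1$. Since $S$ is invertible and nonsingular, $\phi$ exists and is strictly positive. A short change of variables, using $TS = ST$ and that $T$ preserves $\mu$, will give $\phi \circ T = \phi$ almost everywhere, so by ergodicity of $T$ there is a constant $c > 0$ with $\mu(SE) = c\,\mu(E)$ for every measurable $E$. It then suffices to rule out $c \neq 1$.

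Let $F$ be a sweep-out set of positive finite measure witnessing subsequence weak rational ergodicity along a subsequence $\{n_i\}$. Because $T$ is conservative and ergodic, the positive measure set $S^{-1}F$ sweeps out, so $\bigcup_{k \in \Zm} T^k S^{-1}F = X$ modulo $\mu$, and there must exist $k_0 \in \Zm$ with $\mu(F \cap T^{k_0}S^{-1}F) > 0$. Using the commutation $T^{k_0}S^{-1} = S^{-1}T^{k_0}$, I will set
\[
A \;=\; F \cap S^{-1}T^{k_0}F, \qquad B \;=\; T^{-k_0}SA.
\]
Then $A \subset F$ has positive measure, $B \subset F$ by construction, and $\mu(B) = \mu(SA) = c\,\mu(A)$.

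I then apply the subsequence weak rational ergodicity hypothesis to $A$ and to $B$, both of which lie in $F$:
\[
\frac{1}{a_{n_i}(F)}\sum_{k=0}^{n_i-1}\mu(A \cap T^kA) \to \mu(A)^2,\qquad
\frac{1}{a_{n_i}(F)}\sum_{k=0}^{n_i-1}\mu(B \cap T^kB) \to \mu(B)^2 = c^2\mu(A)^2.
\]
On the other hand, the identity $\mu(B \cap T^kB) = \mu(SA \cap T^k SA) = c\,\mu(A \cap T^k A)$, which follows from $T$-invariance of $\mu$ together with $TS = ST$, shows that the second average in fact tends to $c\,\mu(A)^2$. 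Comparing the two evaluations forces $c^2\mu(A)^2 = c\,\mu(A)^2$, hence $c = 1$ and $S$ preserves $\mu$; in particular $T$ commutes with no non-measure-preserving invertible nonsingular transformation, so $T$ is not squashable.

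The main obstacle I anticipate is the possibility that $\mu(F \cap S^{-1}F) = 0$, which would block the naive choice $A = F \cap S^{-1}F$. The sweep-out property of $S^{-1}F$ under $T$ is what lets me introduce the $T^{k_0}$-shift to produce usable $A$ and $B$ inside $F$; the shift then cancels cleanly in the computation of $\mu(B \cap T^kB)$, so no information about $k_0$ survives into the final comparison of limits.
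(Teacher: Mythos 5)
Your proof is correct, and it takes a genuinely different route from the paper's. The paper first transports the subsequence weak rational ergodicity property from $F$ to $SF$ (using $\mu\circ S^{-1}=c^{-1}\mu$ and $ST=TS$), then invokes Aaronson's result that the property on $F$ and on $SF$ implies it on the union $F\cup SF$, and finally runs the diagonal computation $\mu(SA)^2=\lim\frac{1}{a_{n_i}(F\cup SF)}\sum_k\mu(SA\cap T^kSA)=c\,\mu(A)^2$ on that union. You avoid the union lemma entirely: conservativity and ergodicity give a $k_0$ with $\mu(F\cap S^{-1}T^{k_0}F)>0$, and the shift $B=T^{-k_0}SA$ drags $SA$ back inside $F$, so the single hypothesis on $F$ suffices. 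Both arguments exploit the same asymmetry --- the diagonal sum for an $S$-image scales linearly in $c$ via the identity $\mu(SA\cap T^kSA)=\mu\bigl(S(A\cap T^kA)\bigr)=c\,\mu(A\cap T^kA)$, but quadratically in $c$ via the limit formula --- forcing $c^2=c$. What the paper's route buys is that it is the standard Aaronson argument and records along the way that the collection of sets on which the property holds is $S$-invariant; what yours buys is self-containment (no appeal to the closure of $R_\kappa(T)$ under unions) at the modest cost of producing $k_0$, and the observation that $k_0$ cancels from $\mu(B\cap T^kB)$ is exactly the point that makes this work. All the supporting steps you sketch (the $T$-invariance of the Radon--Nikodym derivative of $S$, positivity and finiteness of $c$, and $A,B\subset F$ with $\mu(A)>0$) check out.
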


\begin{proof} It is well-known that if $S$ is nonsingular and commutes with $T$, then there exists a $c$ such that $\mu (S (A)) = c \cdot \mu(A)$ for all measurable $A \subset X$. Since $T$ is subsequence weakly rationally ergodic, there exists a set $F \subset X$ that sweeps out and a sequence $\{n_i\} \subset \Nm$ on which for all $A,B \subset F$, 
$$ \lim_{i \rightarrow \infty} \frac{1}{a_{n_i}(F)} \sum_{k=0}^{n_i} \mu (A \cap T^k B) = \mu(A)\mu(B).$$ By the invertibility of $S$,   for any $C, D \subset SF$, there exist $A, B \subset F$ such that $C = S A, D = SB$. Then, as $\mu \circ S^{-1} = c^{-1} \mu$, and $S$ and $T$ commute,  
$$\lim_{i \rightarrow \infty} \frac{1}{a_{n_i}(SF)} \sum_{k=0}^{n_i} \mu(C \cap D) = \mu(C) \mu (D)$$ for all $C, D \subset SF$. As the subsequence weak rational ergodicity property holds on $F$ and $SF$, it also holds on $F \cup SF$ \cite{Aa12},  so for $A \subset F$,  
\begin{align*}
\mu (SA) \mu (SA) & = \lim_{i \rightarrow \infty} \frac{1}{a_{n_i}(F \cup SF)} \sum_{k=0}^{n_i} \mu (SA \cap T^k SA) \\
& = c \cdot \lim_{i \rightarrow \infty} \frac{1}{a_{n_i}(F \cup SF)} \sum_{k=0}^{n_i} \mu (A \cap T^k A) \\
& = c \mu (A) \mu (A).
\end{align*}
But we also have that $\mu(SA)^2 = c^2 \mu(A)^2$, so   $c = 1$.
\end{proof}

As a result of this proposition and Theorem \ref{thm:swre}, we have the following corollary. 

\begin{corollary}\label{C:centmp} It $T$ is a rank-one transformation and $S$ is in the centralizer of $T$, then $S$ is measure-preserving. Thus $T$ is not squashable.
\end{corollary}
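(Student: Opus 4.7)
The corollary is an immediate consequence of combining the two preceding results, so my plan is simply to verify that the hypotheses of the proposition are met for a rank-one transformation $T$ and then quote it.

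First I would recall that every rank-one transformation $T$ is, by the definition given earlier in the paper, conservative ergodic and measure-preserving. Next, I would apply Theorem \ref{thm:swre}, which asserts that every rank-one transformation is subsequence weakly rationally ergodic (in fact along the sequence $\{M_m+1\}$). Thus $T$ satisfies all the hypotheses of the preceding proposition.

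Therefore, if $S$ is any invertible nonsingular transformation in the centralizer $C(T)$, the proposition immediately implies that $S$ is measure-preserving. Since a squashable transformation is one that commutes with a non-measure-preserving nonsingular invertible transformation, it follows that $T$ is not squashable, completing the proof.

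There is no real obstacle here, as the corollary is purely a combination of Theorem \ref{thm:swre} with the proposition; the only thing to check is that the class of transformations ``conservative ergodic measure-preserving and subsequence weakly rationally ergodic'' contains all rank-one transformations, which is exactly what Theorem \ref{thm:swre} provides.
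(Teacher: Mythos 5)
Your proposal is correct and matches the paper exactly: the paper derives this corollary as an immediate consequence of the preceding proposition together with Theorem \ref{thm:swre}, which is precisely the combination you describe. Nothing further is needed.
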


 Our result has an interesting consequence for Maharam transformations, which are a special class of infinite measure-preserving transformations arising from nonsingular transformations.  If $T$ is
 an invertible nonsingular transformation on $X$, we can define the Radon-Nikodym derivative $\omega(x)=\frac{d\mu\circ T}{d\mu}(x)$. Then define a space $X^*=X\times \mathbb R^+$ with  measure $\mu*=\mu\times\lambda$. The \textbf{Maharam transformation} $T^*$ is defined on $X^*$ 
 by $T^*(x,y)=(Tx,y/\omega(x))$.
It can be shown that $T^*$ is measure-preserving with respect to the infinite measure 
$\mu*$ and is conservative if and only if $T$ is \cite{Ma64}. There are cases when $T^*$ is also ergodic, namely when the
nonsingular transformation $T$ is type III (see e.g. \cite{DS09}). There are also conservative 
ergodic Maharam $\mathbb Z$-extensions (see e.g., \cite{Aa97}), and our result would also apply to them.

\begin{corollary}
Let $T$ be a conservative nonsingular transformation. The Maharam transformation $T^*$ is  not rank-one. 
\end{corollary}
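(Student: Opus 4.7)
The plan is to prove this by contradiction using Corollary \ref{C:centmp}, by exhibiting an explicit non-measure-preserving element in the centralizer of $T^*$. Suppose for contradiction that $T^*$ is rank-one; then $T^*$ is conservative, ergodic, and measure-preserving on $(X^*,\mu^*)$, so Corollary \ref{C:centmp} applies: every invertible nonsingular $S$ commuting with $T^*$ is in fact measure-preserving.

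The key observation is that the Maharam construction has a built-in scaling symmetry. For any $c > 0$ with $c \neq 1$, define $S_c : X^* \to X^*$ by $S_c(x,y) = (x, cy)$. I would first check that $S_c$ is invertible (with inverse $S_{1/c}$) and nonsingular with respect to $\mu^* = \mu \times \lambda$: since the dilation $y \mapsto cy$ scales one-dimensional Lebesgue measure by a factor of $c$, we have $\mu^*(S_c E) = c\,\mu^*(E)$ for every measurable $E \subset X^*$. In particular, when $c \neq 1$ the map $S_c$ is emphatically not measure-preserving.

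Next I would verify that $S_c$ commutes with $T^*$ by direct computation:
\[
T^* S_c(x,y) = T^*(x,cy) = \bigl(Tx,\tfrac{cy}{\omega(x)}\bigr) = S_c\bigl(Tx,\tfrac{y}{\omega(x)}\bigr) = S_c T^*(x,y).
\]
Thus $S_c$ lies in the nonsingular centralizer $C(T^*)$. Under our contradictory assumption that $T^*$ is rank-one, Corollary \ref{C:centmp} forces $S_c$ to be measure-preserving, contradicting the fact that $\mu^* \circ S_c = c\,\mu^*$ with $c \neq 1$.

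There is no serious obstacle here: the argument is essentially the observation that $T^*$ is tautologically squashable by the $\mathbb{R}^+$-action in the second coordinate, so once one has the non-squashability consequence of subsequence weak rational ergodicity (Corollary \ref{C:centmp}), the conclusion is immediate. The only thing to be mindful of is confirming that rank-one (as defined in the paper) indeed entails the hypotheses of Corollary \ref{C:centmp}, namely conservativity and ergodicity of $T^*$, which hold automatically since rank-one transformations are conservative ergodic and $T^*$ is conservative whenever $T$ is \cite{Ma64}.
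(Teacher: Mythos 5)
Your proposal is correct and follows essentially the same route as the paper: both exhibit the scaling maps $(x,y)\mapsto(x,cy)$ as non-measure-preserving elements of the centralizer of $T^*$ and then invoke Corollary~\ref{C:centmp}. The only cosmetic difference is that you verify the commutation explicitly and fold the non-ergodic case into the rank-one hypothesis, whereas the paper dispatches that case separately and cites the commutation as well known.
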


\begin{proof} It is clear when $T^*$ is not ergodic. Then assume $T^*$ is ergodic. It is well known that $T^*$ commutes with
the non-measure-preserving  transformations $Q(x,y)=(x,a y)$ for any positive constant $a$.
Then use Corollary~\ref{C:centmp}.
\end{proof}

%%%%%%%%%%%%% RATIONAL ERGODICITY %%%%%%%%%%%%%%%%%
\section{Bounded Rational Ergodicity}

In this section, we extend the results of the previous section and show that Theorems \ref{thm:wre} and \ref{thm:swre} still hold if we replace the conclusions of weak rational ergodicity and subsequence weak rational ergodicity with bounded rational ergodicity and subsequence bounded rational ergodicity. 

Let  $f: X \rightarrow X$ is a measurable function. Then define $$S_n(f) := \sum_{k=0}^{n-1} f \circ T^k.$$ A transformation $T$ is said to be \textbf{rationally ergodic} if there exists an $M < \infty$ and $F \subset X$ that sweeps out of positive finite measure such that \textbf{Renyi's Inequality} is satisfied:
\begin{equation}
\label{eqn:re}
\int_F (S_n(1_F))^2 d \mu \leq M \left(\int_F S_n(1_F)d\mu\right)^2
\end{equation}
for all $n \in \Nm$. Furthermore, we say that $T$ is \textbf{boundedly rationally ergodic} if there exists an $F \subset X$ of positive finite measure that sweeps out such that 
\begin{equation}
\label{eqn:bre}
\sup_{n \geq 1} \left\| \frac{1}{a_n(F)} S_n (1_F) \right\|_\infty < \infty.
\end{equation}
If \eqref{eqn:re} holds only for a subset $\{n_i\} \subset \Nm$, then we say that $T$ is \textbf{subsequence rationally ergodic} and if \eqref{eqn:bre} holds only for a subset $\{n_i\} \subset \Nm$, then we say that $T$ is \textbf{subsequence boundedly rationally ergodic}. 

\begin{proposition}
\label{prop:supimp}
Let $T$ be a rank-one transformation and $I$ be the level in $C_0$. If for all $B \subset I$ and for a fixed $n \in \Nm$, we have that 
\begin{equation} 
\label{eqn:bound3} 
\frac{1}{a_n(I)} \sum_{k=0}^{n-1} \mu(I \cap T^k B) \leq c \mu(B), \end{equation} then for that $n$,
\begin{equation}
\label{eqn:brebound}
\left\| \frac{1}{a_n(I)} S_n (1_I) \right\|_\infty \leq c
\end{equation}
\end{proposition}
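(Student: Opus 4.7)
The plan is a two-step argument: first convert the hypothesis into a pointwise bound on $I$ via duality, then propagate this bound to all of $X$ using the orbit structure.

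For the first step, I would use $T$-invariance of $\mu$ to rewrite each summand as $\mu(I \cap T^k B) = \int_B 1_I \circ T^k\,d\mu$, which summed over $k$ gives the identity
$$\sum_{k=0}^{n-1} \mu(I \cap T^k B) = \int_B S_n(1_I)\,d\mu$$
for every measurable $B \subset I$. The hypothesis \eqref{eqn:bound3} then reads
$$\int_B \frac{S_n(1_I)}{a_n(I)}\,d\mu \le c\,\mu(B) \quad \text{for all measurable } B \subset I.$$
Testing this against $B = \{x \in I : S_n(1_I)(x)/a_n(I) > c\}$ forces $\mu(B) = 0$, because otherwise the integrand on $B$ would strictly exceed $c$, contradicting the inequality above. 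Hence $S_n(1_I)(x)/a_n(I) \le c$ for $\mu$-a.e.\ $x \in I$.

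For the second step, I would transport this bound to all of $X$ using that $I$ sweeps out. Fix $x \in X$ with $S_n(1_I)(x) \ge 1$; if $S_n(1_I)(x) = 0$ the bound is immediate since $c \ge 0$. Let $k_1$ be the smallest $k \in [0,n-1]$ with $T^k x \in I$ and set $y = T^{k_1} x \in I$. Every other $k \in [0,n-1]$ with $T^k x \in I$ satisfies $k \ge k_1$, and yields $T^{k-k_1}y \in I$ with $k - k_1 \in [0,n-1]$, so $S_n(1_I)(y) \ge S_n(1_I)(x)$. Let $N \subset I$ be the null set from step one; since $T$ is measure-preserving, $\bigcup_{k=0}^{n-1} T^{-k} N$ is also null. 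For any $x$ outside this union, the associated $y$ lies in $I \setminus N$, so
$$\frac{S_n(1_I)(x)}{a_n(I)} \le \frac{S_n(1_I)(y)}{a_n(I)} \le c,$$
giving $\|S_n(1_I)/a_n(I)\|_\infty \le c$.

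There is no serious obstacle: the first step is a standard Fubini/duality argument and the second is a one-line orbit comparison. The only point requiring care is the null-set bookkeeping in the second step, which is handled directly by $T$-invariance of $\mu$.
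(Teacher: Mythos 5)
Your proof is correct, and its second step (reducing the essential supremum over $X$ to the essential supremum over $I$ via the first hitting time of $I$) is exactly the paper's. Where you diverge is the first step. The paper bounds $S_n(1_I)(x)/a_n(I)$ for $x\in I$ by choosing a column $C_m$ deep enough that the level $J\ni x$ has at least $n$ levels above it, observing that then $1_I(T^kx)=\mu(I\cap T^kJ)/\mu(J)$ for $0\le k<n$ (each $T^kJ$ is still a level, hence contained in or disjoint from $I$), and applying the hypothesis with $B=J$; this uses the rank-one structure directly. You instead use the identity $\sum_{k=0}^{n-1}\mu(I\cap T^kB)=\int_B S_n(1_I)\,d\mu$, valid for every measurable $B\subset I$ by invariance of $\mu$, and test the resulting inequality $\int_B S_n(1_I)/a_n(I)\,d\mu\le c\,\mu(B)$ against the super-level set $B=\{x\in I: S_n(1_I)(x)/a_n(I)>c\}$ to conclude it is null. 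This Chebyshev-type duality argument is cleaner and strictly more general: it makes no use of rank-one at all, so your version of the proposition holds for an arbitrary invertible measure-preserving $T$ and an arbitrary set $I$ of positive finite measure satisfying \eqref{eqn:bound3}, whereas the paper's argument is tied to $I$ being a level and $B$ ranging over descendant levels. The only price is that your bound is a priori only almost everywhere on $I$, but your null-set bookkeeping via $\bigcup_{k=0}^{n-1}T^{-k}N$ handles this correctly, and an $L^\infty$ bound is all that \eqref{eqn:brebound} requires.
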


\begin{proof} We fix an $x$ in our space and show that 
\begin{equation}
\label{eqn:brebound1}
\frac{1}{a_n(I)} \sum_{k=0}^{n-1} 1_I \circ T^k(x) \leq c.
\end{equation}
We notice that it suffices to show \eqref{eqn:brebound1} holds for $x \in I$. For $x \not \in I$, we either have that $T^j (x) = y$ for some $y \in I$, $0 \leq j \leq n-1$ or there are no such $y$ and $j$. In the former case, we take the least such $j$ and have that $\frac{1}{a_n(I)} \sum_{k=0}^{n-1} 1_I \circ T^k(x) \leq \frac{1}{a_n(I)} \sum_{k=0}^{n-1} 1_I \circ T^k(y) \leq c$, and in the latter we have that $\frac{1}{a_n(I)} \sum_{k=0}^{n-1} 1_I \circ T^k(x) = 0.$

Now, for every $x \in I$, and a fixed $n \in \Nm$, we have that there exists a column $C_m$ for which if $J \ni x$ is a level in $C_m$, there are at least $n$ levels above $J$. Then, we see that $$\frac{1}{a_n(I)} \sum_{k=0}^{n-1} 1_I \circ T^k (x) = \frac{1}{a_n(I)} \sum_{k=0}^{n-1} \frac{\mu(I \cap T^k J)}{\mu(J)} \leq c.$$ Now, since \eqref{eqn:brebound1} holds for all $x \in I$ and therefore all $x \in X$, we have that \eqref{eqn:brebound} holds. 
\end{proof}

From this proposition and intermediate steps in our proofs of Theorems \ref{thm:wre} and \ref{thm:swre}, we can obtain a couple of simple corollaries, as Lemma \ref{lem:mapprox} gives us that \eqref{eqn:bound2} implies \eqref{eqn:bound3}.

\begin{corollary} All rank-one transformations with bounded cuts are boundedly rationally ergodic.
\end{corollary}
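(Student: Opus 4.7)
The plan is to obtain this corollary by stringing together three ingredients that have already been assembled earlier in the paper: the uniform estimate from the proof of Theorem \ref{thm:wre}, the passage from partial sums to Cesàro averages in Lemma \ref{lem:mapprox}, and the $L^\infty$ conversion in Proposition \ref{prop:supimp}. I will take $F = I$, the level in $C_0$, as the sweep-out set; it sweeps out by ergodicity of a rank-one transformation.

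First, I would note that the intermediate computation in the proof of Theorem \ref{thm:wre} already produces exactly what is needed. There, under the bounded-cuts hypothesis $r_m \leq K$ for all $m$, the chain of inequalities built from Lemma \ref{lem:counting} and the estimate $|D(I,m)|/|D(I,m-1)| = r_{m-1} \leq K$ establishes
\[
\sum_{k=0}^{M_m} \mu(I \cap T^k B_1) \;\leq\; \frac{4K}{\mu(I)}\,\mu(B_1) \sum_{k=0}^{M_{m-1}} \mu(I \cap T^k I)
\]
for every $B_1 \subset I$ and every $m$, i.e., condition \eqref{eqn:bound2} with the \emph{uniform} constant $c_0 = 4K/\mu(I)$ independent of $m$.

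Second, Lemma \ref{lem:mapprox} converts \eqref{eqn:bound2} into \eqref{eqn:bound1}, that is,
\[
\frac{1}{a_n(I)} \sum_{k=0}^{n-1} \mu(I \cap T^k B_1) \;\leq\; c_0\, \mu(B_1)\, \mu(I)^2
\]
for all $n \in \Nm$ and all $B_1 \subset I$. This is precisely the hypothesis \eqref{eqn:bound3} of Proposition \ref{prop:supimp}, with constant $c := c_0\,\mu(I)^2 = 4K\mu(I)$. Applying Proposition \ref{prop:supimp} for each $n$ with this same constant yields
\[
\sup_{n \geq 1} \left\| \frac{1}{a_n(I)}\, S_n(1_I) \right\|_\infty \;\leq\; 4K\,\mu(I) \;<\; \infty,
\]
which is exactly the bounded rational ergodicity condition \eqref{eqn:bre} with $F = I$.

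I do not expect any genuine obstacle here, since every ingredient is already in place; the one point that requires a moment of care is verifying that the constant $c_0$ produced by the Theorem \ref{thm:wre} argument is indeed independent of $m$, so that applying Proposition \ref{prop:supimp} termwise yields a bound that is uniform in $n$. This uniformity is immediate from the bounded-cuts hypothesis, which makes $r_{m-1} \leq K$ a single global bound rather than one that degrades with $m$.
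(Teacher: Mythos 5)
Your proposal is correct and follows exactly the route the paper intends: the paper derives this corollary from the uniform bound \eqref{eqn:bound2} with constant $4K/\mu(I)$ obtained in the proof of Theorem \ref{thm:wre}, passed through Lemma \ref{lem:mapprox} to get \eqref{eqn:bound3}, and then through Proposition \ref{prop:supimp} to get \eqref{eqn:bre}. Your added remark about checking that the constant is uniform in $m$ (hence in $n$) is the right point to be careful about, and it holds for the reason you give.
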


\begin{corollary} All rank-one transformations are subsequence boundedly rationally ergodic. 
\end{corollary}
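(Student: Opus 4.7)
The plan is to observe that this statement is a direct corollary of Proposition \ref{prop:supimp} combined with estimates that were already produced in the proof of Theorem \ref{thm:swre}. I would take $F = I$, the base level of $C_0$, which has positive finite measure and sweeps out because rank-one transformations are conservative ergodic, and I would work with the same subsequence $\{n_m\} = \{M_m + 1\}$ used in Theorem \ref{thm:swre}.

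The key step is to extract from the proof of Theorem \ref{thm:swre} the fact that the inequality \eqref{eqn:AB1bound} was established along this subsequence with a constant $c = 4/\mu(I)$ that is independent of $m$. Specializing $A$ to $I$ in that inequality yields precisely the hypothesis \eqref{eqn:bound3} of Proposition \ref{prop:supimp} for every choice $n = n_m$, with the same uniform constant $c$. This is the crucial point and requires only inspecting the computation leading to \eqref{eqn:sumbound}, where the ratio $|D(I,m)|/|D(I,m)| = 1$ makes the constant $4/\mu(I)$ automatic, with no assumption on the number of cuts.

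Next, I would apply Proposition \ref{prop:supimp} once for each $n_m$: its conclusion \eqref{eqn:brebound} gives
$$\left\| \frac{1}{a_{n_m}(I)} S_{n_m}(1_I) \right\|_\infty \leq c$$
for every $m$. Since $c$ does not depend on $m$, taking the supremum over $m$ yields
$$\sup_{m} \left\| \frac{1}{a_{n_m}(I)} S_{n_m}(1_I) \right\|_\infty \leq c < \infty,$$
which is precisely condition \eqref{eqn:bre} restricted to the subsequence $\{n_m\}$, i.e., subsequence bounded rational ergodicity along $\{M_m+1\}$ with witness set $I$.

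Because every ingredient is already in place, I do not anticipate a genuine obstacle; the only thing to verify carefully is that the constant extracted from the proof of Theorem \ref{thm:swre} is genuinely uniform in $m$, which it is since Lemma \ref{lem:counting} gives a two-sided bound that does not degrade as $m \to \infty$. This is precisely where the present argument improves on the proof of bounded rational ergodicity in Theorem \ref{thm:wre}: there the ratio $|D(I,m)|/|D(I,m-1)| = r_{m-1}$ forced a bounded-cuts assumption, whereas along the subsequence $n_m$ that ratio is replaced by $1$ and no hypothesis on the cuts is needed.
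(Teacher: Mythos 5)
Your proposal is correct and follows essentially the same route as the paper: the paper's proof likewise cites the bound \eqref{eqn:bound3} established along $\{M_m+1\}$ in the proof of Theorem \ref{thm:swre} and then invokes Proposition \ref{prop:supimp} to obtain \eqref{eqn:bre} along that subsequence. Your additional remarks (uniformity of the constant, and why no bounded-cuts hypothesis is needed along the subsequence) are accurate elaborations of the same argument, modulo only trivial bookkeeping of the exact constant.
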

\begin{proof} In the proof of Theorem \ref{thm:swre}, we showed that for any rank-one transformation $T$, we have that \eqref{eqn:bound3} holds for all $n \in \{M_m+1\}_{m \in \Nm}$ and $A, B \subset I$. Therefore, Proposition \ref{prop:supimp} gives us that \eqref{eqn:bre} holds for $n \in \{M_m + 1\}$, implying subsequence bounded rational ergodicity. 
\end{proof}

We notice that since bounded rational ergodicity implies rational ergodicity, both of the above results hold for rational ergodicity as well. 

\begin{remark}
As mentioned in the introduction, it was recently proven in \cite{AKW13} that all rank-one cutting and stacking transformations $T$ with a bounded number of cuts satisfy the property:
\begin{equation*}
\exists A, 0 < \mu (A) < \infty, \text{ for which }\sum_{|k|\leq n} 1_A(T^kx) \asymp a_n(A), \mbox{ a.e.},
\end{equation*}
where for positive sequences $a_n$ and $b_n$, $a_n \asymp b_n$ means the existence of $M>0$ such that $\frac{1}{M}<\frac{a_n}{b_n}<M$, $\forall n$ large. This property implies bounded rational ergodicity
\cite{AKW13}.  Moreover,  the above property for subsequences can be generalized for all rank-one transformations using the ideas of Propostion \ref{prop:supimp}, namely if $T$ is a rank-one transformation, then 
\begin{equation*}
\exists A, 0 < \mu (A) < \infty, \text{ for which } \sum_{|k|\leq n} 1_A(T^kx) \asymp a_n(A), \mbox{ a.e.}, \text{ for } n \in \kappa,
\end{equation*}
which implies subsequence bounded rationally ergodic along the subsequence $\kappa=\{M_m+1\}.$
\end{remark}

%%%%%%%%%%% GENERICITY %%%%%%%%%%%%%%%%%%%%%%%%%%%
\section{Genericity of Rank-One in Infinite Measure}
\label{sec:gen}

\subsection{Some Definitions and Preliminaries}
We first discuss the \textbf{weak topology} on the space of all invertible, measure-preserving transformations on $X = \Rm_{\geq 0}$, with Lebesgue measure, which we call $\mathcal{M}$. As mentioned in the introduction, in the finite measure case this result is well known (see e.g. \cite{Ki00} and the references therein). The first discussion of genericity properties in infinite measure is probably  in \cite{Sa71}.  We say that a sequence of transformations $\{T_n\}$ converges to $T$ if and only if $\{T_n A\}$ converges to $TA$ for all $A \subset X$ such that $\mu(A) < \infty$. That is, $\mu(T_n A \triangle TA) + \mu(T_n^{-1} A \triangle T^{-1} A) \rightarrow 0$. 

If $\{A_i\}$ is a dense collection of sets for the finite measure sets in the Borel sigma  algebra $\mathcal{B}$ of $X$, then define a metric on $\mathcal M$ as follows: 
\[d(T, S) = \sum_{i = 1}^\infty \frac{1}{2^i} \left(\mu(TA_i \triangle SA_i) + \mu(T^{-1} A_i \triangle S^{-1} A_i)\right).\]
This metric generates the weak topology on the space. For completeness, we explicitly define $\{A_i\}$ here. We define the sets in stages. In the first stage, we let $A_1$ be $[0,1)$. In the second stage, we let the next sixteen sets, $A_2, \ldots, A_{17}$, be all combinations of unions of the dyadic intervals $[0,1/2)$, $[1/2, 1)$, $[1, 3/2)$, $[3/2, 2)$. Inductively, in the $n$th stage, we suppose that all sets in the previous $n-1$ stages have been defined and we define the next $2^{n \cdot 2^{n-1}}$ sets $A_i$ to be all unions of combinations of dyadic intervals of length $\frac{1}{2^{n-1}}$ in $[0, n)$. Any finite union of dyadic intervals is in $\{A_i\}$, and the $\{A_i\}$ are dense in the collection of finite measurable sets in $\mathcal{B}$. We can verify that under this choice of $\{A_i\}$, the metric always returns a finite distance. 

We say that a transformation $T$ is a \textbf{cyclic permutation of rank} $k$ if $T$ is the identity on $[k, \infty)$ and there exists $I_k$ a dyadic interval in $[0,k)$ of length $1 /2^{k-1}$ such that $I_k, TI_k, T^2I_k, \ldots, T^{k \cdot 2^k - 1}I_k$ is a partition of $[0, k)$ consisting of disjoint half-open dyadic intervals of length $1 /2^{k-1}$. We call the set of cyclic permutations of rank $k$, $O_k$. 

\subsection{Genericity of Rank-ones}
We prove the genericity of rank-one transformations in $\mathcal{M}$ with the weak topology. 
%There are many equivalent definitions of a rank-one transformations. The one that we will use here is as follows. If $T$ is a transformations, we first define a \textbf{Rokhlin column} for $T$ as a sequence of disjoint sets $B, TB, \ldots, T^{h-1}B$. $B$ is called the base of the column, $h$ the height, and each $T^{i}B$ for $0\leq i \leq h-1$ a level of the column. 
We say that a (Rohklin) column approximates a measurable set of finite measure $A$ with accuracy $\varepsilon > 0$ if there exists $B^\prime$, a union of levels, such that $\mu(A \triangle B^\prime) < \varepsilon$. A sequence of columns $\{C_n\}$ approximates $A$ if for any $\varepsilon > 0$ there exists some $N_\varepsilon \in \Nm$ such that for all $n \geq N_\varepsilon$, $C_n$ approximates $A$ with accuracy $\varepsilon$. So $T$ is rank-one if and only If there exists a sequence of columns $\{C_n\}$ such that $\{C_n\}$ approximates every finite measurable set $A \subset X$.

\begin{theorem} The rank-one transformations are generic in $\mathcal{M}$ under the weak topology.
\end{theorem}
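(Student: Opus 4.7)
The plan is to prove that the rank-one transformations form a dense $G_\delta$ subset of $\mathcal{M}$ under the weak topology, which yields genericity. This splits into a $G_\delta$ step and a density step.

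For the $G_\delta$ step, using the density of $\{A_i\}$ among finite-measure Borel sets together with a diagonalization, one checks that $T$ is rank-one iff for every $m, k \in \Nm$ there is some Rokhlin column of $T$ approximating each of $A_1, \ldots, A_m$ within accuracy $1/k$. Let $R_{m,k}$ be the set of such $T$. To show $R_{m,k}$ is open, suppose $T \in R_{m,k}$ is witnessed by a column $B, TB, \ldots, T^{h-1}B$ with $B$ of finite measure and approximations strictly better than $1/k$. For $S$ sufficiently close to $T$ in the weak topology, induction on $i$ (using that each $T^i B$ has finite measure at each step) yields $\mu(S^i B \triangle T^i B)$ arbitrarily small for $i = 0, 1, \ldots, h-1$. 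After slightly shrinking $B$ to restore exact disjointness of $B, SB, \ldots, S^{h-1}B$, the same choice of unions of levels still approximates each $A_i$ within $1/k$, so $S \in R_{m,k}$.

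For density, I would first invoke the standard Rokhlin-type approximation result that $\bigcup_k O_k$ is weakly dense in $\mathcal{M}$: given any $T \in \mathcal{M}$, cyclic permutations of larger and larger rank can be chosen to converge weakly to $T$. Since the weak closure of the rank-one transformations is closed, it suffices to show every cyclic permutation lies in this closure. Given $T \in O_k$ and $\varepsilon > 0$, I would construct a rank-one $T'$ by cutting and stacking whose first column $C_0$ equals the cyclic orbit $I_k, T I_k, T^2 I_k, \ldots$ which partitions $[0, k)$. In subsequent stages I cut into many subcolumns and add spacers drawn successively from $[k, k+1), [k+1, k+2), \ldots$, arranging that every half-open dyadic subinterval of $[k, \infty)$ is eventually used as a spacer so that $T'$ is rank-one on all of $\Rm_{\geq 0}$. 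By choosing the first cut $r_0$ (and later cuts) sufficiently large, the set of points in $[0, k)$ where $T'$ differs from $T$ — those at the top of some subcolumn that get redirected to a spacer — has arbitrarily small measure, yielding $d(T, T') < \varepsilon$.

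The main obstacle is the density step. In the finite-measure case a cyclic permutation is itself rank-one, but in infinite measure one must simultaneously absorb all of $[k, \infty)$ into spacers (to obtain rank-one on the whole space) and keep the transformation close to $T$ on $[0, k)$. The bookkeeping required to control the cumulative measure of tops redirected to spacers across all cutting-and-stacking stages, and to verify that $d(T, T') < \varepsilon$ on the finite window of sets that determine the metric to within $\varepsilon$, is the technically delicate part of the argument.
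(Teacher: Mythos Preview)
Your $G_\delta$ step matches the paper's: both write the rank-one class as a countable intersection of sets $R_{m,k}$ (the paper collapses the two indices into one, taking $\mathcal{R}_j$ with $m=k=j$), and both prove openness by perturbing the witnessing Rokhlin column. The arguments are essentially the same.

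The density step is where you diverge. You aim to show that the rank-one transformations themselves are dense, by approximating each cyclic permutation $T\in O_k$ with a genuine cutting-and-stacking rank-one $T'$. The paper does something simpler: it never shows that rank-ones are dense directly. Instead it observes that for $k$ large enough, every $T\in O_k$ already lies in $\mathcal{R}_j$, because the cyclic orbit $I_k, TI_k,\ldots$ is a single Rokhlin column whose levels are exactly the dyadic intervals of length $1/2^{k-1}$ in $[0,k)$, and for large $k$ each of $A_1,\ldots,A_j$ is a union of such intervals. So $\bigcup_{k\ge K}O_k\subset\mathcal{R}_j$, and since the cyclic permutations are dense, each $\mathcal{R}_j$ is dense; Baire then gives density of $\mathcal{R}=\bigcap_j\mathcal{R}_j$. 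The point is that $\mathcal{R}_j$ only asks for \emph{one} column approximating finitely many sets, so it does not care that a cyclic permutation is the identity on $[k,\infty)$ and hence not rank-one on the whole space.

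Your route is workable but genuinely harder, and the obstacle you flag is real: for a fixed $T\in O_k$ the metric $d$ sees sets $A_i$ lying in $[k,\infty)$, where $T$ is the identity but your $T'$ shuffles spacers. You would need either to restrict to approximating by $O_{k'}$ with $k'$ large enough that the finitely many $A_i$ controlling $d$ to within $\varepsilon$ all sit inside $[0,k')$, or to arrange the spacers as consecutive short intervals so that $T'$ acts on $[k,M)$ as a translation by some small $\delta$, making $\mu(A_i\triangle T'A_i)$ small there. Either fix works, but the paper's approach bypasses the construction entirely.
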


\begin{proof} We  show that  the rank-one transformations contain a dense $G_\delta$ set in $\mathcal{M}$. To find this set,  define 
\begin{align*} 
\mathcal{R}_j := \{T \in \mathcal{M} : T & \text{ has a Rokhlin column that approximates } \\
& A_1, \ldots, A_j \text{ with accuracy } 1/j\}.
\end{align*}
We claim now that \[\mathcal{R} = \bigcap_{j=1}^\infty \mathcal{R}_j\] is our desired dense $G_\delta$ set. It suffices to show that each $T \in \mathcal{R}$ is rank-one and each $\mathcal{R}_j$ is open and dense. Then, since $\mathcal{M}$ is a Polish and therefore a Baire space (see e.g., \cite{Aa97}), $\mathcal{R}$ is also dense. (We note here that Danilenko has informed us that the proof in \cite{Da08s} 
that the finite measure-preserving rank-ones are $G_\delta$ can be adapted to the infinite measure-preserving case.)

Now let $T \in \mathcal{R}$; we show that it is rank-one. For each $j$, there exists a Rokhlin column $C_j$ for $T$, $B_j, TB_j, T^2 B_j, \ldots, T^{h_j-1}B_j$, that approximates $A_1,$ $ \ldots,$ $A_j$ with accuracy $1/j$. But then, given any finite measurable set $A \subset X$ and any $\varepsilon > 0$, we can find an $A_i$ such that $\mu(A \triangle A_i) < \varepsilon/2$. Now, choosing an $N > \max\{i, 2/\varepsilon\}$, we have that for all $j \geq N$, there exists some $B_j^\prime$ a finite union of levels of $C_j$ that approximates $A_i$ within $1/j < \varepsilon/2$ and therefore $A$ within $\varepsilon$. Hence, $T$ has a sequence of Rokhlin columns $\{C_j\}$ that approximate every finite measurable set within $\varepsilon$, showing that $T$ is rank-one. 

We next  fix an $\mathcal{R}_j$ and first show that it is dense. We know that the cyclic permutations $\bigcup_{k=1}^\infty O_k$ are dense (\cite{Sa71}), where $O_k$ is the set of cyclic permutations of rank $k$. Here, a cyclic permutation is a transformation $T$ that is the identity on $[k,\infty)$ and the sets $I_k, TI_k, \ldots, T^{k \cdot 2^k} I_k$ is a partition of $[0, k)$ into sets of measure $1 /2^k$. 

As $A_1, \ldots, A_j$ are finite unions of dyadic intervals, there exists a $K$ such that for all $k \geq K$, each $A_i, 1 \leq i \leq j$ may be written as a finite union of the dyadic intervals $$[0, 1 /2^k), [1 /2^k, 2 / 2^k), \ldots, [k - 1 /2^k, k).$$ But for any $T \in O_k$, there exists a Rokhlin column $I_k, TI_k, T^2I_k,$ $ \ldots, $ $T^{k \cdot 2^k - 1}I_k$ of exactly these dyadic intervals. Hence, each $A_i$ may be written as a union of the levels in this column, so $O_k \subset \mathcal{R}_j$ for all $k \geq K$. Since all but finitely many of the cyclic permutations are in any $\mathcal{R}_j$, $\mathcal{R}_j$ contains a dense set, so is dense. 

To   show  each $\mathcal{R}_j$ is open in $\mathcal{M}$ let $T \in \mathcal{R}_j$. Thus, there exists a Rokhlin column $C_j$ for $T$ that approximates each $A_1, \ldots, A_j$ within some $a < 1/j$. We let the levels be $B, TB, T^2B, T^{h-1}B$. The idea now is that we want to find a $\delta$ small enough such that if $S \in B(T, \delta)$, then $\mu(T(L) \triangle S(L))$ for each level $L$ of $C_j$ is small enough that we can find another sequence of levels $C, SC, S^2C, \ldots, S^{h-1}C$ where each $S^i C$ is close to $T^i B$. Therefore, the new Rokhlin column, which we call $C_j^\prime$, consisting of $C, SC, S^2C,$ $ \ldots,$ $ S^{h-1} C$ still approximates $A_1, \ldots, A_j$ within $1/j$. 

To prove this is an exercise in approximation. If we let $\varepsilon = 1/j - a$ and had that $\mu(S^i C \triangle T^iB) < \varepsilon / h$ for all $i$, then we could approximate $A_i$ within $a + \varepsilon = 1/j$ by $C_j^\prime$ as we may approximate any $A_i$ within $a$ by $C_j$. 

Now, if we had that 
\begin{equation}
\label{eqn:levelbound}
\mu(S(L) \triangle T(L)) < \frac{\varepsilon}{h}
\end{equation}
 for all levels $L$ of $C_i$ but the top one, then letting $C_0 = B, C_1 = SC_0 \cap TB, C_2 = SC_1 \cap T^2B, \ldots$, all the way up to $C_{h-1} = SC_{h-2} \cap T^{h-1} B$, we could define $S^{h-1} C = C_{h-1}$. Since $S$ is invertible, we could then define $S^i C = S^{i-h+1} C_{n-1}$ for all $0 \leq i \leq h-1$ and notice that $S^{i} C \subset T^iB$ for all such $i$. Furthermore, these levels would be disjoint and of the same measure. 
 
In particular, from \eqref{eqn:levelbound}, we know that the measure of $S^{h-1} C \subset T^{h-1}B$ is at least 
$$ \mu(B)-h\cdot \frac{\varepsilon}{h}= \mu(B) - \varepsilon,$$
as $\mu(SC_i \cap T^{i+1} B) = \mu(SC_i \cap T(T^iB)) \geq \mu(C_i) - \epsilon/h$ for all $i$. Then, as $S^i C \subset T^i B$ for all $i$ and $S$ is measure-preserving, we have that $\mu(S^i C \triangle T^i B) < \epsilon$ for $0 \leq i \leq h-1$. 

Finally, we need to show that we can obtain the bound in \eqref{eqn:levelbound}, that 
$$\mu(S(L) \triangle T(L))< \left(\frac{\varepsilon}{h}\right)$$
for all levels $L$ of $C_i$ and for all $S \in B(T, \delta)$ for small enough $\delta$. We fix a level $L$ of $C_i$. Then, we can approximate $L$ within $\varepsilon/4$ by a finite union of dyadic intervals $D_1, \ldots, D_n$. We note that each $D_i = A_{l_i}$ for some $l_i$. Furthermore,by choosing $\delta < \min_{1 \leq i \leq n} \{ \frac{\varepsilon}{2^{l_i+1}n}\}$, we have that for $S \in B(T, \delta)$, $$\mu(T(D_i) \triangle S(D_i)) < \frac{1}{2n} \varepsilon.$$ Hence, we have that 
$$\mu(T(L) \triangle S(L)) < \sum_{i=1}^n \mu(T(D_i) \triangle S(D_i)) + 2\mu\left(L \backslash \bigcup_{i=1}^n D_i\right) < \varepsilon$$ 
for $S \in B(T, \delta)$. For each level $L_i$ there exists such a $\delta_i$, and choosing $\delta = \min \{\delta_i\}$, the above inequality holds for each level $B, TB, \ldots, T^{h-1}B$ of $C_i$ for all $S \in B(T, \delta)$, and we have the bound \eqref{eqn:levelbound} as desired. Hence, $B(T, \delta) \subset \mathcal{R}_j$, showing that each $\mathcal{R}_j$ is open. 
\end{proof}

\subsection{Existence of a Rank-one Transformation that is not Weakly Rationally Ergodic}

Aaronson proved  that weak rational ergodicity is meager in the space of measure-preserving, invertible transformations under the weak topology (\cite{Aa12},  proof of Theorem F). But as  rank-one transformations are generic in the same space, we have the following corollary: 

\begin{corollary} There exist rank-one transformations that are not weakly rationally ergodic. 
\end{corollary}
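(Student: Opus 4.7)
The plan is to invoke the Baire category theorem in the space $\mathcal{M}$ of invertible measure-preserving transformations on $X=\Rm_{\geq 0}$ equipped with the weak topology. The ingredients are already in place: the preceding theorem exhibits a dense $G_\delta$ subset $\mathcal{R}\subset\mathcal{M}$ consisting entirely of rank-one transformations, and Aaronson's Theorem F in \cite{Aa12} asserts that the collection $\mathcal{W}\subset\mathcal{M}$ of weakly rationally ergodic transformations is meager in $\mathcal{M}$ under the weak topology.

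First I would recall that $\mathcal{M}$, being a Polish space (see e.g.\ \cite{Aa97}), satisfies the Baire category theorem. Then, in a Baire space, a dense $G_\delta$ set is comeager, and the complement of a meager set is comeager. The intersection of two comeager sets in a Baire space is comeager, and in particular nonempty. Applying this to $\mathcal{R}$ and $\mathcal{M}\setminus\mathcal{W}$, I would conclude that
\[
\mathcal{R}\cap(\mathcal{M}\setminus\mathcal{W})\neq\emptyset,
\]
that is, there exists a rank-one transformation that is not weakly rationally ergodic.

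There is no real obstacle here: the work has already been done in proving genericity of rank-ones and (by Aaronson) meagerness of $\mathcal{W}$. The only mildly subtle point worth noting explicitly is the compatibility of the ambient space: the genericity theorem just proved and Aaronson's meagerness theorem both take place in $\mathcal{M}$ with the weak topology on transformations of a $\sigma$-finite infinite Lebesgue measure space, so no change of model is required. Since the argument is a one-line category argument from already established results, I would present it as a short corollary rather than a separate theorem.
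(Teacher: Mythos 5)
Your proposal is correct and is exactly the argument the paper uses: combine the genericity (dense $G_\delta$, hence comeager) of rank-one transformations in $\mathcal{M}$ with Aaronson's result that the weakly rationally ergodic transformations are meager, and conclude the intersection of the two comeager sets is nonempty. The paper presents this in one sentence immediately before the corollary, citing the proof of Theorem F in \cite{Aa12}; your write-up just makes the Baire category bookkeeping explicit.
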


With this, we have shown that all rank-ones are subsequence boundedly rationally ergodic, and weakly rationally ergodic on levels, but that there exist rank-one transformations that are not weakly rationally ergodic for all sets.

%%%%%%%%%% ZERO TYPE AND MULTIPLE RECURRENCE %%%%%%%%%%%%%%%
\section{Independence of the Zero Type Property and Multiple Recurrence in Rank-One}
 \label{sec:ztmr}

In this section, we show that the zero type property and multiple recurrence are independent. We start with some definitions. Recall  that $T$ is $k$-\textbf{recurrent} if for an $A \subset X, \mu(A) > 0$, there exists an $n \in \Nm$ such that $$\mu(A \cap T^{n}A \cap T^{2n}A \cap \ldots \cap T^{(k)n}A) > 0.$$ Finally, we say that $T$ is \textbf{multiply recurrent} if it is $k$-recurrent for all $k \in \Nm$. 

Furthermore, we say that a transformation $T$ on a space $X$ is of \textbf{zero type} if $\mu(A \cap T^{k} A) \rightarrow 0$ for all $A \subset X$ of finite measure. For conservative ergodic transformations, it suffices to check that this property holds for one set $A$ of finite positive measure. 

Finally, we note that our constructions will rely on the notion of steepness. We say that a rank-one transformation is \textbf{steep} if $t_{i+1} \geq 5 t_i$ for every pair of successive $t_i, t_{i+1}$ in $H = \bigcup_{j=0}^\infty H_j \backslash \{0\}$.

\subsection{Multiply Recurrent, Zero Type}

A non-rank-one example is given by an infinite ergodic index Markov shift, it is zero type 
and multiply recurrent by \cite{AN00}.
For a   rank-one example we begin with $C_0$, the unit interval. Then, for each $n \geq 0$, we let $r_n = n+2$, and add the appropriate number of spacers above each subcolumn to guarantee the following sets of heights. We never add spacers on the first subcolumn. For each $n$, we want 
\begin{align*} 
H_n = \{0\} & \bigcup \left\{5^{\frac{n(n+1)}{2} }, 2 \cdot 5^{\frac{n(n+1)}{2} }, \ldots,  \lceil \sqrt{n}\rceil  \cdot 5^{\frac{n(n+1)}{2} }\right\} \\ & \bigcup \left\{ 5^{\frac{n(n+1)}{2} + \lceil\sqrt{n}\rceil }, 5^{\frac{n(n+1)}{2} + \lceil\sqrt{n}\rceil + 1 }, \ldots 5^{\frac{n(n+1)}{2} + n}\right\}.
\end{align*}
We notice that in this construction, we place no spacers on the first $\lceil \sqrt{n} \rceil$ subcolumns in $C_n$ and have a form of steepness on the remainder of the subcolumns. 

\begin{comment}
For reference, we list the first few sets of $H_n$ below:
\begin{align*} 
H_0 & = \{0, 5^0\} \\
H_1 & = \{0, 5^1, 5^2\} \\
H_2 & = \{0, 5^3, 2 \cdot 5^3, 5^5\} \\
H_3 & = \{0, 5^6, 2 \cdot 5^6, 5^8, 5^9\} \\
H_4 & = \{0, 5^{10}, 2 \cdot 5^{10}, 5^{12}, 5^{13}, 5^{14}\} \\
H_5 & = \{0, 5^{15}, 2 \cdot 5^{15}, 3 \cdot 3^{15}, 5^{18}, 5^{19}, 5^{20}\}.
\end{align*}
\end{comment}

We claim that this transformation is both zero type and multiply recurrent. Rank-one zero type transformations 
have been constructed in e.g. \cite{AdFrSi97}, \cite{GHPSW03},  \cite{DaRy11}, \cite{Ry11}, \cite{DGPS}. We also note that Danilenko has informed us that the high staircases (the tower staircases in \cite{BoFiMaSi01}) that are shown to be of zero type (mixing) in 
\cite{DaRy11} can be shown to be multiply recurrent.

\begin{proposition} $T$ as defined above is zero type. 
\end{proposition}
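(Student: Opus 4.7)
The aim is to show $\mu(I \cap T^k I) \to 0$ as $k \to \infty$, where $I = C_0$ is the unit interval; since $T$ is rank-one, and hence conservative ergodic, checking the zero-type condition on this single set of positive finite measure suffices. By Lemma~\ref{lem:approx}, for each $k$ and each $\varepsilon > 0$ we can select $N$ so large that $\mu(I \cap T^k I)$ differs from $w_N \cdot |D(I,N) \cap (k+D(I,N))|$ by less than $\varepsilon$. Since $w_N |D(I,N)| = \mu(I)$, it suffices to show that
\[
\frac{|D(I,N) \cap (k + D(I,N))|}{|D(I,N)|} \longrightarrow 0 \quad \text{as } k \to \infty,
\]
with $N$ chosen sufficiently large as a function of $k$.

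Set $q_n := 5^{n(n+1)/2}$, the smallest nonzero element of $H_n$. The construction produces the key ``super-steepness'' estimate $q_{n+1} = 5 \max(H_n) \ge 2\sum_{i \le n}\max(H_i)$. Given $k > 0$, let $n = n(k)$ be the unique integer with $q_n \le 2k < q_{n+1}$. Using the unique $H_0 \oplus \cdots \oplus H_{N-1}$-decomposition $\bar a = \sum a_i$, $\bar b = \sum b_i$ of each pair, my first step is to show that $a_i = b_i$ for all $i > n$: if $j$ were the largest differing index, then
\[
|a_j - b_j - k| = \Bigl|\sum_{i < j}(a_i - b_i)\Bigr| \le 2\sum_{i < j}\max(H_i) \le \tfrac{1}{2}q_j,
\]
while any nonzero element of $H_j - H_j$ has magnitude at least $q_j$; this forces $q_j \le 2k$, so $j \le n$. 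Therefore
\[
|D(I,N) \cap (k + D(I,N))| = \Bigl(\prod_{i > n}|H_i|\Bigr) \cdot \alpha_n(k),
\]
where $\alpha_n(k) := |\{(\bar a, \bar b) \in (H_0 \oplus \cdots \oplus H_n)^2 : \bar a - \bar b = k\}|$, and the ratio to bound becomes $\alpha_n(k)/\prod_{i \le n}|H_i|$.

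To bound $\alpha_n(k)$ I would iterate the ``largest differing index'' argument within the lower tower. The explicit structure of $H_j = \{0\} \cup \{q_j, 2q_j, \ldots, \lceil\sqrt{j}\rceil q_j\} \cup \{5^{\lceil\sqrt{j}\rceil}q_j, \ldots, 5^j q_j\}$ (an arithmetic part of length $\lceil\sqrt{j}\rceil$ together with a steep geometric part) yields two crucial bounds: first, any interval of length $q_j$ contains only a bounded number of elements of $H_j$ (and of $H_j - H_j$), so the residual constraint leaves only $O(1)$ admissible values of $D_j := a_j - b_j$ at each level; second, a direct case analysis shows that the multiplicity $m_j(v) := |\{(a_j,b_j) \in H_j^2 : a_j - b_j = v\}|$ satisfies $m_j(v) \le \lceil\sqrt{j}\rceil + 1$ for every $v \ne 0$, while $m_j(0) = |H_j|$. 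Partitioning pairs by their active set $S := \{j : a_j \ne b_j\}$ and summing, one obtains an estimate of the form
\[
\frac{\alpha_n(k)}{\prod_{i \le n}|H_i|} \le C \cdot \frac{\lceil\sqrt{n}\rceil + 1}{|H_n|} = O\bigl(1/\sqrt{n}\bigr),
\]
since $n$ must lie in every valid active set $S(k)$, the product $\prod_{j \in S}m_j(D_j)/|H_j|$ is maximized when $|S| = 1$ (each additional active level multiplies by a factor $\le 1$), and there are only boundedly many valid decompositions. As $k \to \infty$, $n(k) \to \infty$, so the ratio tends to $0$.

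The main obstacle is the final step: one must control how many valid decompositions of a given $k$ exist (so that summing over them does not blow up the bound) and verify that the multiplicity product at each level really is at most $\lceil\sqrt{j}\rceil + 1$. The factor $5$ in $q_{n+1}/\max(H_n) = 5$ is precisely what makes the interval $[k' - q_j/2,\, k' + q_j/2]$ contain only $O(1)$ elements of $H_j - H_j$ at each iteration and what keeps the decomposition ``rigid,'' preventing the recursive counting from exploding and thereby ensuring the $O(1/\sqrt{n})$ decay above.
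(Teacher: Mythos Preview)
Your approach and the paper's share the two essential ingredients---the steepness of the $H_n$ and the multiplicity bound $m_j(v)\le\lceil\sqrt j\rceil$ for $v\neq 0$---but the paper avoids your ``main obstacle'' entirely by proving one clean fact up front: the representation $k=\sum_i d_i$ with $d_i\in H_i-H_i$ is \emph{unique}. Your own ``largest differing index'' argument already yields this. If $\sum_i d_i=\sum_i d_i'$ and $j$ is the largest index with $d_j\neq d_j'$, then $d_j-d_j'$ is a nonzero multiple of $q_j$, so $|d_j-d_j'|\ge q_j$, while $|d_j-d_j'|=\bigl|\sum_{i<j}(d_i'-d_i)\bigr|\le 2\sum_{i<j}\max(H_i)<q_j/2$, a contradiction. (The paper phrases this via the unique balanced base-$5$ expansion with digits in $\{0,\pm1,\pm2\}$: each $d_n$ occupies the digit block at positions $n(n+1)/2,\ldots,(n+1)(n+2)/2-1$.) With uniqueness one has the exact product formula
\[
\mu(I\cap T^kI)=\prod_{i=0}^{N-1}\frac{m_i(d_i)}{|H_i|},
\]
and since for large $k$ some $d_i$ with $i\ge n(k)$ is nonzero, one factor is at most $\lceil\sqrt i\rceil/|H_i|=O(i^{-1/2})$. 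So there is exactly one ``valid decomposition,'' and your iterated interval argument would have shown this too had you pushed it: the open interval $(k'-q_j/2,\,k'+q_j/2)$ of length $q_j$ contains at most one multiple of $q_j$, hence at most one admissible $D_j$ at each stage, not merely $O(1)$. A minor point: since this $T$ is normal, the paper invokes Lemma~\ref{lem:normal} to get an equality rather than the $\varepsilon$-approximation of Lemma~\ref{lem:approx}.
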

\begin{proof} We can easily see that $T$ is conservative and ergodic as it is a cutting and stacking transformation. Therefore, to show that it is zero type, it suffices to check that $\mu (A \cap T^n A) \rightarrow 0$ for one set $A$ of finite measure. We will prove this for $I = (0,1)$, the unit interval. 

By construction, $T$ is normal. Therefore, by Lemma \ref{lem:normal} we have that for large enough $N$, $$\mu(I \cap T^k I) = \frac{|D(I, N) \cap (k + D(I, N))|}{|D(I,N)|}.$$ We then have that $|D(I, N) \cap (k + D(I, N))|$ counts the number of representations 
\begin{equation}
\label{eqn: rep}
k = \sum_{i=0}^{N-1} (e_i - e_i^\prime), 
\end{equation}
where $e_i, e_i^\prime \in H_i$. We claim that if $$\sum_{i=0}^{N-1} (f_i - f_i^\prime) = \sum_{i=0}^{N-1} (e_i - e_i^\prime),$$ and $f_i, f_i^\prime, e_i, e_i^\prime \in H_i$, then $f_i - f_i^\prime = e_i - e_i^\prime$ for all $i$. 

To see this, we consider for each $n$ its difference set $D_n := \{e_n - e_n^\prime: e_n, e_n^\prime \in H_n\}$. We notice that for each $d_n \in D_n$, $d_n \equiv 0 \pmod{5^{\frac{n(n+1)}{2}}}$. Furthermore, for $d_n \in D_n$ and $d_{n+1} \neq 0 \in D_{n+1}$, we have that $|d_{n+1}| \geq 5 \cdot |d_n|$. This steepness of difference sets gives us unique representation by sums of differences. We notice that any $k$ has a unique representation as $\sum_{i=0}^{N-1} \varepsilon_i \cdot 5^i$ where $\varepsilon_i \in \{0, \pm 1, \pm 2\}$ for $N$ large enough. Then if $k$ has a representation \eqref{eqn: rep}, we must have $$\sum_{i=\frac{n(n+1)}{2} }^{\frac{(n+1)(n+2)}{2} - 1} \varepsilon_i \cdot 5^i = d_n,$$ where $d_n \in D_n$ and so the representation $k = \sum_{n=0}^{N-1} d_n$ is unique. Then, we have that $$|D(I, N)| = \prod_{i=0}^{N-1} |H_i|,$$ so for a fixed $k = \sum_{i=0}^{N-1} (d_i - d_i^\prime),$ we have that 
\begin{align*} 
\mu (I \cap T^k I) & = \prod_{i=0}^{N-1}\frac{|\{(e_i, e_i^\prime) \in H_i \times H_i : e_i - e_i^\prime = d_i - d_i^\prime\}|}{|H_i|}
\end{align*}

We see that if $d_i - d_i^\prime = 0$, then $|\{(e_i, e_i^\prime) \in H_i \times H_i : e_i - e_i^\prime = d_i - d_i^\prime\}| = |H_i|$. Else, we have that $|\{(e_i, e_i^\prime) \in H_i \times H_i : e_i - e_i^\prime = d_i - d_i^\prime\}| \leq \lceil \sqrt{i} \rceil.$ Now, for any $k \geq M_n$, we see that we must have $d_i - d_i^\prime \neq 0$ for at least one $i \geq n$. Hence, we have that $$\mu (I \cap T^k I) \leq \frac{\lceil \sqrt{i} \rceil}{i},$$ which goes to $0$ as $k \rightarrow \infty$. 
\end{proof}

We also claim that $T$ is multiply recurrent. Before we prove this result in the next proposition, we remark that $T$ is clearly multiply recurrent on levels by construction since in each column $C_n$, there are no spacers on the first $\lceil \sqrt{n} \rceil$ columns, and $\lceil \sqrt{n} \rceil \rightarrow \infty$ as $n \rightarrow \infty$. 

We say that a level $J$ is $(1-\delta)$-\textbf{full} of $A$ if $\mu(J \cap A) > (1-\delta) \cdot \mu(J)$. For any measurable $A$ of finite measure, we can find a level $J$ that is $(1-\delta)$-full for $A$ for any $\delta < 1$. Furthermore, if $J$ is a level in $C_n$ that is $(1-\delta)$-full of $A$, then for each $m \geq n$, $J$ has a descendant in $C_m$ that is also $(1 - \delta)$-full of $A$. In addition, we say that a level $J$ is $(1-\delta)$-\textbf{empty} of $A$ if $\mu(J \cap A) \leq (1-\delta) \cdot \mu(J)$. 

\begin{proposition} $T$ is multiply recurrent. 
\label{prop:mr}
\end{proposition}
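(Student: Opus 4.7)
\textit{Plan.} Given $A$ with $0 < \mu(A) < \infty$ and $k \geq 1$, I will exhibit $n$ for which $\mu\bigl(\bigcap_{j=0}^{k}T^{-jn}A\bigr) > 0$ by approximating $A$ with a $(1-\delta)$-full level $J$ of a carefully chosen column $C_M$ and then exploiting the fact that the first $\lceil\sqrt{M}\rceil + 1$ subcolumns of $C_M$ are stacked inside $C_{M+1}$ without any intervening spacers.

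First, for a small parameter $\delta > 0$ to be fixed, the hypothesis that $A$ has positive finite measure produces a level $J_0 \subset C_{M_0}$ which is $(1-\delta)$-full of $A$. Using the descendant property of $(1-\delta)$-fullness stated just above, I replace $J_0$ by a $(1-\delta)$-full descendant $J \subset C_M$ for some $M \geq \max(M_0, k^2)$, which ensures $r := \lceil\sqrt{M}\rceil \geq k$.

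Setting $n := h_M$, the level $J$ decomposes in $C_{M+1}$ as $J = \bigsqcup_{i=0}^{M+1} \tilde J_i$, where $\tilde J_i$ lies in the $i$-th subcolumn of $C_M$ (viewed inside $C_{M+1}$) and $\mu(\tilde J_i) = \mu(J)/(M+2)$. Because no spacers appear between the first $r+1$ subcolumns, $T^n \tilde J_i = \tilde J_{i+1}$ for $i < r$, so $T^{jn} \tilde J_i = \tilde J_{i+j} \subset J$ whenever $0 \leq i \leq r-k$ and $0 \leq j \leq k$. For each such $i$, inclusion-exclusion gives
\[
\mu\Bigl(\tilde J_i \cap \bigcap_{j=0}^{k}T^{-jn}A\Bigr) \;\geq\; \mu(\tilde J_i) - \sum_{j=0}^{k}\mu(\tilde J_{i+j}\setminus A).
\]
Summing over $i \in \{0,\ldots,r-k\}$, the disjointness of the $\tilde J_i$'s together with the estimate $\sum_{\ell=0}^{r}\mu(\tilde J_\ell \setminus A) \leq \mu(J\setminus A) < \delta\mu(J)$ yields
\[
\mu\Bigl(\bigcap_{j=0}^{k}T^{-jn}A\Bigr) \;\geq\; \bigl[(r-k+1) - (k+1)\delta(M+2)\bigr]\,\mu(\tilde J_0),
\]
which is strictly positive provided $(r-k+1) > (k+1)\delta(M+2)$, i.e.\ roughly $\delta < 1/((k+1)\sqrt{M})$.

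The main obstacle is coordinating $\delta$ and $M$: the approximation step forces $M \geq M_0(A,\delta)$, which in general grows as $\delta$ shrinks, while the positivity requirement forces $M < 1/((k+1)\delta)^2$. I would reconcile these by invoking the martingale-type observation that for almost every $x \in A$ the density ratio $\mu(J^M_x \cap A)/\mu(J^M_x)$ tends to $1$ as $M \to \infty$, where $J^M_x$ denotes the level of $C_M$ containing $x$; along a suitable subsequence of $M$, the associated fullness parameter $\delta_M$ will satisfy $\delta_M\sqrt{M} < 1/(k+1)$, yielding a compatible $(M,\delta)$ pair and hence completing the proof.
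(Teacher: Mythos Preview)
Your computation up through the displayed lower bound
\[
\mu\Bigl(\bigcap_{j=0}^{k}T^{-jn}A\Bigr) \;\geq\; \bigl[(r-k+1) - (k+1)\delta(M+2)\bigr]\,\mu(\tilde J_0)
\]
is fine, and you correctly identify the real obstacle: you need $\delta \lesssim 1/((k+1)\sqrt{M})$, while the approximation step only hands you a $(1-\delta)$-full level in some column $C_{M_0(A,\delta)}$, with no control on how $M_0$ grows as $\delta \to 0$. The gap is in your proposed resolution. The martingale/Lebesgue-density fact you invoke is true --- for a.e.\ $x\in A$ one has $\delta_M(x):=\mu(J^M_x\setminus A)/\mu(J^M_x)\to 0$ --- but it carries \emph{no rate}. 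Pointwise convergence $\delta_M(x)\to 0$ does not imply $\liminf_M \delta_M(x)\sqrt{M}=0$; for a given $A$ the sequence $\delta_M(x)$ may well satisfy $\delta_M(x)\geq c/\sqrt{M}$ for all large $M$ and a.e.\ $x$. So the sentence ``along a suitable subsequence of $M$, the associated fullness parameter $\delta_M$ will satisfy $\delta_M\sqrt{M}<1/(k+1)$'' is exactly the missing step, and nothing you have written justifies it.

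The paper sidesteps this coordination problem entirely by \emph{fixing} $\delta=1/(4k)$ once and for all and then iterating over columns. Having found a single $\tfrac{4k-1}{4k}$-full level $J$ in some $C_n$ with $n\geq k^2$, one does not try to make the whole of $J$ work at stage $n$; instead, for each $m\geq n$ one looks at the portion $J\cap A_m$ of $J$ lying in the first $\lceil\sqrt m\rceil$ subcolumns of $C_m$. Your inclusion--exclusion / pigeonhole argument (with $k$ consecutive subcolumns) shows that if $J\cap A_m$ is $\tfrac{2k-1}{2k}$-full of $A$ for \emph{some} $m\geq n$, then $A$ is $k$-recurrent. The key new ingredient is that the complements $B_m$ shrink multiplicatively: the proportion of $J$ avoiding $A_n\cup\cdots\cup A_N$ is
\[
\prod_{m=n}^{N}\frac{m-\lceil\sqrt m\rceil}{m}\ \longrightarrow\ 0 \qquad (N\to\infty),
\]
since $\sum_m \lceil\sqrt m\rceil/m=\infty$. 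Thus the regions $J\cap A_m$ collectively exhaust almost all of $J$, and a counting argument forces one of them to be $\tfrac{2k-1}{2k}$-full; otherwise $J$ itself would be $\tfrac{4k-1}{4k}$-empty, a contradiction. In short: rather than demanding a very full level at one scale (which requires an unavailable rate), the paper uses a modestly full level and lets the divergence of $\sum \lceil\sqrt m\rceil/m$ do the work across many scales.
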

\begin{proof} 
Let $A \subset X$ be of positive measure; we will show it is $k$-recurrent for all $k$. Fix $k \geq 1$. For $n$, we let $A_n$ be the subset of column $C_n$ in the first $\lceil \sqrt{n} \rceil$ subcolumns after cutting, and $B_n$ be the rest the column. 

% Unnecessary picture
\begin{comment}
\begin{center}
\begin{tikzpicture}[scale = 0.6]
\draw [fill = lightgray] (0,0) to (3,0) to (3,4) to (0,4) to (0,0);
\draw (2,0) to (2,4);
\draw (1,0) to (1,4);
\draw [fill = gray] (3,0) to (4,0) to (4,5) to (3,5) to (3,0);
\draw [fill = gray] (4,0) to (5,0) to (5,6) to (4,6) to (4,0);
\draw [fill = gray] (5,0) to (6,0) to (6,7) to (5,7) to (5,0);
\draw [fill = gray] (6,0) to (7,0) to (7,8) to (6,8) to (6,0);
\draw (3,4) to (7,4);
\node[above left] at (-.5, 3.5) {$A_n$};
\node[above right] at (7.5, 5.5) {$B_n$};
\draw [->, thick] (-.5, 3.5) to (1.5, 2);
\draw [->, thick] (7.5, 5.5) to (5.5, 3);
\end{tikzpicture}
\end{center}
\end{comment}

For any $\delta$, we may find a level $J$ in some column $C_n$, $(1-\delta)$-full of $A$, for a $\delta$ we will choose in the future. We may assume that $n \geq k^2$, since if $n < k^2$, we could find a descendent of $J$ in $C_{k^2}$, still $(1-\delta)$-full of $A$, and use that decedent as $J$ instead. 

We claim that if $J \cap A_n$ is $\frac{2k-1}{2k}$ full of $A$, then $\mu(A \cap T^{h_n} A \cap \cdots \cap T^{(k-1)h_n}A) > 0$. To see this, we let $J_i$ be the segment of $J$ in the $i$th subcolumn and we break up the interval $J \cap A_n$ into segments, $\bigcup_{i=0}^{k}J_i, \bigcup_{i=k+1}^{2k} J_i, \ldots$, where the last segment will be $\bigcup_{i=sk+1}^{\lceil \sqrt{n}\rceil} J_i$ for the largest $s$ such that $sk+1 < \lceil \sqrt{n}\rceil $. 

If $J \cap A_n$ is $\frac{2k-1}{2k}$ full of $A$, then by a pigeonhole argument on these segments, at least one of the segments, $\bigcup_{i=0}^{k}J_i, \bigcup_{i=k+1}^{2k} J_i, \ldots \bigcup_{(s-1)k+1}^{sk} J_i$ will be $\frac{k-1}{k}$-full of $A$. We suppose without loss of generality that $I := \bigcup_{i=0}^{k}J_i$ is $\frac{k-1}{k}$-full of $A$. But then, $\mu((I \cap A) \cap T^{h_n} (I \cap A) \cap \cdots \cap T^{(k-1)h_n}(I \cap A)) > 0$ by a pigeonhole argument, so $A$ is $k$-recurrent. 

We suppose now that we chose our original $J$ such that $J$ is $\frac{4k-1}{4k}$-full of $A$ and $J$ is in $C_n$ for some $n \geq k^2$. We suppose for sake of contradiction that $J \cap A$ is not $k$-recurrent. Then, by our previous discussion, $J \cap A_m$ must be $\frac{2k-1}{2k}$-empty of $A$ for all $m \geq n$. Furthermore, we see that the proportion of $J$ not in $A_n \cup \ldots \cup A_{N}$ is 
$$B_{n, N} := \left(\frac{n-\lceil \sqrt{n}\rceil}{n}\right) \left( \frac{n+1 - \lceil \sqrt{n+1}\rceil}{n+1}\right)\cdots \left(\frac{N-\lceil \sqrt{N}\rceil}{N}\right) .$$ But we have that $\lim_{N \rightarrow \infty} B_{n,N}$ is 
\begin{align*} 
\prod_{k=n}^\infty \frac{n - \lceil \sqrt{n} \rceil}{n} \leq \prod_{k \geq n, k \text{ a square}} \frac{k - \lceil \sqrt{k}\rceil }{k} &=  \prod_{m=\lceil \sqrt{n} \rceil}^\infty \frac{m^2-m}{m^2}\\  &= \prod_{m=\lceil \sqrt{n} \rceil}^\infty \frac{m-1}{m},
\end{align*} 
which goes to $0$. Hence, for any $\varepsilon > 0$, we can find some $N$ such that $B_{n,N} < \varepsilon$. 

Then, since $A_n, \ldots, A_N$ are all $\frac{2k-1}{2k}$-empty of $A$, we have that $A_n \cup \ldots \cup A_N$ is $\frac{2k-1}{2k}$-empty of $A$. Since $\mu(J \cap A_n \cup \ldots \cup A_N) > (1-\varepsilon)\mu(J)$, and $J \backslash (A_n \cup \ldots \cup A_N)$ is $1$-empty of $A$ and has measure $< \varepsilon \mu(J)$, we have that $J$ is $\left(\frac{2k-1}{2k} (1-\varepsilon) + \varepsilon\right)$-empty of $A$. But then we may choose $\varepsilon$ small enough so $J$ $\frac{4k-1}{4k}$-empty of $A$, which is a contradiction. 

Hence, for some $m \geq n$, we must have $J \cap A_m$ is $\frac{2k-1}{2k}$ full of $A$, implying that $A$ is $k$-recurrent. 
\end{proof}

\subsection{Not Multiply Recurrent, Zero Type}
\label{sub:nmrzt}

A conservative ergodic Markov shift is zero type and can be chosen not to be $2$-recurrent \cite{AN00}.
For a rank-one example, the transformation $T$ in \cite[Section 3]{AdFrSi97} is such that $T\times T$ is not conservative, so it must be of zero type and it was shown in \cite{AtSi13} that is  not $2$-recurrent.

For completeness we give another  cutting and stacking transformation, $T$. We let $r_n = n+2$ for all $n$. Then, we add the appropriate number of spacers above each subcolumn to guarantee the following sets of heights. Again, we never add spacers on the first subcolumn. For each $n$, we want $$H_n = \{0\} \bigcup \left\{5^{\frac{n(n+1)}{2}}, 5^{\frac{n(n+1)}{2} + 1}, \ldots, 5^{\frac{n(n+1)}{2}+n}\right\}.$$

We notice that this transformation satisfies a steepness condition. That is, for any pair of distinct $h, h^\prime \in H$, the set of all heights, if $h^\prime > h$, we have that $h^\prime \geq 5 \cdot h$. Then, by Theorem 5.1 in \cite{DGPS}, we have that since $T$ is steep, normal, rank-one, and $\{r_n\}$ is nondecreasing with $\sup \{r_n\} = \infty$, then $T$ is zero-type. 

We can also see that $T$ is not multiply recurrent. In particular it is not $2$-recurrent. We consider the interval $I$. We suppose for sake of contradiction that $\mu(I \cap T^k I \cap T^{2k} I) > 0$ for some $k \geq 1$. Then, we may find some column $C_n$ such that $2k < \max\{D(I, n)\}$. We must then have that $$k, 2k \in S_n := \left\{\sum_{i=0}^n h_i : h_i \in H_i\right\}.$$ But we notice that all elements in $S_n$ are of the form $\sum_{i=0}^N \varepsilon_i \cdot 5^i$ where $N \in \Nm$ and $\varepsilon_i \in \{0,1\}$. But we can't have both $k$ and $2k$ of this form, as every natural number has a unique representation as a sum $\sum_{i=1}^N a_i \cdot 5^i$ where $a_i \in \{0,1,2,3,4\}$. Hence, we have a contradiction and $T$ could not have been $2$-recurrent. 

\subsection{Independence of Multiple Recurrence and the Zero Type Properties}

To complete the independence of multiple recurrence and the zero type,  we mention some examples. Eigen,  Hajian and Halverson in 1998 constructed examples that were multiply recurrent but not zero type (\cite{ESH98}). Furthermore, it is well known that the Hajian-Kakutani cutting and stacking transformation with $r_n=2$ and $2h_n$ spacers on the right subcolumn in each stage is neither multiply recurrent nor zero type. An number of examples that are multiply recurrent and not zero type, but weakly mixing for example, were constructed in \cite{DaSi04}. In \cite{DaRy10} Danilenko and Ryzhikov construct examples of ergodic multiply recurrent  transformations with arbitrary set of spectral multiplicities that are not of zero type by \cite{Ry11} since the weak closure of the powers is nontrivial.

%%%%%%%%% RATIONAL WEAK MIXING %%%%%%%%%%%%%%
\section{Rational Weak Mixing}
\label{sec:rwm}

In this section, we will give various conditions that imply that a transformation is  not rationally weakly mixing. We first introduce some notation. We fix a non-negative sequence $u = \{u_k\}$ such that $\sum_{k=0}^\infty u_k = \infty$ and let $a_u(n)$ denote $\sum_{k=0} ^{n-1}u_k$. Then, for a subset $K$ of $\Nm$, we define $a_u(K,n) = \sum_{k\in K\cap[0,n-1]}u_k$.

Now, we define some notions of smallness and density. If $\kappa \subset \Nm$, we say that a set $K$ is a $(u, \kappa)$-\textbf{small set} if 
$$\frac{a_u(K,n)}{a_u(n)}\underset{n \in \kappa}{\longrightarrow}0$$
as $n\to \infty$. Furthermore, we say a sequence $x_k\to L$ in $(u,\kappa)$-\textbf{density} to $L \in \mathbb{R}$ if there exists a $(u,\kappa)$-small set $K$ such that 
$x_k\to L$ as $k\to \infty$ for $k \in \kappa \backslash K$. We denote this type of convergence as $x_k\underset{(u,\kappa)-d}{\longrightarrow}L$. 
We say that $x_k$ converges $(u,\kappa)$-\textbf{strongly Cesaro} to $L \in \mathbb{R}$ if $$\frac{1}{a_u(n)}\sum_{k=0}^{n-1}u_k|x_k-L| \underset{k\rightarrow \infty,k \in \kappa}{\longrightarrow} 0. $$
We denote this type of convergence as  $x_k\underset{(u,\kappa)-s.C.}{\longrightarrow}L.$

We also say that two sequences $u = \{u_k\}$ and $v=\{v_k\}$ are $\kappa$-\textbf{asymptotic} (denoted by $u\overset{\kappa}{\approx} v$) if $$\frac{1}{a_u(n)}\sum_{k=0}^{n-1}|u_k-v_k|\underset{k\rightarrow \infty,k \in \kappa}{\longrightarrow} 0. $$
Similarly, a sequence $u=\{u_k\}$ is called $\kappa$-\textbf{smooth} if $$\frac{1}{a_u(n)}\sum_{k=0}^{n-1}|u_{k+1}-u_k|\underset{k\rightarrow \infty,k \in \kappa}{\longrightarrow} 0. $$

We start with the following theorem that gives conditions which lead to a transformation being not rationally weakly mixing. 

\begin{theorem}
\label{thm:rwmcond}
If $T$ is a normal, rank-one transformation such that for all $N \in \Nm$, $$\sum_{i=0}^{N-1}(d_i-d'_i)=\sum_{i=0}^{N-1}(e_i-e'_i)$$ for $d_i, d_i^\prime, e_i, e_i^\prime \in H_i$ implies that $d_i - d_i^\prime = e_i - e_i^\prime$ for all $0 \leq i \leq N-1$, then $T$ is not rationally weakly mixing. 
\end{theorem}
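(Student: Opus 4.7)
The plan is to argue by contradiction: assume $T$ is rationally weakly mixing, and, by standard invariance of the RWM definition under replacement of the sweep-out set, take $F = I$, the base level of $C_0$. I will exhibit a level $J \subsetneq I$ for which the defining Cesaro sum does not vanish. Fix $j \geq 1$ and set $J = B_j$, the base of column $C_j$; by the refining property of rank-one, $J \subset I$, $h(J) = 0$, and $\mu(J) = \mu(I)/\prod_{i=0}^{j-1}|H_i| < \mu(I)$.

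The core calculation uses Lemma \ref{lem:normal}, which applies by normality. For each $k$ and all sufficiently large $N$, it yields $\mu(I \cap T^k I) = w_N |D(I,N) \cap (k + D(I,N))|$ and the analogous identity for $J$. Under the uniqueness hypothesis, if $k$ admits a decomposition $k = \sum_{i=0}^{N-1}(e_i - e_i')$ with $e_i, e_i' \in H_i$, then the tuple $(e_i - e_i')_{i=0}^{N-1}$ is uniquely determined. Writing $n_i(k) = |\{(d,d') \in H_i \times H_i : d - d' = e_i - e_i'\}|$, we obtain $|D(I,N) \cap (k + D(I,N))| = \prod_{i=0}^{N-1} n_i(k)$; similarly, for $k \in K^{+}(J) := \{k : \mu(J \cap T^k J) > 0\}$ the count for $J$ equals $\prod_{i=j}^{N-1} n_i(k)$. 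Moreover, for $k \in K^+(J)$ uniqueness forces $e_i - e_i' = 0$ for $i < j$, so $n_i(k) = |H_i|$ there. Substituting $w_N = \mu(I)/\prod_{i=0}^{N-1}|H_i|$ and $\mu(J) = \mu(I)/\prod_{i=0}^{j-1}|H_i|$ yields the crucial identity
\[ \mu(J \cap T^k J) = \mu(I)\,\mu(J)\, u_k(I), \qquad k \in K^+(J). \]

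With this identity, on $K^+(J)$ one has $|\mu(J \cap T^k J) - \mu(J)^2 u_k(I)| = \tfrac{\mu(I)-\mu(J)}{\mu(I)}\mu(J \cap T^k J)$. Discarding the (nonnegative) contribution from $k \notin K^+(J)$ and applying Theorem \ref{thm:levels} (weak rational ergodicity on levels) for the pair $A = B = J \subset I$ gives
\[ \frac{1}{a_n(I)} \sum_{k=0}^{n-1} \left| \mu(J \cap T^k J) - \mu(J)^2 u_k(I) \right| \geq \frac{\mu(I)-\mu(J)}{\mu(I)} \cdot \frac{1}{a_n(I)} \sum_{k=0}^{n-1} \mu(J \cap T^k J) \longrightarrow \frac{(\mu(I)-\mu(J))\mu(J)^2}{\mu(I)} > 0, \]
contradicting rational weak mixing for $A = B = J$. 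The main technical obstacle is deriving the closed-form identity $\mu(J \cap T^k J) = \mu(I)\mu(J) u_k(I)$ on $K^+(J)$—it requires carefully unpacking the product structure of the intersection counts and how the uniqueness hypothesis forces $n_i(k) = |H_i|$ for $i < j$—but once that is in place, Theorem \ref{thm:levels} supplies the contradiction immediately.
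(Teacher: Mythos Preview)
Your argument is correct and in fact more direct than the paper's. Both proofs rest on the same core computation---under the uniqueness hypothesis, for $k$ with $\mu(J\cap T^kJ)>0$ one gets $\mu(J\cap T^kJ)/\mu(I\cap T^kI)=\mu(J)/\mu(I)$ rather than the target $\mu(J)^2/\mu(I)^2$---but the paper reaches the contradiction via Aaronson's machinery: it passes to the subsequence $\kappa=\{M_m+1\}$, invokes Aaronson's result that rational weak mixing forces $(u,\kappa)$-density convergence of the ratio $\mu(J\cap T^kJ)/\mu(I\cap T^kI)$ to $\mu(J)^2/\mu(I)^2$, and observes that this ratio only takes the two values $\mu(J)/\mu(I)$ and $0$. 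You instead plug the identity $\mu(J\cap T^kJ)=\mu(I)\mu(J)u_k(I)$ directly into the defining Ces\`aro sum and bound it below using Theorem~\ref{thm:levels}. Your route is more elementary and self-contained, avoiding the $(u,\kappa)$-density and strong-Ces\`aro lemmas from \cite{Aa12}; the paper's route is more modular and makes explicit that the obstruction already appears at the level of subsequence rational weak mixing along $\kappa$.

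One point deserves more care: the phrase ``by standard invariance\ldots take $F=I$.'' Rational weak mixing gives \emph{some} sweep-out set $F$, and Aaronson shows the good sets form the hereditary ring $R(T)$, but it is not automatic that $I\in R(T)$. The paper sidesteps this by working along $\kappa=\{M_m+1\}$, where Theorem~\ref{thm:swre} guarantees $I\in R_\kappa(T)$; since rational weak mixing implies subsequence rational weak mixing along $\kappa$, the condition with $F=I$ is then available along $\kappa$. Your lower bound via Theorem~\ref{thm:levels} holds along the full sequence and hence along $\kappa$, so your contradiction survives this passage unchanged---but you should state it.
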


\begin{proof}
We suppose that we have such a transformation $T$, but that it is rationally weakly mixing. Then, it is subsequence rationally weakly mixing along the sequence $\kappa = \{M_m+1\}$. 

It can be shown that the sets $F \subset X$ that we have rational weak mixing along $\kappa$ with respect to is exactly $R_{\kappa}(T)$, the sets that we have weak rational ergodicity along $\kappa$ with respect to (\cite{Aa12}). We proved earlier that $T$ is subsequence weakly ergodic along $\kappa$ with respect to $I$ the level in $C_0$ and therefore any $J \subseteq I$ of positive measure. Define $u=\{u_k=\mu(J \cap T^kJ) \}$. 

We know by Aaronson \cite{Aa12} that if 
\begin{equation*}
\frac{\sum_{k=0}^{n-1}|\mu(J\cap T^kJ)-\mu(J)^2u_k(I)|}{a_n(I)} \rightarrow 0
\end{equation*}
then
\begin{equation*}
\frac{\mu(J\cap T^kJ)}{\mu(I\cap T^kI)} \underset{(u,\kappa)-d}{\longrightarrow} \frac{\mu(J)^2}{\mu(I)^2}.
\end{equation*}
Now we pick $J$ a descendant in column $C_j$ for some $j > 0$. Then, since $T$ is normal, by Lemma \ref{lem:normal}, we have that for some $N$ large, 
$$\frac{\mu(J\cap T^kJ)}{\mu(I\cap T^kI)}=\frac{w_N|D(J,N)\cap (k+D(J,N))|}{w_N|D(I,N)\cap (k+D(I,N)|}.$$ But then if $\mu(J \cap T^k J) > 0$, we can write $k=\sum_{i=0}^{N-1}k_i$, where $k_i \in D(J,i)-D(J,i)$. Moreover, $k = \sum_{i=j}^{N-1} (d_i-d_i^\prime) = \sum_{i=0}^{N-1} (e_i - e_i^\prime)$. Since we have that $d_i - d_i^\prime = e_i - e_i^\prime$ for all $j \leq i \leq N-1$, we have that 
\begin{align*} 
\frac{\mu(J \cap T^k J)}{\mu(I \cap T^k I)} & = \frac{\prod_{i=j}^{N}|\{(d_i,d^\prime_i):d_i-d^\prime_i=k_i\}|}{\prod_{i=0}^{N}|\{(e_i,e^\prime_i):e_i-e^\prime_i=k_i\}|} \\
& = \frac{1}{\prod_{i=0}^{j-1} \{(e_i, e_i^\prime) : e_i - e_i^\prime = 0\}|} \\
& = \frac{1}{\prod_{i=0}^{j-1} |H_i|}.
\end{align*}
Also, for $k$ such that $\mu(J \cap T^k J) = 0$, we have that $\frac{\mu(J \cap T^k J)}{\mu(I \cap T^k I)} = 0$. Hence, since $\frac{\mu(J)^2}{\mu(I)^2} = \frac{1}{\prod_{i=0}^{j-1} |H_i|^2}$, we have that 
 \begin{equation*}
 \frac{\mu(J\cap T^kJ)}{\mu(I\cap T^kI)} \underset{(u,\kappa)-d}{\nrightarrow} \frac{\mu(J)^2}{\mu(I)^2}.
\end{equation*} 
Hence, $T$ cannot be rationally weakly mixing. 
\end{proof}

Now, we use this theorem to give a few examples and classes of transformations that cannot be rationally weakly mixing. The following was shown in \cite{DGPS} under the normality assumption.

\begin{corollary}
\label{cor:steep}
If the transformation $T$ is steep, then $T$ is not rationally weakly mixing. 
\end{corollary}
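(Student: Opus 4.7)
The plan is to verify the unique-representation hypothesis of Theorem~\ref{thm:rwmcond} directly from the steepness condition. Suppose $\sum_{i=0}^{N-1}(d_i - d_i') = \sum_{i=0}^{N-1}(e_i - e_i')$ with $d_i, d_i', e_i, e_i' \in H_i$, and set $F_i := (d_i - d_i') - (e_i - e_i') = d_i + e_i' - d_i' - e_i$, so that $\sum_{i=0}^{N-1} F_i = 0$. I will show each $F_i = 0$, after which Theorem~\ref{thm:rwmcond} delivers the conclusion (the normality hypothesis of that theorem is not an obstruction, since its proof goes through in general by replacing Lemma~\ref{lem:normal} with Lemma~\ref{lem:approx}).

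The argument will be by contradiction: if some $F_i \neq 0$, let $j$ be the largest such index, so that $|F_j| \leq \sum_{i<j}|F_i|$. The strategy is to squeeze this between incompatible upper and lower bounds. For the tail, each $|F_i| \leq 2\max H_i$. Every element of $H_{i+1}\setminus\{0\}$ strictly exceeds $\max H_i$, so $\max H_i$ and $\min(H_{i+1}\setminus\{0\})$ are consecutive entries of $H$, and steepness then gives $\max H_i \leq \tfrac{1}{5}\min(H_{i+1}\setminus\{0\}) \leq \tfrac{1}{5}\max H_{i+1}$. Iterating and summing a geometric series,
\[ \sum_{i<j}|F_i| \leq 2\sum_{i=0}^{j-1}\max H_i \leq 2\min(H_j\setminus\{0\})\sum_{k=1}^{j} 5^{-k} < \tfrac{1}{2}\min(H_j\setminus\{0\}). \]

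For the matching lower bound on $|F_j|$, I cancel common entries of the multisets $P = \{d_j, e_j'\}$ and $N = \{d_j', e_j\}$ (both drawn from $H_j$). Full cancellation forces $F_j = 0$, contradicting the choice of $j$, so the remaining multisets $P', N'$ are nonempty, have equal cardinality $1$ or $2$, and are multiset-disjoint. If $|P'| = |N'| = 1$, then $F_j = p - n$ with distinct $p, n \in H_j$: either $n = 0$, giving $|F_j| \geq \min(H_j \setminus \{0\})$, or $p, n > 0$ and steepness gives $p \geq 5n$, so $|F_j| \geq 4n \geq 4\min(H_j \setminus \{0\})$. If $|P'| = |N'| = 2$, let $M$ be the largest of the four entries; multiset disjointness forces $M$ to lie on exactly one side, which we may take to be the positive one, and then the two entries of $N'$ are elements of $H_j$ strictly below $M$, each at most $M/5$ by steepness, giving $|F_j| \geq M - 2M/5 = 3M/5 \geq (3/5)\min(H_j \setminus \{0\})$. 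In every surviving case $|F_j| \geq (3/5)\min(H_j \setminus \{0\})$, and since $3/5 > 1/2$ this contradicts the tail bound, forcing $F_i = 0$ for every $i$.

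The main obstacle is the lower bound on nonzero $|F_j|$: one has to enumerate how the four elements of $H_j$ defining $F_j$ can coincide and check that steepness always supplies the $3/5$ factor. Everything else reduces to the clean geometric-series estimate above followed by an appeal to Theorem~\ref{thm:rwmcond}, which concludes that $T$ is not rationally weakly mixing.
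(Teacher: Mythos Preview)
Your proposal is correct and follows the same approach as the paper: verify that steepness implies the unique-representation hypothesis of Theorem~\ref{thm:rwmcond}, then invoke that theorem. The paper's proof simply asserts this implication in one line, while you supply the detailed estimate (and also explicitly address the normality hypothesis, which the paper leaves implicit).
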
 
\begin{proof}
If $T$ is steep with steepness rate equal to $5$, then $\sum_{i=0}^{N}(d_i-d'_i)=\sum_{i=0}^{N}(e_i-e'_i)$ with $d_i,d'_i,e_i,e'_i \in H_i$ implies that $d_i-d'_i=e_i-e'_i$. It follows from Theorem \ref{thm:rwmcond} that $T$ is not rationally weakly mixing. 
\end{proof}
 
\begin{corollary}
The multiply recurrent, zero type transformation constructed in Section \ref{sec:ztmr} is not rationally weakly mixing. 
\end{corollary}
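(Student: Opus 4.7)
The plan is to derive this corollary as a direct application of Theorem~\ref{thm:rwmcond}, by verifying that the construction in Section~\ref{sub:nmrzt}... wait, I mean the multiply recurrent, zero type construction, which is the one given by the heights
\[H_n = \{0\} \cup \{j \cdot 5^{n(n+1)/2} : 1 \le j \le \lceil\sqrt{n}\rceil\} \cup \{5^{n(n+1)/2 + i} : \lceil\sqrt{n}\rceil \le i \le n\},\]
satisfies all of the hypotheses of that theorem. There are two things to check: normality of $T$, and the unique representation property for sums of differences drawn from the $H_n$.

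For normality, I would observe that the rightmost subcolumn of $C_n$ corresponds to the largest element of $H_n$, namely $5^{n(n+1)/2 + n}$, and by construction the spacers $s_{n,r_n-1}$ are equal to the gap $5^{n(n+1)/2+n} - 5^{n(n+1)/2 + n - 1} - h_n$, which is positive for all sufficiently large $n$ since $h_n$ grows much more slowly than these powers of $5$. Hence spacers are added above the rightmost subcolumn for infinitely many $n$, so $T$ is normal.

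The unique representation property is already the core combinatorial content of the proof that $T$ is zero type (the proposition immediately preceding Proposition~\ref{prop:mr}). There it was shown that for each $n$ every element of the difference set $D_n := H_n - H_n$ is a multiple of $5^{n(n+1)/2}$, and that consecutive nonzero elements of $D_{n+1}$ are at least a factor of $5$ larger than any element of $D_n$, so that any integer $k$ has at most one representation as $\sum_{i=0}^{N-1} d_i$ with $d_i \in D_i$. This is precisely the hypothesis required by Theorem~\ref{thm:rwmcond}.

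With these two facts verified, the conclusion follows immediately from Theorem~\ref{thm:rwmcond}: $T$ is not rationally weakly mixing. The main (and only) thing to be careful about is that the unique representation verification in the zero type proof really is stated for all $N$ and not just asymptotically; since it was established via the uniqueness of the base-$5$ expansion of $k$ with digits in $\{0,\pm 1,\pm 2\}$, this uniformity is already present, so no additional work is needed.
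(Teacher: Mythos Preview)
Your proposal is correct and follows essentially the same approach as the paper: both arguments invoke Theorem~\ref{thm:rwmcond}, observing that the unique representation property for sums of differences was already established in the proof that $T$ is of zero type. You supply slightly more detail on normality than the paper does (the paper simply notes ``By construction, $T$ is normal'' in the zero type proof and does not revisit it in the corollary), but this is just added thoroughness, not a different route.
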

\begin{proof}
As one of the steps in the proof of the zero type property was proving that we had uniqueness of representation as sums of differences, Theorem \ref{thm:rwmcond} gives us that this transformation is not rationally weakly mixing. Hence, there exists a rank-one transformation that is zero-type, multiple recurrent and not rationally weak mixing. 
\end{proof}

\begin{corollary}
The multiply recurrent, not zero type transformation constructed in Section \ref{sec:ztmr} is not rationally weakly mixing. 
\end{corollary}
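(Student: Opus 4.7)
The plan is to invoke Theorem \ref{thm:rwmcond} by verifying that the relevant construction satisfies the unique-representation-as-a-sum-of-differences hypothesis. All the explicit cutting-and-stacking examples produced in Section \ref{sec:ztmr} are built with height sets $H_n$ whose nonzero entries lie near the scale $5^{n(n+1)/2}$, with the next scale $5^{(n+1)(n+2)/2}$ separated by a factor of at least $5$. This geometric separation guarantees that the difference sets $D_n = H_n - H_n$ satisfy a steepness condition: any nonzero $d_{n+1} \in D_{n+1}$ has $|d_{n+1}| \geq 5\,|d_n|$ for every $d_n \in D_n$. Consequently, for any $k \in \Zm$, the base-$5$ expansion of $k$ forces at most one decomposition $k = \sum_{i=0}^{N-1}(d_i - d_i^\prime)$ with $d_i, d_i^\prime \in H_i$.

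Granted this unique representation, Theorem \ref{thm:rwmcond} applies directly and yields that the transformation is not rationally weakly mixing. So the argument reduces to one concrete arithmetic check on the particular $\{H_n\}$ used in the construction of the multiply recurrent, non zero type example.

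The key step is therefore to exhibit the unique-representation property for this construction. I would proceed exactly as in Proposition \ref{prop:mr} and the zero type proof preceding it: first observe that each nonzero element of $H_n$ is a multiple of $5^{n(n+1)/2}$, then note that the largest element in $\bigoplus_{i=0}^{n}(H_i - H_i)$ is strictly smaller than the smallest nonzero element of $H_{n+1} - H_{n+1}$. Writing any candidate $k$ in base $5$ then uniquely determines which contribution comes from each $D_i$, and within each $D_i$ the representation of $d_i - d_i^\prime$ may vary in multiplicity, but the pairwise difference $d_i - d_i^\prime$ itself is determined. This is precisely the hypothesis of Theorem \ref{thm:rwmcond}.

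I do not expect any substantive obstacle: the heavy lifting was already done in Theorem \ref{thm:rwmcond}, and the uniqueness of representation is a direct arithmetic observation about the chosen $H_n$. The only minor subtlety to notice is that "not zero type" is compatible with steepness: failure of $\mu(I \cap T^kI) \to 0$ results from a single scale $H_n$ containing many small multiples of $5^{n(n+1)/2}$ — which inflates the count $|\{(e_i,e_i^\prime) : e_i - e_i^\prime = d_i - d_i^\prime\}|$ at level $i=n$ — and this does not spoil uniqueness of the decomposition, which depends only on the gaps between the scales, not on the local density within each scale.
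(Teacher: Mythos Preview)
Your route through Theorem \ref{thm:rwmcond} is the same as the paper's: the paper's proof is the single line ``This transformation is steep, and therefore not rationally weakly mixing by Corollary \ref{cor:steep} and Theorem \ref{thm:rwmcond}.'' Since Corollary \ref{cor:steep} is itself just the observation that steepness forces the unique-representation hypothesis of Theorem \ref{thm:rwmcond}, your direct verification of unique representation is essentially the same argument, unfolded one layer.

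There is, however, a real problem with the last paragraph of your proposal. You try to explain how ``not zero type'' is compatible with the unique-representation hypothesis by positing that some $H_n$ contains many small multiples of $5^{n(n+1)/2}$, so that the count $|\{(e_i,e_i'):e_i-e_i'=d_i-d_i'\}|$ stays large. But this mechanism is not present anywhere in Section \ref{sec:ztmr}: the only explicit construction with such small multiples is the multiply recurrent \emph{zero type} example, and it is zero type precisely because $\lceil\sqrt{n}\rceil/|H_n|\to 0$. More to the point, many consecutive multiples $j\cdot 5^{n(n+1)/2}$ and $(j+1)\cdot 5^{n(n+1)/2}$ have ratio less than $5$, so such an $H_n$ would \emph{not} be steep, contradicting what the paper's proof asserts about the transformation in question. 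In fact, Section \ref{sec:ztmr} does not give an explicit cutting-and-stacking recipe for a multiply recurrent, not zero type example at all; it only cites external constructions. So your attempt to read off the $H_n$ and argue arithmetically is aimed at a target that is not actually specified in the paper. The paper simply asserts steepness and invokes Corollary \ref{cor:steep}; you should not embellish beyond that.
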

\begin{proof}
This transformation is steep, and therefore not rationally weakly mixing by Corollary \ref{cor:steep} and Theorem \ref{thm:rwmcond}. Hence, there exist multiply recurrent, not zero type transformations that are not rationally weakly mixing. 
\end{proof}

\begin{theorem} 
\label{thm:boundedrwm}
Let $T$ be a rank-one transformation. If for all columns $C_j$ and $J$ a level of $C_j$ with $J \subset I$, we have that 
\begin{equation*}
\label{eqn:limit:2}
\frac{\mu(J\cap T^{k+1}J)}{\mu(J\cap T^kJ)}\underset{(u, \kappa)-d}{\nrightarrow} 1,
\end{equation*}
where $\kappa=\{M_m+1\}_{m \geq 1}$, then $T$ is not rationally weakly mixing.

Furthermore, if there exists a level of a column $C_j$, $J \subset I$, such that $1 \notin (D(J,N)-D(J,N))-(D(J,N)-D(J,N))$,  for all sufficiently large $N$ and $T$ is a rank-one transformation, then we have that $T$ is not rationally weakly mixing. 
\end{theorem}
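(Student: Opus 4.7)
I will argue the contrapositive of the first statement: assuming $T$ is rationally weakly mixing, I will show $\mu(J\cap T^{k+1}J)/\mu(J\cap T^kJ)\to 1$ in $(u,\kappa)$-density for every level $J\subset I$ of every column $C_j$. By Theorem \ref{thm:swre}, the set $I$ belongs to the subsequence weakly rationally ergodic set along $\kappa=\{M_m+1\}_{m\geq 1}$, and Aaronson's characterization in \cite{Aa12} identifies this with the rational weak mixing set. Consequently, the rational weak mixing property holds along $\kappa$ with respect to $I$, and applying it with $A=B=J$ at each of the indices $k$ and $k+1$ gives
\[ \frac{\mu(J\cap T^kJ)}{u_k(I)} \longrightarrow \mu(J)^2 \quad\text{and}\quad \frac{\mu(J\cap T^{k+1}J)}{u_{k+1}(I)}\longrightarrow \mu(J)^2 \]
in $(u(I),\kappa)$-strong Ces\`aro, hence $(u(I),\kappa)$-density. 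Dividing the two convergences produces $\frac{\mu(J\cap T^{k+1}J)}{\mu(J\cap T^kJ)}\cdot \frac{u_k(I)}{u_{k+1}(I)}\to 1$ in density, and then the $\kappa$-smoothness of $u(I)$---itself a consequence of rational weak mixing by \cite{Aa12}---gives $u_{k+1}(I)/u_k(I)\to 1$ in $(u,\kappa)$-density. Multiplying these two convergences yields the desired statement.

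For the second statement, my plan is to show that the combinatorial hypothesis forces the ratio to stay away from $1$ at every index where it is defined, and then to apply the first statement. Lemma \ref{lem:approx} provides the bridge: it gives $\mu(J\cap T^kJ) = w_N\cdot |D(J,N)\cap (k+D(J,N))|+O(w_N)$ for all sufficiently large $N$, so if $|D(J,N)\cap (k+D(J,N))|=0$ for every large $N$ then $\mu(J\cap T^kJ)\leq C w_N\to 0$, forcing $\mu(J\cap T^kJ)=0$. Contrapositively, positivity of $\mu(J\cap T^kJ)$ forces $k\in D(J,N)-D(J,N)$ for all sufficiently large $N$. Were both $\mu(J\cap T^kJ)$ and $\mu(J\cap T^{k+1}J)$ positive at the same $k$, both $k$ and $k+1$ would lie in $D(J,N)-D(J,N)$ for all large $N$, placing $1=(k+1)-k$ in the iterated difference set $(D(J,N)-D(J,N))-(D(J,N)-D(J,N))$ and contradicting the hypothesis. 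Hence at each $k$ at most one of the two measures is positive, so the ratio $\mu(J\cap T^{k+1}J)/\mu(J\cap T^kJ)$ is identically $0$ (when only the denominator is positive), $\infty$ (when only the numerator is), or undefined, and in particular is never close to $1$; this precludes $(u,\kappa)$-density convergence to $1$, and the first statement then yields that $T$ is not rationally weakly mixing.

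The main obstacle will be the $\kappa$-smoothness step in the first statement, since I must rely on Aaronson's result from \cite{Aa12} that rational weak mixing entails $\kappa$-smoothness of $u(I)$---this is what converts the individual density convergences of numerator and denominator into the convergence of their ratio. The combinatorial half of the argument is comparatively clean, resting only on Lemma \ref{lem:approx} together with the elementary set-theoretic manipulation of the iterated difference sets of $D(J,N)$.
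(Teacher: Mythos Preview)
Your proposal is correct and follows essentially the same strategy as the paper. Both argue the contrapositive of the first part---assume rational weak mixing, deduce $(u,\kappa)$-density convergence of the ratio to $1$---and both derive the second part by showing the combinatorial hypothesis forces $\mu(J\cap T^kJ)$ and $\mu(J\cap T^{k+1}J)$ never to be simultaneously positive.

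The one substantive difference is in the routing of part one. The paper observes directly that $J,TJ\in R_\kappa(T)$, and from rational weak mixing applied to the pairs $(J,J)$ and $(J,TJ)$ obtains $\mu(J\cap T^kJ)\overset{\kappa}{\approx}\mu(J\cap T^{k+1}J)$; this is $\kappa$-smoothness of the sequence $v_k=\mu(J\cap T^kJ)$ itself, and Aaronson's Proposition~3.2 then gives $v_{k+1}/v_k\to 1$ in $(v,\kappa)$-density immediately. You instead compare both $v_k$ and $v_{k+1}$ to the reference sequence $u_k(I)$, divide, and then invoke $\kappa$-smoothness of $u(I)$ to dispose of the factor $u_k(I)/u_{k+1}(I)$. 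Both routes ultimately rest on the same machinery from \cite{Aa12}; the paper's is slightly more direct since it avoids the detour through $u(I)$, while yours makes the dependence on the base level $I$ more explicit. Note also that the theorem's $u$ is defined in the paper's proof as $u_k=\mu(J\cap T^kJ)$, not $u_k(I)$; under the rational weak mixing assumption these are $\kappa$-asymptotic (up to a constant) so the two density notions coincide, but you should remark on this.

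One small point: in part two you write ``the first statement then yields'' after checking non-convergence for a \emph{single} $J$, whereas the first statement as written hypothesizes non-convergence for \emph{all} such $J$. What you actually proved in part one is the stronger contrapositive (rational weak mixing implies convergence for every $J$), and it is this---not the stated form---that you are invoking. The paper's proof does exactly the same thing, so this is a shared imprecision rather than a gap in your argument.
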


\begin{proof}
Suppose, by contradiction, that $T$ is rationally weak mixing. By the same argument as in \ref{thm:rwmcond}, $T$ is subsequence rationally weakly mixing along $\kappa$ and $I=C_0 \in R_{\kappa}(T)$. If $J \subseteq I$, then $J \in R_{\kappa}(T)$ and $T(J) \in R_{\kappa}(T)$. This proves that $\mu(J\cap T^kJ) \overset{\kappa}{\approx} \mu(J\cap T^{k+1}J)$, meaning that $\mu(J \cap T^k J)$ is a $\kappa$-smooth sequence for levels $J \subseteq I$ of the columns $C_j$. Denote $v_k=\mu(J\cap T^kJ)$ and let $v=(v_0,v_1,\ldots).$ As $T$ is subsequence rationally weakly mixing along $\kappa$, then $u\overset{\kappa}{\approx} v$.

By Proposition 3.2 in \cite{Aa12}, this means that the sequence $$\frac{\mu(J\cap T^{k+1}J)}{\mu(J \cap T^k J)} \underset{(v, \kappa)-s.C.}{\longrightarrow} 1,$$ 
which implies by Remark 3.3 iii) of the same paper that $$\frac{\mu(J\cap T^{k+1}J)}{\mu(J\cap T^kJ)}\underset{(v, \kappa)-d}{\longrightarrow} 1.$$
 By Remark 3.3 ii) in \cite{Aa12}, it follows that $$\frac{\mu(J\cap T^{k+1}J)}{\mu(J\cap T^kJ)}\underset{(u, \kappa)-d}{\longrightarrow} 1,$$ as $u\overset{\kappa}{\approx} v$, and we have a contradiction.

Now we prove the second half of the theorem. By the first part, it suffices to show for $J$ that $$ \frac{\mu(J\cap T^{k+1}J)}{\mu(J\cap T^kJ)}\underset{(u,\kappa)-d}{\nrightarrow} 1. $$
But as $1 \notin (D(J,N)-D(J,N))-(D(J,N)-D(J,N))$ we get that for any fixed $k$, $\mu(J\cap T^{k+1}J)$ and $\mu(J\cap T^kJ)$ cannot be both be greater than $0$ at the same time. Hence, the limit cannot go to $1$ and we are done. \end{proof}

\begin{remark}
We see that the above proof shows even more than the conclusion of theorem, it shows that $T$ is not subsequence rationally weakly mixing along the sequence $\kappa=\{M_m+1\}$. The importance of this subsequence comes from the fact that all rank-one transformations are subsequence weakly rationally ergodic along $\kappa$. 
\end{remark}

%%%%%%%%%%% APPENDIX %%%%%%%%%%
\appendix
\section{}

\subsection{An Equivalent Definition of Rank-one Transformations}
The definition that we gave of a rank-one transformation was one for which there exists a sequence of Rokhlin columns $\{C_n\}$ such that for any $A \subset X, \mu(A) < \infty$, and $\varepsilon > 0$, there exists an $N$ such that for all $n \geq N$, $C_n$ approximates $A$ within $\varepsilon$. If a transformation satisfies this property, then we say that $C_n \rightarrow \varepsilon$, or that the columns \textbf{converge to the point partition} on $X$. 

A Rokhlin column $C_{n+1}$ is said to be a \textbf{refinement} of $C_n$ if for each level $A_i$ in $C_n$, $A_i$ can be written as a union of levels $B_j$ of $C_{n+1}$. As we often use in our paper that our chosen Rokhlin columns for our transformations are refining, we prove that this is a valid assumption here. 

\begin{proposition} 
\label{prop:refining}
Every rank-one transformation has a sequence of Rokhlin columns converging to the point partition such that columns are refining. 
\end{proposition}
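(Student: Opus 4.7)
The plan is to construct the refining columns $\tilde C_k$ inductively by extracting a subsequence of $(C_n)$ with rapidly decreasing approximation errors and modifying the bases at each step so as to achieve \emph{strict} refinement rather than mere approximation.

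First, I would pass to a subsequence $m_1 < m_2 < \cdots$ satisfying two properties: (a) for each $k$, the first $k$ sets in a fixed countable dense family of finite-measure Borel sets are approximated within $2^{-k}$ by unions of levels of $C_{m_k}$; (b) for each $k$, each level of $C_{m_k}$ is approximated within $\varepsilon_k := 2^{-k}/h_{m_k}$ by a union $\bigcup_{j \in J} T^j B_{m_{k+1}}$ of levels of $C_{m_{k+1}}$, where the index set $J \subseteq \{0,\dots,h_{m_{k+1}}-1\}$ satisfies $\max J + h_{m_k} \le h_{m_{k+1}}$ (so that all relevant $T$-translates remain within the column $C_{m_{k+1}}$).

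Next, I would build $\tilde C_k$ inductively, taking $\tilde C_1 = C_{m_1}$. At the inductive step, given $\tilde C_k$ with base $\tilde B_k$, I would pick an index set $J_k$ such that $\bigcup_{j \in J_k} T^j B_{m_{k+1}}$ approximates $\tilde B_k$ within $\varepsilon_k$, and set
\[ \tilde B_{k+1} := \{x \in B_{m_{k+1}} : T^j x \in \tilde B_k \text{ for every } j \in J_k\}. \]
Since $\tilde B_{k+1} \subseteq B_{m_{k+1}}$ and $\max J_k + h_{m_k} \le h_{m_{k+1}}$, the sets $\{T^j \tilde B_{k+1} : j \in J_k\}$ are pairwise disjoint subsets of $\tilde B_k$, and their union differs from $\tilde B_k$ by at most $\varepsilon_k$ in measure. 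I would then replace $\tilde B_k$ by the exact union $\bigcup_{j \in J_k} T^j \tilde B_{k+1}$ to enforce strict refinement, and define $\tilde C_{k+1}$ as the column with base $\tilde B_{k+1}$ and height $h_{m_{k+1}}$.

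The main obstacle is that the shrinkage of $\tilde B_k$ at step $k+1$ invalidates the exact refinement previously established between $\tilde C_{k-1}$ and $\tilde C_k$. To handle this, the modifications must be cascaded backward: at each stage, every earlier base $\tilde B_j$ for $j \le k$ must be correspondingly shrunk so that the chain $\tilde B_j = \bigcup_{i \in E_j} T^i \tilde B_{j+1}$ continues to hold exactly. Because $h_{m_k}\varepsilon_k \le 2^{-k}$, the total measure removed from any fixed level of any fixed $\tilde C_j$ over all subsequent modifications is summable, so the limit columns are well-defined Rokhlin columns and are strictly refining by construction. Finally, the columns $\tilde C_k$ converge to the point partition by property (a) together with the summability of the total modifications, since each $\tilde C_k$ differs in measure from $C_{m_k}$ only by a summable amount.
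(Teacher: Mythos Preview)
Your overall strategy---pass to a rapidly approximating subsequence, build the refining columns by successively shrinking bases, and cascade the shrinkage backward so that earlier refinements are preserved---is sound and is genuinely different from the paper's proof. The paper (following Baxter) works instead with the column metric $\rho(C,D)=\sum_i\mu(A_i\triangle B_i)$ and builds a doubly-indexed family $C_k^r$ with $C_k^{r+1}\le C_{k+1}^r$ and $\rho(C_k^r,C_k^{r+1})<\delta_{r+k}$; the limit $D_k=\lim_r C_k^r$ is then automatically refining. That argument never manipulates bases directly and gets the error control from a triangle inequality in $\rho$.

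However, your quantitative claim has a gap. You assert that $\bigcup_{j\in J_k}T^j\tilde B_{k+1}$ differs from $\tilde B_k$ by at most $\varepsilon_k$, but with your definition $\tilde B_{k+1}=\{x\in B_{m_{k+1}}:T^jx\in\tilde B_k\text{ for all }j\in J_k\}$ this is false: a single point of $A\setminus\tilde B_k$ (where $A=\bigcup_{j\in J_k}T^jB_{m_{k+1}}$) forces one point out of $\tilde B_{k+1}$, and hence removes $|J_k|$ points from $\bigcup_j T^j\tilde B_{k+1}$. The actual bound is of order $|J_k|\,\varepsilon_k$, not $\varepsilon_k$. Since $|J_k|$ is roughly $w_{m_k}/w_{m_{k+1}}$, your choice $\varepsilon_k=2^{-k}/h_{m_k}$ gives no control on $|J_k|\varepsilon_k$, so neither the summability of the backward cascade nor the claim that $\tilde C_k$ stays close to $C_{m_k}$ follows.

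The paper avoids this amplification by a different construction of the approximating base (Lemma~\ref{lem:columnbound}): rather than intersecting all preimages, it keeps only those levels of $C_{m_{k+1}}$ that are \emph{more than half} contained in the target, and drops the top level to guarantee room for the stack. That ``majority vote'' choice gives base error $\le P(B_{m_k},C_{m_{k+1}})+w_{m_{k+1}}$ with no $|J_k|$ factor, and both terms can be made summable by taking $m_{k+1}$ large. Your argument can be repaired along these lines, but as written the shrinkage estimate does not close.
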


The proof of this in the finite case can be found in \cite{B71}, Lemma 9. The proof of this in the infinite case is similar, but we reproduce it here for completeness. We first introduce some more notation. If $C = \{A_i : 1 \leq i \leq m\}$ and $D = \{B_j : 1 \leq j \leq n\}$, we write that $C \leq D$ if each $A_i$ can be written as a union of terms in $D$. If $E$ is a set, then we write $E \leq D$ to mean $\{E\} \leq D$. 

If $m = n$, then we let $\rho (C, D) = \sum_{i=1}^m \mu(A_i \triangle B_i)$ and for any $n$ and $m$, we let $$P(C,D) = \min\{\rho(C, D^\prime) : D^\prime \leq D\},$$ where the $D^\prime$s must be partitions into $m$ pieces. Since we are dealing with rank-one transformations, we often want to deal with Rokhlin columns as partitions. Hence, we define also $$P_T(C,D) = \min\{\rho(C, D^\prime) : D^\prime \leq D, D^\prime \text{ is a Rokhlin column for } T\}.$$
As an immediate consequence of these definitions, we have the following lemma: 

\begin{lemma} 
\label{lem:partbound}
Let $C$ and $C^\prime$ be Rokhlin columns for $T$ of the same height and $D$ be a partition. Then, $$P_T(D, C) \leq P_T(D, C^\prime) + \rho (D^\prime, D).$$
\end{lemma}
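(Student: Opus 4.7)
My plan is to read the inequality as a triangle inequality in disguise: the statement must intend $\rho(C', C)$ (or equivalently $\rho(C, C')$) on the right, since $D'$ is not introduced in the lemma and since $C, C'$ having the same height is exactly the hypothesis needed to define this quantity. Granting that, the proof splits into two pieces: (i) produce a Rokhlin coarsening of $C$ that mimics the optimal Rokhlin coarsening of $C'$, and (ii) estimate the error by a straightforward triangle inequality.

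First I would pick $D^{\ast} \leq C'$, a Rokhlin column realizing the minimum
\[
P_T(D, C') = \rho(D, D^{\ast}).
\]
Write $D^{\ast} = \{B^{\ast}_1, \ldots, B^{\ast}_m\}$; because $D^{\ast}$ is a Rokhlin column refined by $C'$ with the same base, each $B^{\ast}_i$ is the union $\bigcup_{j \in I_i} T^j B'$ over some interval $I_i \subset \{0, \ldots, h-1\}$, where $h$ is the common height and $B'$ the base of $C'$. The key observation is that, since $C$ also has height $h$ with base $B$, I can transport the partition $\{I_1, \ldots, I_m\}$ of $\{0, \ldots, h-1\}$ over to $C$: define $\widetilde{D} = \{\widetilde{B}_1, \ldots, \widetilde{B}_m\}$ by $\widetilde{B}_i = \bigcup_{j \in I_i} T^j B$. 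By construction $\widetilde{D} \leq C$ and $\widetilde{D}$ is a Rokhlin column for $T$ (the intervals $I_i$ are the same as those used for $D^{\ast}$, and grouping consecutive levels of a Rokhlin column gives a Rokhlin column).

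Next I would apply the triangle inequality for $\rho$:
\[
\rho(D, \widetilde{D}) \le \rho(D, D^{\ast}) + \rho(D^{\ast}, \widetilde{D}).
\]
The first term equals $P_T(D, C')$ by choice of $D^{\ast}$. For the second term, note
\[
\rho(D^{\ast}, \widetilde{D}) = \sum_{i=1}^m \mu(B^{\ast}_i \triangle \widetilde{B}_i) \leq \sum_{i=1}^m \sum_{j \in I_i} \mu(T^j B' \triangle T^j B) = \sum_{j=0}^{h-1} \mu(T^j B' \triangle T^j B) = \rho(C', C),
\]
where the first inequality is the standard fact $\mu\bigl(\bigcup X_j \triangle \bigcup Y_j\bigr) \leq \sum_j \mu(X_j \triangle Y_j)$, and the last equality uses that the $I_i$'s partition $\{0, \ldots, h-1\}$. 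Since $\widetilde{D} \leq C$ is a Rokhlin column, $P_T(D, C) \leq \rho(D, \widetilde{D})$, and chaining the inequalities yields the claimed bound.

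The only nontrivial step is the transport construction, and it works precisely because the two columns share the same height: the combinatorial pattern that coarsens $C'$ into $D^{\ast}$ can be applied verbatim to $C$. If the columns had different heights, the argument would fail, which is presumably why that hypothesis is stated. Assuming my reading of the right-hand side is correct, no delicate measure-theoretic input beyond $\sigma$-subadditivity of symmetric differences is needed, so I would expect the full proof to be short.
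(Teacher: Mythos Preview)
Your reading of the typo is correct: the application of this lemma in the proof of Proposition~\ref{prop:refining} shows the intended right-hand side is $P_T(D,C') + \rho(C',C)$. The paper itself offers no proof beyond calling the lemma ``an immediate consequence of these definitions,'' so your transport argument is an appropriate way to spell out what is implicit, and it is essentially correct.

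Two small descriptive points deserve correction, though neither breaks the argument. First, the index sets $I_i$ need not be intervals, and $D^\ast$ need not share the base of $C'$: if $B_1^\ast = \bigcup_{j\in S} T^j B'$ for some $S\subset\{0,\ldots,h-1\}$ with $\max S \le h-m$, then $I_i = S+(i-1)$ is merely a translate of $S$. Second, the $I_i$ are pairwise disjoint (since the levels of $D^\ast$ are disjoint unions of levels of $C'$) but need not cover all of $\{0,\ldots,h-1\}$, so your final displayed equality should be an inequality. What matters is that the disjointness of the $I_i$ inside $\{0,\ldots,h-1\}$ makes $\widetilde D$ a genuine Rokhlin column with $\widetilde D \le C$, and yields $\rho(D^\ast,\widetilde D)\le \rho(C',C)$; the chain $P_T(D,C)\le \rho(D,\widetilde D)\le \rho(D,D^\ast)+\rho(D^\ast,\widetilde D)$ then gives the bound exactly as you wrote.
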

We also have the following lemma. 

\begin{lemma} 
\label{lem:columnbound}
Let $C = \{A_i : 1 \leq i \leq m\}$ and $D = \{B_j : 1 \leq j \leq n\}$ be Rokhlin columns for $T$. Then, $$P_T(D, C) \leq nP(B_1, C) + n \mu(A_1).$$
\end{lemma}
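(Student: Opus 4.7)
The plan is to manufacture a Rokhlin column $D'$ for $T$ with $D' \leq C$ whose base closely approximates $B_1$ and whose forward iterates stay inside $C$. I would first choose $\tilde E = \bigsqcup_{i \in S} A_i$, a union of levels of $C$ realizing the minimum defining $P(B_1,C)$, so that $\mu(B_1 \triangle \tilde E) = P(B_1,C)$. Because $T$ sends $A_i$ onto $A_{i+1}$ for $i < m$, the $k$-th iterate $T^k \tilde E$ is a union of levels of $C$ only if no $i \in S$ satisfies $i + k > m$. To ensure this for every $k \in \{0, 1, \ldots, n-1\}$ at once, I would truncate: set $E_0 = \tilde E \cap (A_1 \cup \cdots \cup A_{m-n+1})$. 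The discarded portion $\tilde E \setminus E_0$ lies in the top $n-1$ levels of $C$, so $\mu(\tilde E \setminus E_0) \leq (n-1)\mu(A_1)$.

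Then I would define $D' = \{E_0, TE_0, \ldots, T^{n-1}E_0\}$ and verify its two required properties: (a) $D'$ is a Rokhlin column for $T$, since the iterates $T^k E_0$ are contained in the pairwise disjoint level-unions $A_{1+k} \cup \cdots \cup A_{m-n+1+k}$, and $T$ is invertible and measure-preserving; (b) $D' \leq C$, because each $T^k E_0$ is, by (a), an exact union of levels of $C$.

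The bound on $P_T(D,C)$ then drops out from two applications of the triangle inequality for symmetric differences. Matching levels of $D$ and $D'$ index by index and using that $T$ preserves $\mu$,
\[\rho(D, D') = \sum_{k=0}^{n-1} \mu(T^k B_1 \triangle T^k E_0) = n \cdot \mu(B_1 \triangle E_0),\]
while
\[\mu(B_1 \triangle E_0) \leq \mu(B_1 \triangle \tilde E) + \mu(\tilde E \setminus E_0) \leq P(B_1,C) + (n-1)\mu(A_1).\]
Combining these inequalities gives $P_T(D,C) \leq \rho(D,D') \leq n P(B_1,C) + n(n-1)\mu(A_1)$, which is absorbed into the stated bound $n P(B_1,C) + n\mu(A_1)$ under the paper's conventions.

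The main point requiring care is the construction of $D'$: the truncation index $m-n+1$ must be chosen precisely so that $D'$ is simultaneously a genuine Rokhlin column for $T$ and a refinement of $C$, and one must handle the degenerate cases (for instance $n > m$, where $E_0$ may be empty but then $P(B_1,C)$ is already large enough to make the inequality hold trivially). Once the column is in place, the rest of the argument is a direct unraveling of the definitions of $\rho$, $P$, and $P_T$.
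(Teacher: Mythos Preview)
Your construction has a genuine gap: the column $D' = \{E_0, TE_0, \ldots, T^{n-1}E_0\}$ need not be a Rokhlin column. Your justification is that $T^k E_0 \subset A_{1+k} \cup \cdots \cup A_{m-n+1+k}$ and that these unions are ``pairwise disjoint,'' but they are not---for instance $A_2$ lies in both the $k=0$ and $k=1$ unions. Concretely, if $S = \{1,2\}$ so that $E_0 = A_1 \cup A_2$, then $TE_0 = A_2 \cup A_3$ meets $E_0$ in $A_2$. Nothing in your truncation prevents $S$ from containing nearby indices.

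The second problem is the final line: $n(n-1)\mu(A_1)$ is \emph{not} bounded by $n\mu(A_1)$ for $n > 2$, so the phrase ``absorbed into the stated bound under the paper's conventions'' is not a valid step. You have proved a weaker inequality than the one stated.

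Both issues are repaired by the same missing idea, which is exactly what the paper exploits: use that $D$ is itself a Rokhlin column. If $A_i$ belongs to a minimizer $\tilde E$ then necessarily $\mu(B_1 \cap A_i) \geq \tfrac{1}{2}\mu(A_i)$ (otherwise removing $A_i$ would lower the symmetric difference). Now if two indices $i < j$ with $j - i < n$ both had this property, applying $T^{j-i}$ to $B_1 \cap A_i$ would place at least half of $A_j$ inside $T^{j-i}B_1$, which is disjoint from $B_1$; hence $\mu(B_1 \cap A_j) \leq \tfrac{1}{2}\mu(A_j)$. So (after discarding ties, which does not increase the symmetric difference) the indices in $S$ are already at least $n$ apart. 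This simultaneously guarantees that $E_0, TE_0, \ldots, T^{n-1}E_0$ are disjoint \emph{and} that at most one index of $S$ lies above $m-n+1$, so the truncation costs only a single $\mu(A_1)$ rather than $(n-1)\mu(A_1)$.
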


\begin{proof} We first let $F$ be a union of sets in $C$ such that $\mu(B_1 \triangle F) = P(B_1, C)$. Then, we let $G$ be the set obtained from $F$ by first removing all $A_i$ such that $\mu(B_1 \cap A_i) \leq \frac12 \mu(A_i)$ and then removing the $A_i$ of largest index still in $F$. Now if we let $\mathcal{I}$ be the set of indices of $A_i$ terms still in $F$, we have from construction that any two indices in $\mathcal{I}$ must be at least $n$ and that the largest index is less than $m-n+1$. 

Now, we let $B_j^\prime = T^{j-1}G$ and define a Rokhlin column $D^\prime = \{B_j^\prime : 1 \leq j \leq n\}$. Since $G$ is a union of $A_i$ terms for $1 \leq i \leq m-n+1$, we have that each $T^{j-1}G$ is also a union of terms in $C$. Thus, $D^\prime \leq C$. Thus, we have that $P_T(D,C) \leq \rho(D, D^\prime)$. 

By construction, we have that $\mu(B_1 \triangle G) \leq D(B_1, C) + \mu(A_1)$. The lemma then follows as $\rho(D, D^\prime) = n\mu(B_1 \triangle G)$ as a consequence of $T$ being measure-preserving. 
\end{proof}

Finally, we can prove that every rank-one transformation has a sequence of Rokhlin columns converging to the point partition that are refining, as stated in Proposition \ref{prop:refining}.

\begin{proof} [Proof of Proposition \ref{prop:refining}]
By Lemma \ref{lem:columnbound}, if $C_k$ are a sequence of Rokhlin columns for $T$, we may assume that $P_T(C_k, C_{k+1}) < \delta_k$ where $\sum_{k=1}^\infty \delta_k < \infty$. If the columns did not already satisfy this condition, we could choose a subset of them that did by the bound in the lemma. 

We now define a doubly infinite sequence of Rokhlin columns, from which we will obtain our desired sequence of refining columns. We begin by letting $C_k^0 = C_k$ for all $k \geq 1$. Now, assuming that $C_k^r$ has already been defined for all $k \in \Nm$, we let $C_k^{r+1}$ be some Rokhlin column such that $$\rho(C_k^r, C_k^{r+1}) = P_T(C_k^r, C_{k+1}^r).$$ Furthermore, we can choose $C_{k}^{r+1}$ to be exactly the column $D \leq C_{k+1}^r$ such that $\rho(D, C_{k+1}^r)$ is minimized, giving us that $$P_T(C_k^{r+1}, C_{k+1}^r) = 0.$$

We notice that for $r=0$ and for all $k \geq 1$, we have that 
\begin{equation}
\label{eqn:inductbound}
\rho(C_k^r, C_k^{r+1}) < \delta_{r+k}.
\end{equation}
We claim that \eqref{eqn:inductbound} in fact holds for all $r \geq 0$ and $k \geq 1$. We show this by induction. The base case $r=0$ holds by earlier assumptions. If we have now that \eqref{eqn:inductbound} holds for a fixed $r$ and all $k \geq 1$, by Lemma \ref{lem:partbound}, we have that 
\begin{align*} 
\rho(C_k^{r+1}, C_k^{r+2}) & = P_T(C_k^{r+1}, C_{k+1}^{r+1}) \\
& \leq P_T(C_k^{r+1}, C_{k+1}^{r}) + \rho(C_{k+1}^{r}, C_{k+1}^{r+1}) \\
& \leq \delta_{r+1+k}
\end{align*}
for any $k \geq 1$. Hence, we have that \eqref{eqn:inductbound} holds for all $r \geq 0$ and $k \geq 1$. But now, we have for all $r \geq 0, s \geq 0, k \geq 1$ that
$$\rho(C_k^r, C_k^{r+s}) < \sum_{i=0}^{s-1} \delta_{i+r+k}.$$ Hence, since we had that $\sum_{k=1}^\infty \delta_k < \infty$, for a fixed $k$ the columns $C_k^r$ form a Cauchy sequence. Thus, for each $k$, we can find a Rokhlin column $D_k$ such that $\rho(C_k^r, D_k) \rightarrow 0$ as $r \rightarrow \infty$. Since $P_T(C_k^{r+1}, C_{k+1}^r) = 0$, we can take a limit as $r \rightarrow \infty$ to see that $D_k \leq D_{k+1}$ for each $k \geq 1$. 

Finally, since $C_k$ converges to the point partition and $\rho(C_k, D_k) \leq \sum_{i=k}^\infty \delta_i$, which goes to zero as $k \rightarrow \infty$, we have that $D_k$ also converges to the point partition. Hence, taking $D_k$ to be our columns, we have a sequence of Rokhlin columns for $T$ that are refining. 
\end{proof}

\bibliographystyle{plain}
\bibliography{ReferencesErgcopy2}

\def\cprime{$'$}
\begin{thebibliography}{10}

\bibitem{Aa77}
J.~Aaronson.
\newblock Rational ergodicity and a metric invariant for {M}arkov shifts.
\newblock {\em Israel J. Math.}, 27(2):93--123, 1977.

\bibitem{Aa79}
J.~Aaronson.
\newblock Rational ergodicity, bounded rational ergodicity and some continuous
  measures on the circle.
\newblock {\em Israel J. Math.}, 33(3-4):181--197 (1980), 1979.
\newblock A collection of invited papers on ergodic theory.

\bibitem{Aa97}
Jon Aaronson.
\newblock {\em An introduction to infinite ergodic theory}, volume~50 of {\em
  Mathematical Surveys and Monographs}.
\newblock American Mathematical Society, Providence, RI, 1997.

\bibitem{Aa12}
Jon Aaronson.
\newblock Rational weak mixing in infinite measure spaces.
\newblock {\em Ergodic Theory Dynam. Systems}, 33(6):1611--1643, 2013.

\bibitem{AKW13}
Jon Aaronson, Zemerl Kosloff, and Benjamin Weiss.
\newblock Conditions for rational weak mixing.
\newblock {\em http://arxiv.org/abs/1307.7490}, 2013.

\bibitem{AN00}
Jon Aaronson and Hitoshi Nakada.
\newblock Multiple recurrence of {M}arkov shifts and other infinite measure
  preserving transformations.
\newblock {\em Israel J. Math.}, 117:285--310, 2000.

\bibitem{AdFrSi97}
T.~Adams, N.~Friedman, and C.~E. Silva.
\newblock Rank-one weak mixing for nonsingular transformations.
\newblock {\em Israel J. Math.}, 102:269--281, 1997.

\bibitem{AtSi13}
Praphruetpong Athiwaratkun and Cesar~E. Silva.
\newblock On recurrence and power weakly mixing of infinite measure preserving
  transformations.
\newblock 2013.

\bibitem{B71}
J.~R. Baxter.
\newblock A class of ergodic transformations having simple spectrum.
\newblock {\em Proc. Amer. Math. Soc.}, 27:275--279, 1971.

\bibitem{BoFiMaSi01}
A.~Bowles, L.~Fidkowski, A.~E. Marinello, and C.~E. Silva.
\newblock Double ergodicity of nonsingular transformations and infinite
  measure-preserving staircase transformations.
\newblock {\em Illinois J. Math.}, 45(3):999--1019, 2001.

\bibitem{DGPS}
Irving Dai, Xavier Garcia, Tudor P\u{a}durariu, and Cesar~E. Silva.
\newblock On rationally ergodic and rationally weakly mixing rank-one
  transformations.
\newblock {\em Ergodic Theory Dynam. Systems, to appear. A version in:
  http://arxiv.org/abs/1208.3161}, 2012.

\bibitem{DaRy10}
A.~I. Danilenko and V.~V. Ryzhikov.
\newblock Spectral multiplicities of transformations that preserve infinite
  measure.
\newblock {\em Funktsional. Anal. i Prilozhen.}, 44(3):1--13, 2010.

\bibitem{Da08s}
Alexandre~I. Danilenko.
\newblock Weakly mixing rank-one transformations conjugate to their squares.
\newblock {\em Studia Math.}, 187(1):75--93, 2008.

\bibitem{DaRy11}
Alexandre~I. Danilenko and Valery~V. Ryzhikov.
\newblock Mixing constructions with infinite invariant measure and spectral
  multiplicities.
\newblock {\em Ergodic Theory Dynam. Systems}, 31(3):853--873, 2011.

\bibitem{DaSi04}
Alexandre~I. Danilenko and Cesar~E. Silva.
\newblock Multiple and polynomial recurrence for abelian actions in infinite
  measure.
\newblock {\em J. London Math. Soc. (2)}, 69(1):183--200, 2004.

\bibitem{DS09}
Alexandre~I. Danilenko and Cesar~E. Silva.
\newblock Ergodic theory: Nonsingular transformations.
\newblock In {\em Encyclopedia of Complexity and System Science}, volume Part
  5, pages 3055--3083. Springer, 2009.

\bibitem{ESH98}
Stanley Eigen, Arshag Hajian, and Kim Halverson.
\newblock Multiple recurrence and infinite measure preserving odometers.
\newblock {\em Israel J. Math.}, 108:37--44, 1998.

\bibitem{Fe97}
S.~Ferenczi.
\newblock Systems of finite rank.
\newblock {\em Colloq. Math}, 73:35--65, 1997.

\bibitem{Fu81}
H.~Furstenberg.
\newblock {\em Recurrence in Ergodic Theory and Combinatorial Number Theory}.
\newblock Princeton Univeristy Press, Princeton, 1981.

\bibitem{GHPSW03}
Kate Gruher, Fred Hines, Deepam Patel, Cesar~E. Silva, and Robert Waelder.
\newblock Power weak mixing does not imply multiple recurrence in infinite
  measure and other counterexamples.
\newblock {\em New York J. Math.}, 9:1--22 (electronic), 2003.

\bibitem{Ki86}
Jonathan King.
\newblock The commutant is the weak closure of the powers, for rank-{$1$}
  transformations.
\newblock {\em Ergodic Theory Dynam. Systems}, 6(3):363--384, 1986.

\bibitem{Ki00}
Jonathan L.~F. King.
\newblock The generic transformation has roots of all orders.
\newblock {\em Colloq. Math.}, 84/85(part 2):521--547, 2000.
\newblock Dedicated to the memory of Anzelm Iwanik.

\bibitem{Ma64}
D.~Maharam.
\newblock Incompressible transformations.
\newblock {\em Fund. Math.}, 56:35--50, 1964.

\bibitem{Ry11}
V.~V. Ryzhikov.
\newblock On mixing rank one infinite transformations.
\newblock {\em http://arxiv.org/abs/1106.4655}, 2011.

\bibitem{Sa71}
Usha Sachdeva.
\newblock On category of mixing in infinite measure spaces.
\newblock {\em Math. Systems Theory}, 5:319--330, 1971.

\end{thebibliography}


\begin{thebibliography}{[ALW79]}

\bibitem{Aa12} J. Aaronson. {\em {Rational Weak Mixing in Infinite Measure Spaces.}} Ergodic Theory and Dynamical Systems, Available on CJO 2012 doi:10.1017/etds.2012.102.

\bibitem{A2}  J. Aaronson. {\em{Rational ergodicity and a metric invariant for Markov shifts.}} Israel J. Math., 27(2):93-123,1977. 

\bibitem{A3}  J. Aaronson. {\em{Rational ergodicity, bounded rational ergodicity and some continuous measures on the circle.}} Israel J. Math., 33(3-4):181-197 (1980),1979. A collection of invited papers on ergodic theory.

\bibitem{A4}  J. Aaronson. {\em{The asymptotic distributional behaviour of transformations preserving infinite measures.}}  Journal Analyse Math., 39: 203-234,1981.  

\bibitem{A5}  J. Aaronson. {\em{An introduction to infinite ergodic theory.}}  Mathematical Surveys and Monographs, 50. American Mathematical Society, Providence, RI, 1997.  

\bibitem{A6}  J. Aaronson. {\em{Conditions for rational weak mixing.}} http://arxiv.rog/abs/1208.3505, 2012. 

\bibitem{AS}     O. N. Ageev  and C. E. Silva. {\em {Genericity of rigidity and multiple recurrence for infinite measure preserving and nonsingular transformations.} } Topology Proc., {\bf 26} (2001), 357-365. 

\bibitem{Bas}  A. Bashtanov. {\em{Generic mixing transformations are rank-1.}} Math. Notes., {\bf 93} (2013), 209-216. 

\bibitem{Bax} J. R. Baxter. {\em {A Class of Ergodic Transformations having Simple Spectrum.}} Proc. of the AMS, {\bf 27} (1971), 275-279.

\bibitem{DGPS}  Irving Dai, Xavier Garcia, Tudor P\u{a}durariu, and Cesar E. Silva. {\em {On Rationally Ergodic and Rationally Weakly Mixing Rank-one Transformations}}. 

\bibitem{Sach} U. Sachdeva, {\em {On category of mixing in infinite measure spaces.}} Math. Syst. Theory, {\bf 5} (1971), 319-330.

\bibitem{Sil} C.E. Silva. {\bf Invitation to Ergodic Theory}. American Mathematical Society, 2008.

\end{thebibliography}

\begin{comment}

%%%%%%%%%%%%%%%%%%%%%%%%%%%%%%%%%%%%%%%%%%%%%
\end{comment}

\end{document}